\newcounter{relctr} 
\everydisplay\expandafter{\the\everydisplay\setcounter{relctr}{0}} 
\setlist[1]{wide}
\setlist[2]{leftmargin=15mm}
\setlist[enumerate]{label=\rm{(\roman*)}}
\setlist[itemize]{label=\raisebox{0.25ex}{\tiny$\bullet$}}
\numberwithin{equation}{section}
\newtheorem{theorem}{Theorem}[section]
\newtheorem{proposition}[theorem]{Proposition}
\newtheorem{lemma}[theorem]{Lemma}
\theoremstyle{definition}
\newtheorem{remark}[theorem]{Remark}
\newtheorem{definition}[theorem]{Definition}
\newtheoremstyle{customNumber}
     {}          
     {}          
     {\itshape}  
     {}          
     {\bfseries} 
     {.}         
     { }         
     {\thmname{#1}\thmnumber{ #2}\thmnote{ #3}}
\theoremstyle{customNumber}
\renewcommand{\phi}{\varphi}
\newcommand\C{{\mathbb C}}
\newcommand\GL{{\mathrm{GL}}}
\newcommand\Pic{{\mathrm{Pic}}}
\newcommand\PP{{\mathbb P}}
\newcommand\R{{\mathbb R}}
\newcommand\A{{\mathbb A}}
\newcommand\Z{{\mathbb Z}}
\newcommand{\F}{\mathbb{F}}
\newcommand{\Aut}{\mathrm{Aut}}
\newcommand{\Bir}{\mathrm{Bir}}
\newcommand{\id}{\mathrm{id}}
\newcommand{\Base}{\mathrm{Base}}
\newcommand{\charpoly}{\mathrm{char}}
\title[The ordinal of dynamical degrees of birational maps]
{The ordinal of dynamical degrees \\ of birational maps of the projective plane}
\author{Anna Bot}
\address{}
\subjclass[2020]{37F10, 32H50, 14E07, 14E05}
\keywords{Dynamical degree, rational projective surfaces, ordinals \\ The author acknowledges support by the Swiss National Science Foundation Grant ``Geometrically ruled surfaces'' 200020-192217.}
\begin{document}

\begin{abstract}
	We show that the ordinal of the dynamical degrees of all complex birational maps of the projective plane is $\omega^{\omega}$.
\end{abstract}

\thanks{}
\maketitle


\section{Introduction} \label{section: introduction}

Fix a smooth projective surface $X$ over a field $\mathbf{k}$. Any birational self-map $f$ of $X$ can be analysed by considering the dynamical system its iterates provide. A way to measure how chaotic such a system might become is by considering the dynamical degree of $f\colon X\dashrightarrow X$, which is given by
\begin{align*}
	\lambda(f)=\lim_{n \rightarrow \infty} (D\cdot(f^n)_\ast D)^{1/n},
\end{align*}
where $D \subset X$ is any ample divisor and $D\cdot C$ denotes the intersection form. If $X$ is the projective plane, the definition agrees with $\lambda(f) = \lim_{n \rightarrow \infty} (\deg(f^n))^{1/n}$. 

The dynamical degree has been considered in many different contexts, often due to its close connection with entropy, see for example  \cite{gromov1987entropy}, \cite{yomdin1987volume}, \cite{MR1704282} \cite{cantat2001dynamique}, \cite{MR1867314}, \cite{gromov2003entropy}, \cite{bedford2006periodicities}, \cite{MR2354205}, \cite{bedford2009dynamics}, \cite{bedford2010continuous}, \cite{MR2825269}, \cite{deserti2011automorphisms}, \cite{junyi2015periodic}, \cite{mcmullen2016automorphisms}, \cite{MR3454379}, \cite{MR3667901}, \cite{jonsson2018complex}, \cite{MR4292865}. By considering all dynamical degrees of all birational maps of $X$, we can gauge how differently dynamical systems on $X$ might behave. Due to any dynamical degree on a rational projective surface being an algebraic integer $\geq 1$ (see for example the results of Diller and Favre~\cite[Theorem~$5.1$]{MR1867314}), the set $\Lambda(\Bir_{\mathbf{k}}(X)) \subset \R$ of all dynamical degrees possible for elements of $\Bir_{\mathbf{k}}(X)$ is at most countable. Yet, this is a quantitative statement, since it takes into consideration only the cardinality of $\Lambda(\Bir_{\mathbf{k}}(X))$; using the theory of ordinal numbers instead, we are able to make a qualitative statement about accumulation points of $\Lambda(\Bir_{\mathbf{k}}(\PP^2_{\mathbf{k}}))$.

Recall that a well ordered set is a totally ordered set such that every non-empty subset contains a smallest element. Any such well ordered set is up to order-preserving bijection equal to an ordinal. The ordinals of finite cardinality are in bijection with the natural numbers, and we call $\omega$ the first infinite ordinal. Using ordinals gives us the possibility of putting elements of $\Lambda(\Bir_{\mathbf{k}}(X))$ into relation with one another and describe different flavours of infinity that the sets $\Lambda(\Bir_{\mathbf{k}}(X))$ of the same cardinality can have.

Using an argument by Xie~\cite{junyi2015periodic}, Blanc and Cantat showed in~\cite[Theorem~$7.2$]{MR3454379} that the set of all dynamical degrees of all projective surfaces is well ordered with respect to the standard order on $\R$. Yet, by Blanc and Cantat~\cite[Theorem~B]{MR3454379}, the union of all $\Lambda(\Bir_{\mathbf{k}}(X))$ where $X$ is geometrically non-rational and where $\mathbf{k}$ is arbitrary is a discrete closed subset of $\R$ and thus the ordinal is at most $\omega$. So the question boils down to: for a given geometrically rational projective surface $X$, what is the ordinal of $\Lambda(\Bir_{\mathbf{k}}(X))$? We show that it is $\omega^\omega$ if $\mathbf{k}=\C$. More generally, we have: 

\begin{theorem}\label{thm: main theorem}
	For any field $\mathbf{k}$ and any projective geometrically rational surface $X$ defined over $\mathbf{k}$, the order type of $\Lambda(\Bir_{\mathbf{k}}(X)) \subset \R$ is bounded above by $\omega^\omega$. If, in addition, $\mathbf{k}$ contains the real algebraic numbers and $X$ is rational over $\mathbf{k}$, then $\Lambda(\Bir_{\mathbf{k}}(X))=\omega^\omega$.
\end{theorem}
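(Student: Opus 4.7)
The plan is to handle the upper bound (valid for any field and any geometrically rational surface) and the lower bound (valid under the extra hypotheses) separately, both by means of a complexity stratification of $\Lambda(\Bir_{\mathbf{k}}(X))$.

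For the upper bound, I would first reduce to studying spectral radii of linear maps on finite dimensional lattices: given a birational self-map $f$ of $X$, invoke Diller--Favre to find a smooth projective surface $Y$ birational to $X$ and an algebraically stable model $g\colon Y \dashrightarrow Y$ of $f$, so that $\lambda(f)$ is the spectral radius of $g^*$ on $N^1(Y)_{\R}$. Define the filtration
\begin{equation*}
\Lambda_n \;=\; \bigl\{\mu \in \Lambda(\Bir_{\mathbf{k}}(X)) : \mu \text{ is realized with } \rho(Y)\le n\bigr\},
\end{equation*}
so that $\Lambda = \bigcup_{n\ge 1}\Lambda_n$. Each element of $\Lambda_n$ is an algebraic integer of degree at most $n$, being an eigenvalue of an integer matrix of size $\le n$ that preserves a symmetric bilinear form of signature $(1,n-1)$; in particular it is a root of unity, or Pisot, or Salem, or Perron of a very constrained kind. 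I would then prove by induction on $n$ that the order type of $\Lambda_n$ is at most $\omega^n$, the crucial step being that every accumulation point of $\Lambda_n$ already lies in $\Lambda_{n-1}$. This rests on a compactness argument: any convergent sequence of characteristic polynomials of bounded degree and coefficient size has, up to extracting a subsequence, a limit whose dominant eigenvalue coincides with the limit and whose realization has strictly smaller effective rank. Since $\omega^\omega = \sup_n \omega^n$, this gives the desired bound.

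For the lower bound, the task is to exhibit birational maps whose dynamical degrees realize a subset of $\Lambda$ of order type $\omega^\omega$. The natural strategy is to build automorphisms of rational surfaces obtained by blowing up $\PP^2$ at carefully chosen configurations of points; when $n$ points are blown up, the Picard lattice is hyperbolic of rank $n+1$, and one realizes isometries whose characteristic polynomial is of Salem or Pisot type with prescribed leading eigenvalue. By varying the positions of the blown-up points continuously among the real algebraic points of a suitable parameter variety, one produces families of such automorphisms whose dynamical degrees accumulate at a fixed value. Iterating this deformation in higher codimension produces subsets $S_n \subset \Lambda$ with $\mathrm{ot}(S_n) = \omega^n$; their union then realizes order type $\omega^\omega$. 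The hypothesis that $\mathbf{k}$ contains the real algebraic numbers is what makes the approximating points available over $\mathbf{k}$, and rationality of $X$ over $\mathbf{k}$ guarantees that these surfaces are actually birational to $X$ over $\mathbf{k}$.

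The main obstacle I anticipate is the inductive accumulation statement in the upper bound, namely proving precisely that the Cantor--Bendixson derivative of $\Lambda_n$ is contained in $\Lambda_{n-1}$. One has to analyse how sequences of hyperbolic isometries with bounded integer entries can degenerate, and rule out pathological limits where the complexity fails to drop, showing instead that any accumulating dominant eigenvalue factors through an invariant sublattice of smaller rank. On the lower bound side, the delicate point is twofold: one must verify that the deformed surfaces really are birational to $X$ over $\mathbf{k}$ rather than only after base change, and that the parameter families can be chosen to produce sequences of dynamical degrees that are genuinely distinct, strictly monotone, and convergent to the prescribed Pisot accumulation point.
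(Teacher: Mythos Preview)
Your upper-bound strategy has two real gaps. First, each $\Lambda_n$ is already discrete in $\R$: an algebraically stable model on $Y$ with $\rho(Y)\le n$ gives an integer matrix of size at most $n$ whose eigenvalues, apart from the spectral radius, all lie in the closed unit disc (Diller--Favre); bounding the spectral radius therefore bounds the integer coefficients of the characteristic polynomial, so $\Lambda_n\cap[1,M]$ is finite for every $M$. Thus $\mathrm{ot}(\Lambda_n)\le\omega$ and your Cantor--Bendixson induction is vacuous---it tells you nothing about how the $\Lambda_n$ fit together. Second, and fatally, the inference ``$\mathrm{ot}(\Lambda_n)\le\omega^n$ for all $n$, hence $\mathrm{ot}\bigl(\bigcup_n\Lambda_n\bigr)\le\omega^\omega$'' is false in general: every countable well-ordered set is a countable union of finite sets, so that reasoning would bound every countable ordinal by $\omega^\omega$. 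The paper's upper bound proceeds quite differently: it stratifies by the degree $d$ of the map, uses that $\Bir_d(\PP^2)$ is an algebraic variety of dimension $4d+6$, and runs an induction on the dimension of locally closed subvarieties, the inductive step being powered by Xie's lower semicontinuity of $\lambda$ on $\Bir_d(\PP^2)$. This semicontinuity on a finite-dimensional parameter space is the geometric input your purely arithmetic matrix argument is missing.

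For the lower bound your outline is in the right direction but rests on a misconception. The dynamical degree of an automorphism of a blow-up of $\PP^2$ is determined by the action on the Picard lattice, hence by the combinatorics of how exceptional curves are permuted; varying the positions of the blown-up points continuously does \emph{not} change $\lambda$. What the paper varies is the discrete orbit data $\underline{n}=(n_2,\dots,n_{2d-1})$ of a de Jonqui\`eres map. An explicit computation of the characteristic polynomial shows that increasing any $n_i$ strictly increases the spectral radius, while letting the last parameter $n_m\to\infty$ makes it converge from below to the spectral radius attached to the shorter tuple $(n_2,\dots,n_{m-1})$. This monotone-with-limits structure, iterated $2d-2$ times, yields a subset of order type $\omega^{2d-2}$ for each $d\ge 4$, and the union over $d$ gives $\omega^\omega$. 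The hypothesis on $\mathbf{k}$ enters only at the realisation step: the base points are placed on a cuspidal cubic at coordinates that are explicit algebraic functions of the leading eigenvalue, which is a real algebraic number.
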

For any rational surface $X$, the set $\Lambda(\Bir_{\mathbf{k}}(X))$ agrees with $\Lambda(\Bir_{\mathbf{k}}(\PP^2_{\mathbf{k}}))$; we may thus restrict all statements proved in this article to $\PP^2_{\mathbf{k}}$. One can interpret the second statement of Theorem~\ref{thm: main theorem} as follows: there exists a sequence of birational transformations whose dynamical degrees each are limits of a sequence of dynamical degrees of other birational transformations. Now, each of these dynamical degrees is again the limit of a sequence, whose members are again limits of sequences, and so on. In a sense, this means that $\Lambda(\Bir_{\mathbf{k}}(\PP^2_{\mathbf{k}}))$ is a very dense countable set whenever $\mathbf{k}$ contains the real algebraic numbers and $X$ is rational over $\mathbf{k}$.

One can prove the lower bound under these assumptions in two ways, either using the Weyl group as in Section~\ref{section: bounding from below, Weyl group}, or by explicitly constructing rational surface automorphisms with the desired dynamical degrees as in Section~\ref{section: bounding from below, realisations}. The first strategy was pointed out to the author by Curtis T McMullen: recall that one can define the Weyl group~$W_n=W(E_n)$ the Coxeter group determined by the Coxeter-Dynkin diagram $E_n$, for which Nagata~\cite{MR126444} proves that the image of $\Aut(X) \rightarrow \Aut(\Pic(X))$ lies in $W_n$. Applying Theorem~$1.1$ of Uehara~\cite{MR3477879} shows that the Weyl spectrum~$\Lambda(W)$ consisting of all spectral radii of all elements of all $W_n$ with $n\geq 3$ is contained in $\Lambda(\Bir(\PP^2_{\C}))$. It is thus well ordered, but it is not equal to $\Lambda(\Bir(\PP^2_{\C}))$, since $\Lambda(W)$ is comprised of Salem numbers and of reciprocal quadratic integers, whereas $\Lambda(\Bir(\PP^2_{\C})) \setminus \Lambda(W)$ are Pisot numbers (see~\cite[$1.1.2$]{MR3454379} for an introduction to Salem and Pisot numbers, or Remark~\ref{rem: last remark}). Using Section~\ref{section: arithmetic properties of auxiliary polynomials}, we can prove:
\begin{theorem}\label{thm: main theorem for Weyl spectrum}
	The Weyl spectrum $\Lambda(W)$ has order type~$\omega^\omega$.
\end{theorem}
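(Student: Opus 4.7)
My plan is to establish the two bounds separately, using Theorem~\ref{thm: main theorem} and the strategy behind its proof as the main input.

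For the upper bound, Uehara's theorem cited in the introduction gives the inclusion $\Lambda(W) \subseteq \Lambda(\Bir_\C(\PP^2_\C))$. Applying Theorem~\ref{thm: main theorem} with $\mathbf{k}=\C$ and $X=\PP^2_\C$ (valid since $\C$ contains the real algebraic numbers and $\PP^2_\C$ is rational), the latter spectrum has order type exactly $\omega^\omega$. A well-ordered subset of a well-ordered set of order type $\omega^\omega$ has order type at most $\omega^\omega$, so $\Lambda(W)$ is bounded above by $\omega^\omega$.

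For the lower bound, I would re-examine the argument of Section~\ref{section: bounding from below, Weyl group}. That section, in order to prove the lower bound for Theorem~\ref{thm: main theorem} via the Weyl group route, already constructs a subset of $\Lambda(\Bir_\C(\PP^2_\C))$ of order type $\omega^\omega$ realised as spectral radii of elements of the groups $W_n$, $n\geq 3$. By definition of the Weyl spectrum, every such spectral radius is a member of $\Lambda(W)$. Hence the same construction, reinterpreted, exhibits a subset of $\Lambda(W)$ of order type $\omega^\omega$, completing the proof.

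The main obstacle, pushed back into Section~\ref{section: bounding from below, Weyl group} and Section~\ref{section: arithmetic properties of auxiliary polynomials}, is a genuine one: unlike the full dynamical spectrum, $\Lambda(W)$ contains only Salem numbers and reciprocal quadratic integers (as noted in the excerpt), and must therefore produce the required $\omega^\omega$ accumulation hierarchy \emph{without} recourse to Pisot numbers. The plan is, for each positive integer $N$, to exhibit a family of Coxeter-type products of reflections in $W_n$ (for $n$ depending on the parameters), indexed by $N$-tuples of positive integers, whose spectral radii are the dominant roots of the auxiliary Salem polynomials of Section~\ref{section: arithmetic properties of auxiliary polynomials} and whose successive limits as the innermost parameter tends to infinity realise an $N$-fold nested accumulation of order type $\omega^N$. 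Taking the union over $N$ then yields $\omega^\omega$. The technical crux is ensuring that each auxiliary polynomial is genuinely Salem (compatibly with the signature $(1,n-1)$ of the $W_n$-invariant form) and is realised by a true element of some $W_n$ rather than merely by an abstract isometry of the associated lattice.
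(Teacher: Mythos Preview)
Your proposal is correct and follows essentially the same approach as the paper: the upper bound via Uehara's inclusion $\Lambda(W)\subset\Lambda(\Bir_\C(\PP^2_\C))$ combined with the upper bound $\omega^\omega$ on the latter (which is Theorem~\ref{thm: bounding from above}, the content behind the upper bound in Theorem~\ref{thm: main theorem}), and the lower bound via the construction in Section~\ref{section: bounding from below, Weyl group}, where Lemma~\ref{lemma: ordinal Weyl spectrum} verifies that the matrices $J_d^{\underline{n}}$ genuinely lie in $W_n$ and hence $\bigsqcup_{d\geq 4}\Lambda_{d,2d-1}\subset\Lambda(W)$. One small clarification: the ``technical crux'' you identify is precisely the membership $J_d^{\underline{n}}\in W_n$, which the paper handles by an explicit factorisation into the standard generators; verifying that the polynomials are Salem is not logically needed for the order-type argument, since Proposition~\ref{prop: ordinal of helping sets} establishes the ordinal $\omega^{2d-2}$ of $\Lambda_{d,2d-1}$ directly from the monotonicity and limit behaviour of the roots, irrespective of whether the accumulation points (which are Pisot) belong to $\Lambda(W)$.
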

We can prove the above theorem without ever having to construct explicit birational maps (see Theorem~\ref{thm: main theorem, but with weyl group}). From this, the lower bound on $\Lambda(\Bir(\PP^2_{\C}))$ follows directly. Yet, as the proof of Uehara's strong result is relatively long, we also demonstrate the self-contained second version of the proof in Section~\ref{section: bounding from below, realisations}; in addition, Section~\ref{section: bounding from below, realisations} provides explicit realisations of de Jonqui\`eres maps.

For both strategies, we construct, for a fixed degree~$d$, suitable sequences of elements of the Weyl group which in the second strategy are realised as birational maps, of degree~$d$ whose spectral radii, which after the realisation equal the dynamical degrees, form a set of ordinal $\omega^{2d-2}$. Yet, in the open interval topology on $\R$, even though the set has ordinal $\omega^{2d-2}$, it does not contain any accumulation point. All the dynamical degrees we construct belong to a set $\Lambda_{d, 2d-1}$ consisting of specific birational maps which can be realised as automorphisms on some blow-up of $\PP^2_{\mathbf{k}}$, and are thus Salem numbers, yet their accumulation points are Pisot numbers. However, the accumulation points also belong to $\Lambda(\Bir_{\mathbf{k}}(\PP^2_{\mathbf{k}}))$ whenever $\mathbf{k}$ is uncountable and algebraically closed, and thus for $\mathbf{k}=\C$, since by Blanc and Cantat~\cite[Theorem~D]{MR3454379}, the set $\Lambda(\Bir_{\mathbf{k}}(\PP^2_{\mathbf{k}}))$ is closed under this assumption. Moreover, our construction does indeed produce these accumulation points: in fact, the closure of $\Lambda_{d, 2d-1}$ is equal to $\bigsqcup_{1 \leq m \leq 2d-1}\Lambda_{d, m}$, where the accumulation points of $\Lambda_{d, m}$ are precisely the elements of $\Lambda_{d, m-1}$. The set $\Lambda_{d, 1}$ comprises only  $\tfrac{1}{2}(d-1+\sqrt{d^2-2d+5})$, which is relatively easy to determine; however, already determining the dynamical degrees in the sequence constituting $\Lambda_{d, 2}$ converging to this value is impractical, since they are given as the largest roots of polynomials of increasing degrees. Nonetheless, the polynomials are given explicitly in this text.

This idea of considering the orbits of these base points has been developed in McMullen~\cite{MR2354205}, or in Diller~\cite{MR2825269} for birational maps of degree $2$. These orbits are then the independent parameters providing us with the sequences we are looking for. In fact, Diller~\cite{MR2825269} gives a complete description of the quadratic birational transformations whose base points lie on an irreducible cubic curve, so one could use that to give a complete description of all dynamical degrees of degree~$2$ maps. This stands in contrast to our result: we do construct suitable sequences of dynamical degrees, but there may be many more dynamical degrees which we do not catch in this way.

The article is structured in the following way: in Section~\ref{section: bounding from above}, the upper bound $\Lambda(\Bir_{\mathbf{k}}(\PP^2_{\mathbf{k}}))\leq \omega^\omega$ is shown. Then, in Section~\ref{section: arithmetic properties of auxiliary polynomials}, some arithmetic facts and tools are collected, which are then used in Sections~\ref{section: bounding from below, Weyl group} and~\ref{section: bounding from below, realisations} to give the bound $\omega^\omega \leq \Lambda(\Bir_{\mathbf{k}}(\PP^2_{\mathbf{k}}))$ for fields containing the real algebraic integers. What is somewhat hidden when phrasing the result using ordinals is that when given a suitable matrix $F$ in $\GL_{n+1}(\Z)$ and an eigenvalue $a\in \C \setminus\{1\}$, we give in Section~\ref{section: bounding from below, realisations}, under some further conditions, explicit points on a cuspidal cubic depending on $a$ such that their blow-up admits a birational map realising $F$, see Proposition~\ref{prop: eigenvector implies existence on cuspidal cubic}. The number $a$ being a real algebraic integer, we need our base field $\mathbf{k}$ to contain the real algebraic numbers.

For the rest of the paper, we omit the field $\mathbf{k}$ from the notation, and write $\Lambda(\Bir(X))$ instead of $\Lambda(\Bir_{\mathbf{k}}(X))$, and $\PP^2$ instead of $\PP^2_{\mathbf{k}}$.
~\\

\textbf{Acknowledgements.} I would like to thank Serge Cantat, St\'ephane Lamy and Konstantin Loginov for clarifying discussions, mostly, but not exclusively, in relation to ordinals. Furthermore, I am grateful to Curtis T McMullen for drawing my attention to how Uehara's article can shorten the proof of Theorem~\ref{thm: main theorem}, giving rise to Section~\ref{section: bounding from below, Weyl group}. Also, thank you to Gabriel Dill and again Stéphane Lamy for suggesting improvements on the first draft. As always, I thank my PhD advisor J\'er\'emy Blanc for suggesting the question and for the many helpful discussions.

\section{Bounding from above} \label{section: bounding from above}

Denote by $\Bir(\PP^2)$ the set of birational maps $f:\PP^2 \dashrightarrow \PP^2$, and by $\Bir_d(\PP^2)$ all birational maps of $\PP^2$ of degree~$d$, meaning that the map $f$ can be described by three homogeneous polynomials of degree~$d$ having no common divisor. Thus, $\Bir_d(\PP^2)$ is a subset of some projective space. One can see that it is locally closed, and therefore, $\Bir_d(\PP^2)$ can be seen as an algebraic variety and endowed with a Zariski topology, see also~\cite[Proposition~$2.15$]{MR3092478}. For any subset $S \subset \Bir(\PP^2)$, write $\Lambda(S)$ for the set of all dynamical degrees that elements of $S$ can attain. As the degree is greater or equal than $1$, the set~$\Lambda(S)$ is contained in $\R^{\geq 1}$.

Let $X$ be a topological space and $I \subset \R \cup \{-\infty, \infty\}$ a subset. A function $g \colon X \rightarrow I$ is called \emph{lower semicontinuous} if for any $r \in I$, the set $L(r) = \left\{ x \in X \mid g(x) \leq r \right\}$ is closed in $X$. The following result of Xie~\cite{junyi2015periodic} shows that the dynamical degree as a function $\lambda \colon \Bir_d(\PP^2) \rightarrow \R^{\geq 1}$ is lower-semicontinuous. 
\begin{theorem}[{\cite{junyi2015periodic}, Theorem~$1.5$}] \label{thm: junyi}
	Let $\mathbf{k}$ be an algebraically closed field and $d \geq 2$ be an integer. Then for any $\lambda < d$, the set $U_{\lambda}=\{f \in \Bir_d(\PP^2) \thinspace | \thinspace \lambda(f) > \lambda \}$ is a Zariski dense open set of $\Bir_d(\PP^2)$.
\end{theorem}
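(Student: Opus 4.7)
The plan is to prove the two halves of the statement separately: openness of $U_\lambda$ (equivalently, lower-semicontinuity of $f \mapsto \lambda(f)$ on $\Bir_d(\PP^2)$) and Zariski density of $U_\lambda$. The basic tools are the formula $\lambda(f) = \lim_n \deg(f^n)^{1/n}$ together with the submultiplicativity $\deg(f^{m+n}) \leq \deg(f^m)\deg(f^n)$, which by Fekete's lemma gives $\lambda(f) = \inf_n \deg(f^n)^{1/n}$; and the observation that for each fixed $n$, the function $f \mapsto \deg(f^n)$ is itself lower-semicontinuous on $\Bir_d(\PP^2)$, because cancellations in the polynomial formulas for $f^n$ occur only on a proper closed subvariety and make the degree drop.

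For openness, I would lift the action of $f$ to a blow-up $\pi \colon X \to \PP^2$ resolving the indeterminacies of the first $N$ iterates of $f$. By Diller-Favre one can choose such a model on which $f$ becomes algebraically stable up to order $N$; there the action $f^*$ on $\Pic(X)_\R$ is an integer matrix whose spectral-radius approximates $\lambda(f)$ as $N \to \infty$. Starting from $f_0$ with $\lambda(f_0) > \lambda$, I would choose $N$ so large that the truncated spectral quantity already exceeds $\lambda$ by a definite margin, and then argue that in a Zariski neighborhood of $f_0$ in $\Bir_d(\PP^2)$, the same blow-up $X$ continues to dominate the indeterminacies of $f_t, f_t^2, \ldots, f_t^N$, so that the matrices $f_t^*$ on the fixed lattice $\Pic(X)$ depend lower-semicontinuously on $t$; the spectral lower bound on $\lambda(f_t)$ then propagates to the neighborhood.

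For density, I would exhibit, in each irreducible component of $\Bir_d(\PP^2)$, a map $f$ with $\lambda(f) = d$: a generic de Jonqui\`eres map, or a generic composition of a linear transformation with the standard quadratic Cremona involution (for higher $d$, an appropriate monomial-type map), has $\deg(f^n) = d^n$ for every $n$ by absence of base-point cancellation, so $\lambda(f) = d > \lambda$. Combined with the openness established above, this forces $U_\lambda$ to be dense in every component.

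The hard part is the openness step, and the obstacle is structural: $\lambda$ is an infimum of a family of lower-semicontinuous functions, and an infimum of l.s.c.\ functions is only upper-semicontinuous in general, so the naive bound $\deg(f_t^n)^{1/n} \geq \deg(f_0^n)^{1/n}$ carries no information about $\lambda(f_t)$, the inequality going the wrong way relative to the infimum defining the dynamical degree. The escape, replacing the infimum formula by a spectral-radius computation on a stable model, is not without its own difficulty: the Diller-Favre model depends on $f$, and the blow-up made for $f_0$ may fail to be algebraically stable, or even fail to resolve all indeterminacies, for nearby $f_t$. Producing a single model $X$ that works for every $f_t$ in a full-dimensional Zariski neighborhood of $f_0$ and preserves enough of the stability property to transfer the resulting spectral inequality back into a genuine lower bound on $\lambda(f_t)$ is the real technical core of the theorem.
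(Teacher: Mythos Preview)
The paper does not prove this statement: Theorem~\ref{thm: junyi} is quoted verbatim from Xie's article \cite{junyi2015periodic} and used as a black box in the proof of Proposition~\ref{prop: order type for general variety with morphism to Bir}. There is therefore no ``paper's own proof'' to compare your proposal against.

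That said, your sketch is honest about its own weak point, and the diagnosis is correct: the formula $\lambda(f)=\inf_n \deg(f^n)^{1/n}$ combined with lower-semicontinuity of each $f\mapsto\deg(f^n)$ gives, a priori, only upper-semicontinuity of $\lambda$, which is the wrong direction. Your proposed fix --- passing to a Diller--Favre model for $f_0$ and hoping it remains adequate on a Zariski neighbourhood --- does not work as stated: algebraic stability is not an open condition in families, and the blow-up centres move with~$f_t$, so there is no single surface~$X$ on which all nearby $f_t$ act stably. Xie's actual argument does not attempt to build a uniform stable model; instead he controls, for each~$n$, the codimension of the locus in $\Bir_d(\PP^2)$ where $\deg(f^n)$ drops below a given threshold, and shows that these loci cannot accumulate to produce a jump of~$\lambda$ on a set of positive dimension. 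Your density paragraph is essentially fine (a generic map of degree~$d$ has $\lambda=d$), but the openness half, as you yourself flag, is not a proof.
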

A totally ordered set is called \emph{well ordered} if every subset has a minimum. It is a nice exercise to prove that, assuming the axiom of choice, well orderedness is equivalent to any decreasing sequence becoming stationary; it can also be found in Chapter~\MakeUppercase{\romannumeral 3},~§~$6.5$, Proposition~$6$ of \cite{MR2102219}. Recall that a well ordered set corresponds to a unique ordinal, that is its order type, and that we have a total order on ordinals. As usual, we denote by $\omega$ the smallest infinite ordinal, that is the order type of the natural numbers with the standard order.

The next proposition shows that $\Lambda(\Bir_d(\PP^2))$ is a well ordered set, a fact readily implied by $\Lambda(\Bir(\PP^2))$ being well ordered, which is proven in \cite{MR3454379}. Then, since $\Lambda(\Bir_d(\PP^2))$ is well ordered, we can describe its order type.

\begin{proposition} \label{prop: order type for general variety with morphism to Bir}
	Let $\mathbf{k}$ be an arbitrary field, $d \geq 1$ an integer and $X \subset \Bir_d(\PP^2)$ a locally closed subset. Then, $\Lambda(X)$ is a  well ordered subset of $\R$, whose ordinal is smaller or equal to $\omega^{\dim(X)}n+n$, where $n$ is the number of irreducible components of $X$.
\end{proposition}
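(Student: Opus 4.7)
The proof is by induction on $d = \dim X$. For the inductive step at dimension $d > 0$ with $n$ components, I would first reduce to the irreducible case ($n = 1$): decomposing $X$ into its $\mathbf{k}$-irreducible components $X_1, \ldots, X_n$, each $X_i$ is locally closed in $\Bir_d(\PP^2)$ of dimension at most $d$. Granted an irreducible bound $\text{ord}(\Lambda(X_i)) \leq \omega^{\dim X_i} + 1 \leq \omega^d + 1$, the general statement follows from the ordinal fact that for well ordered subsets $A, B \subset \R$ one has $\text{ord}(A \cup B) \leq \text{ord}(A) \oplus \text{ord}(B)$ (Hessenberg, i.e.\ natural, sum); the Hessenberg sum of $n$ copies of $\omega^d + 1$ evaluates to $\omega^d n + n$ in Cantor normal form. (The base case $d = 0$ is immediate, since then $|\Lambda(X)| \leq n$.)

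For the irreducible case with $\dim X = d > 0$, the plan is to show $\text{ord}(\Lambda(X)) \leq \omega^d + 1$ using the outer inductive hypothesis. By Xie's Theorem~\ref{thm: junyi}, the dynamical degree $\lambda \colon X \to \R^{\geq 1}$ is lower semicontinuous, so every sublevel set $L(c) = \{x \in X : \lambda(x) \leq c\}$ is Zariski closed in $X$. (Over non-algebraically closed $\mathbf{k}$ this persists by Galois descent, since $\lambda$ is Galois invariant.) For any $c < \sup \Lambda(X)$ there is some $f \in X$ with $\lambda(f) > c$, so $L(c) \subsetneq X$; since $X$ is irreducible this forces $\dim L(c) \leq d - 1$, and the outer inductive hypothesis applied to $L(c)$ with its finitely many components $n_c$ yields $\text{ord}(\Lambda(L(c))) \leq \omega^{d-1} n_c + n_c < \omega^d$. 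Hence every initial segment $\Lambda(X) \cap [1, c] \subset \Lambda(L(c))$ has ordinal strictly less than $\omega^d$.

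To finish, I would pick a sequence $c_k \nearrow \sup \Lambda(X)$ and write $\Lambda(X) \cap [1, \sup \Lambda(X)) = \bigcup_k (\Lambda(X) \cap [1, c_k])$ as an ascending union in which each piece is an initial segment of the union. For such an ascending chain of initial segments of a well ordered set, the ordinal of the union equals the supremum of the ordinals of the pieces, giving a bound of $\omega^d$. Adding $\sup \Lambda(X)$ back in when it is attained accounts for the extra $+1$, so $\text{ord}(\Lambda(X)) \leq \omega^d + 1$, as claimed.

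The main technical obstacle I anticipate is the ordinal-arithmetic observation that an ascending union of initial segments of a well ordered set has ordinal equal to the supremum of the pieces. This fails for general ascending unions of well ordered sets (for instance, appending a fixed maximum element to each $A_k$ raises the ordinal of the union by one), and it is essential that each $\Lambda(X) \cap [1, c_k]$ is an initial segment of $\Lambda(X) \cap [1, \sup \Lambda(X))$ rather than merely a subset. Beyond this the only geometric input is Xie's lower semicontinuity, supplemented by a routine descent argument since $\lambda$ is Galois invariant.
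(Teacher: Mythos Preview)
Your proposal is correct and follows essentially the same approach as the paper: induction on $\dim X$, reduction to the irreducible case, and using Xie's lower semicontinuity to show that every proper sublevel set $L(c)$ has strictly smaller dimension, hence ordinal $<\omega^{\dim X}$ by induction. The only cosmetic differences are that the paper phrases the irreducible step as a contradiction (if $\mathrm{ord}(\Lambda(X_i))>\omega^{\dim X}+1$, pick $\nu_1<\nu_2$ with the segment below $\nu_1$ of type $\omega^{\dim X}$, forcing $L(\nu_1)\cap X_i=X_i$) rather than your direct exhaustion-by-initial-segments argument, cites \cite{MR3454379} for well-orderedness rather than obtaining it inductively, and handles non-closed $\mathbf{k}$ by base change rather than Galois descent.
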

\begin{proof}
	Suppose that $\mathbf{k}$ is algebraically closed; if the result holds in that case, then for any field $\mathbf{k}$, we can use that any birational map defined over $\mathbf{k}$ is also defined over $\overline{\mathbf{k}}$, and deduce the claim. The set $\Lambda(\Bir(\PP^2))$ of all dynamical degrees of birational maps of $\PP^2$ is well ordered by \cite[Theorem $7.2$]{MR3454379}, and it contains $\Lambda(X)$. Thus, $\Lambda(X)$ must be well ordered, too. 
	
	We fix $d \geq 1$ and prove the second claim by induction on the dimension of $X$. If $\dim(X)=0$, then any irreducible component of $X$ is a point, and the claim follows. Suppose the claim is proved for any locally closed subvariety of $\Bir_d(\PP^2)$ of dimension $c-1 \geq 0$, and consider $X \subset \Bir_d(\PP^2)$ locally closed with $\dim(X)=c$. Decompose $X$ into its irreducible components $X_1\cup \ldots \cup X_n$ and pick $X_i$ with $\dim(X_i)=\dim(X)=c$; for the irreducible components of dimension smaller than $c$, the ordinal is smaller or equal to $\omega^{\dim(X)}+1$ by induction.
	
	Assume by contradiction that the order type of $\Lambda(X_i)$ is strictly greater than $\omega^{c}+1$. Then there exist $\nu_1 < \nu_2$ in $\Lambda(X_i)$ such that $\left\{ \nu \in \Lambda(X_i) \mid \nu <\nu_1 \right\}$ is of order type $\omega^c$. By lower semicontinuity proven in \cite[Theorem $1.5$]{junyi2015periodic} (see Theorem~\ref{thm: junyi}), the set $L(\nu_1)=\{ x \in \Bir_d(\PP^2) \mid \lambda(x) \leq \nu_1 \}$ is closed in $\Bir_d(\PP^2)$, and thus $L(\nu_1)\cap X_i$ is closed in $X_i$. If the dimension of this set were strictly smaller than $c$, its image under the map $\lambda\colon \Bir_d(\PP^2)\rightarrow\R^{\geq 1}$ in $\Lambda(X_i)$ would by induction be smaller or equal to $\omega^{c-1}m+m$ for some $m$ counting the irreducible components of $L(\nu_1)\cap X_i$. But $\omega^{c-1}m+m<\omega^c$, a contradiction. Thus, $L(\nu_1)\cap X_i=X_i$, which in turn implies that $\nu_2 \leq \nu_1$, a contradiction. Therefore, any irreducible component $X_i$ of $X$ satisfies $\Lambda(X_i)\leq \omega^c+1$ and hence $\Lambda(X)\leq \omega^c n +n$. This finishes the induction an proves the proposition.
\end{proof}

\begin{theorem} \label{thm: bounding from above}
	Over any field $\mathbf{k}$, the ordinal of $\Lambda(\Bir(\PP^2))$ is less or equal to $\omega^{\omega}$.
\end{theorem}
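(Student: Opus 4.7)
The plan is to combine the ordinal bound of Proposition~\ref{prop: order type for general variety with morphism to Bir} with Xie's lower semicontinuity (Theorem~\ref{thm: junyi}), reducing the problem to a control of initial segments of $\Lambda(\Bir(\PP^2))$. For each integer $M \geq 1$, set $L_M := \Lambda(\Bir(\PP^2)) \cap [1, M]$, which is an initial segment of the well ordered set $\Lambda(\Bir(\PP^2))$. Since $\Lambda(\Bir(\PP^2)) = \bigcup_M L_M$ is an increasing chain of initial segments, one obtains $\mathrm{ord}(\Lambda(\Bir(\PP^2))) = \sup_M \mathrm{ord}(L_M)$, and since $\omega^\omega = \sup_n \omega^n$, it is enough to bound each $\mathrm{ord}(L_M)$ by a finite power of $\omega$.

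For this, I would decompose $L_M$ along the degree stratification: $L_M = \bigcup_{d \geq 1} \lambda(Y_{d, M})$, where $Y_{d, M} := \{f \in \Bir_d(\PP^2) : \lambda(f) \leq M\}$ is Zariski closed in $\Bir_d(\PP^2)$ by Theorem~\ref{thm: junyi}. Proposition~\ref{prop: order type for general variety with morphism to Bir} then gives $\mathrm{ord}(\lambda(Y_{d, M})) \leq \omega^{\dim Y_{d, M}} \cdot n_{d, M} + n_{d, M}$, a finite power of $\omega$. The finitely many indices $d \leq M$ have $Y_{d, M} = \Bir_d(\PP^2)$ and contribute a combined ordinal bounded by a finite power of $\omega$, since $\omega^\omega$ is closed under finite ordinal sums.

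The main obstacle is the countable tail of indices $d > M$. Xie's theorem makes each such $Y_{d, M}$ a proper Zariski closed subset of $\Bir_d(\PP^2)$, but only gives $\dim Y_{d, M} < \dim \Bir_d(\PP^2)$, a bound that grows linearly with $d$. To conclude one needs a uniform bound $\dim Y_{d, M} \leq D(M)$ independent of $d$, expressing the principle that birational maps of dynamical degree at most $M$ form a family of bounded dimension regardless of their algebraic degree. Granted such a bound, $\mathrm{ord}(\lambda(Y_{d, M})) \leq \omega^{D(M)+1}$ uniformly in $d$, so $\mathrm{ord}(L_M)$ is a finite power of $\omega$ as needed. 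Without it the naive infinite union bound fails: a countable union of well ordered subsets of $\R$ with ordinals tending to $\omega^\omega$ can overshoot, as illustrated by the decomposition $\omega^\omega + 1 = \{\omega^\omega\} \sqcup \bigsqcup_n [\omega^n, \omega^{n+1})$. Establishing this geometric uniform dimension bound is thus the technical heart of the proof.
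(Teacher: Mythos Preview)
The paper's proof is much shorter than your proposal: it applies Proposition~\ref{prop: order type for general variety with morphism to Bir} to each $\Bir_d(\PP^2)$ to obtain $\mathrm{ord}(\Lambda(\Bir_d(\PP^2))) < \omega^{4d+7}$, and then passes directly from this to the bound $\omega^\omega$ on the union $\bigcup_{d} \Lambda(\Bir_d(\PP^2))$. No initial segments $L_M$, no loci $Y_{d,M}$, and no uniform dimension bound enter the argument.

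Your approach has a gap that is independent of, and more serious than, the unproven dimension bound. You correctly flag that a countable union of well-ordered subsets of $\R$ can overshoot $\omega^\omega$, but you attribute this to the piece-ordinals \emph{tending to} $\omega^\omega$ and infer that a uniform bound $\mathrm{ord}(\lambda(Y_{d,M})) \leq \omega^{D(M)+1}$ would force $\mathrm{ord}(L_M)$ below a finite power of $\omega$. That inference is exactly the one you just warned against: any countable well-ordered subset of $\R$ --- in particular one of order type $\omega^\omega+1$ --- is already a countable union of singletons, each of order type $1$. A uniform ordinal bound on countably many pieces therefore places \emph{no} constraint on the order type of their union. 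So even granting $\dim Y_{d,M}\le D(M)$, the step ``hence $\mathrm{ord}(L_M)$ is a finite power of $\omega$'' is unjustified, and the ``technical heart'' you isolate would not close the argument even if it were established.
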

\begin{proof}
	By Proposition~\ref{prop: order type for general variety with morphism to Bir}, we see that $\Lambda(\Bir_d(\PP^2))$ is of order type strictly less than $\omega^{\dim(\Bir_d(\PP^2))+1}=\omega^{4d+7}$, where $\dim(\Bir_d(\PP^2))=4d+6$ by \cite[Theorem~$1$]{MR3319964}. As $\Bir(\PP^2)=\underset{d\geq 1}{\bigcup} \Bir_d(\PP^2)$, we find that $\Lambda(\Bir(\PP^2)) =  \underset{d\geq 1}{\bigcup}\Lambda(\Bir_d(\PP^2))$ is of order type at most~$\omega^\omega$.
\end{proof}

\section{Arithmetic properties of auxiliary polynomials} \label{section: arithmetic properties of auxiliary polynomials}

To bound $\Lambda(\Bir(\PP^2))$ from below, we first collect arithmetic facts about very specific, auxiliary polynomials and their largest real roots, all of which plays a part in Sections~\ref{section: bounding from below, Weyl group} and~\ref{section: bounding from below, realisations}. This current section thus may be skipped if one would first like to understand why the polynomials below and their largest positive roots are of importance, and reference back to the results once they are used. The polynomials of interest are the following:

\begin{definition} \label{def: auxiliary polynomials}
	Fix $d \geq 1$, $1 \leq m \leq 2d-1$ and $\underline{n}=(n_2, \ldots, n_m)$. Abbreviate $I_m=\{2, \ldots, m\}$. Then, define \[p_{d, \underline{n}}(X) = (X^2-(d-1)X-1)\prod_{i=2}^{m}(X^{n_i}+1)+X\sum_{i=2}^{m}\prod_{j \in I_m\setminus \{i\}}(X^{n_j}+1).\]
\end{definition}

The polynomial $p_{d,\underline{n}}(X)$ appears as a factor in the characteristic polynomial of a certain matrix: fix $d\geq 1$, $1 \leq m \leq 2d-1$ and $\underline{n}=(n_2, \ldots, n_{m})$ and consider the matrix
\begin{align}
	J_d^{\underline{n}}=\left(\begin{array}{c|c|c|c|c|c}
		d & \begin{smallmatrix}
			0 & ~  & \phantom{-}d-1
		\end{smallmatrix} & \begin{smallmatrix}
			0 & \,\scalebox{1.005}{$\cdots$} & 0 & \phantom{-}1
		\end{smallmatrix} & \begin{smallmatrix}
			0 & \,\scalebox{1.005}{$\cdots$} & 0 & \phantom{-}1
		\end{smallmatrix}& \cdots &  \begin{smallmatrix}
			0 & ~& \,\scalebox{1.005}{$\cdots$} & ~& 0 & \phantom{-}1
		\end{smallmatrix} \\
		\hline
		\begin{smallmatrix}
			~\\~\\-(d-1) \\ ~\\ 0 \\ ~
		\end{smallmatrix} & \begin{smallmatrix}
			~ & ~ \\ ~ & ~ \\ 0 & -(d-2) \\ ~ & ~ \\ 1 & \phantom{-}0 \\~ & ~ 
		\end{smallmatrix}& \begin{smallmatrix}
			~\\
			0 & \,\scalebox{1.005}{$\cdots$} & 0 &-1 \\ ~\\
			0 & \,\scalebox{1.005}{$\cdots$} & 0& \phantom{-}0
		\end{smallmatrix} & \begin{smallmatrix}
			~\\
			0 & \,\scalebox{1.005}{$\cdots$} & 0 &-1 \\ ~\\
			0 & \,\scalebox{1.005}{$\cdots$} & 0& \phantom{-}0
		\end{smallmatrix} & \cdots & \begin{smallmatrix}
			~\\
			0 & ~& \,\scalebox{1.005}{$\cdots$} & ~& 0 &-1 \\ ~\\
			0 & ~&\,\scalebox{1.005}{$\cdots$} & ~& 0& \phantom{-}0
		\end{smallmatrix} \\
		\hline
		\begin{smallmatrix}
			~\\~\\-1 \\ ~ \\ 0 \\ \vdots\\ ~ \\ 0 \\ ~
		\end{smallmatrix} & \begin{smallmatrix}
			~\\ 0 & ~ & -1 & ~ \\ ~\\ 0 &~ & \phantom{-}0& ~ \\ \vdots &~ & \phantom{-}\vdots& ~ \\ ~\\ 0 &~ & \phantom{-}0& ~
		\end{smallmatrix} & \begin{smallmatrix}
			0 & \scalebox{1.005}{$\cdots$} & 0 & -1 \\ ~\\~& ~& ~ & \phantom{-}0 \\ ~ & \raisebox{0.15em}{$\mathbbm{1}_{n_2-1}$} & ~ & \phantom{-}\vdots \\~\\~& ~& ~ & \phantom{-}0
		\end{smallmatrix}  & \bf{0}&\cdots & \bf{0}  \\
		\hline
		\begin{smallmatrix}
			~\\~\\-1 \\ ~ \\ 0 \\ \vdots\\ ~ \\ 0 \\ ~
		\end{smallmatrix} & \begin{smallmatrix}
			~\\ 0 & ~ & -1 & ~ \\ ~\\ 0 &~ & \phantom{-}0& ~ \\ \vdots &~ & \phantom{-}\vdots& ~ \\ ~\\ 0 &~ & \phantom{-}0& ~
		\end{smallmatrix}  & \bf{0} & \begin{smallmatrix}
			0 & \scalebox{1.005}{$\cdots$} & 0 & -1 \\ ~\\~& ~& ~ & \phantom{-}0 \\ ~ & \raisebox{0.15em}{$\mathbbm{1}_{n_3-1}$} & ~ & \phantom{-}\vdots \\~\\~& ~& ~ & \phantom{-}0
		\end{smallmatrix}  & \cdots & \bf{0}  \\
		\hline
		\vdots& \vdots & \vdots & \vdots  & \ddots & \vdots  \\
		\hline
		\begin{smallmatrix}
			~\\~\\-1 \\ ~ \\ 0 \\ \vdots\\ ~ \\ 0 \\ ~
		\end{smallmatrix} & \begin{smallmatrix}
			~\\ 0 & ~ & -1 & ~ \\ ~\\ 0 &~ & \phantom{-}0& ~ \\ \vdots &~ & \phantom{-}\vdots& ~ \\ ~\\ 0 &~ & \phantom{-}0& ~
		\end{smallmatrix}  & \bf{0} & \bf{0}  & \cdots & \begin{smallmatrix}
			0 & \scalebox{1.005}{$\cdots$} & 0 & -1 \\ ~\\~& ~& ~ & \phantom{-}0 \\ ~ & \raisebox{0.15em}{$\mathbbm{1}_{n_{m}-1}$} & ~ & \phantom{-}\vdots \\~\\~& ~& ~ & \phantom{-}0
		\end{smallmatrix}
	\end{array}\right), \label{eq: block matrix}\tag{$\diamondsuit$}
\end{align}
with $\mathbbm{1}_{n_k-1}$ the identity matrix of dimension $n_k-1$. Here, $\bf{0}$ is short hand for the zero matrix, always of the respective suitable dimension.

\begin{lemma} \label{lemma: characteristic poly of all de Jonquieres}
	Consider $d\geq 1$, $1 \leq m \leq 2d-1$ and $\underline{n}=(n_2, \ldots, n_{m})$. Then, the characteristic polynomial $\charpoly_{d,\underline{n}}(X)$ of $J_d^{\underline{n}}$ satisfies $\charpoly_{d,\underline{n}}(X)=(X-1)p_{d, \underline{n}}(X)$.
\end{lemma}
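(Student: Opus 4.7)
The plan is to exploit the block structure of $J_d^{\underline{n}}$ via a Schur complement argument. First, I would partition the rows and columns into $m+1$ groups of sizes $3, n_2, n_3, \ldots, n_m$ (merging the first two displayed block rows and columns into a single $3\times 3$ block $A$), so that
\[ J_d^{\underline{n}} = \begin{pmatrix} A & b_2 & \cdots & b_m \\ c_2 & C_2 & & \\ \vdots & & \ddots & \\ c_m & & & C_m \end{pmatrix}, \]
where
\[ A = \begin{pmatrix} d & 0 & d-1 \\ -(d-1) & 0 & -(d-2) \\ 0 & 1 & 0 \end{pmatrix}, \qquad C_i = \begin{pmatrix} 0 & \cdots & 0 & -1 \\ 1 & & & 0 \\ & \ddots & & \vdots \\ & & 1 & 0 \end{pmatrix} \in M_{n_i}(\Z). \]
A direct cofactor expansion gives $\det(XI - C_i) = X^{n_i}+1$.

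The key observation is that the off-diagonal couplings are rank one and share the same vectors independently of $i$. Explicitly, $b_i = u\,e_{n_i}^{\top}$ with $u = (1,-1,0)^{\top}$ and $c_i = e_1^{(n_i)}\, v^{\top}$ with $v = (-1,0,-1)^{\top}$. Consequently
\[ b_i\,(XI-C_i)^{-1}\,c_i = \bigl[(XI-C_i)^{-1}\bigr]_{n_i,\,1}\, u v^{\top}. \]
The entry $\bigl[(XI-C_i)^{-1}\bigr]_{n_i,\,1}$ is computed from the $(1,n_i)$-cofactor of $XI-C_i$, which after deleting row $1$ and column $n_i$ leaves a triangular matrix with $-1$ on the diagonal; combining the sign $(-1)^{n_i+1}$ with that minor $(-1)^{n_i-1}$ and dividing by $X^{n_i}+1$ yields simply $\tfrac{1}{X^{n_i}+1}$.

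Second, I would apply the Schur complement identity
\[ \det(XI-J_d^{\underline{n}}) = \det\!\Bigl(XI - A - S(X)\, uv^{\top}\Bigr)\,\prod_{i=2}^{m}(X^{n_i}+1), \qquad S(X) := \sum_{i=2}^{m}\frac{1}{X^{n_i}+1}. \]
The resulting $3 \times 3$ determinant is elementary: expanding along the first row (which has a zero in the middle entry) and setting $a = S(X) - d$ gives, after simplification,
\[ \det(XI - A - S(X)\,uv^{\top}) = X^3 + aX^2 - (a+2)X + 1. \]
Evaluating at $X=1$ gives $0$, so we may factor out $(X-1)$, obtaining
\[ (X-1)\bigl(X^2 + (S(X) - d + 1)X - 1\bigr). \]

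Third, substituting back and clearing the denominator in $S(X)$ by multiplying through by $\prod_{i=2}^{m}(X^{n_i}+1)$, the second factor becomes
\[ (X^2-(d-1)X-1)\prod_{i=2}^{m}(X^{n_i}+1) + X\sum_{i=2}^{m}\prod_{j\in I_m\setminus\{i\}}(X^{n_j}+1), \]
which is precisely $p_{d,\underline{n}}(X)$. This yields $\charpoly_{d,\underline{n}}(X) = (X-1)\, p_{d,\underline{n}}(X)$.

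The only real obstacle is bookkeeping: identifying correctly the rank-one structure of each $b_i, c_i$ (so that the sum collapses onto a single $uv^{\top}$ term with a scalar coefficient $S(X)$), and tracking the signs in the cofactor computation for $\bigl[(XI-C_i)^{-1}\bigr]_{n_i,1}$. Everything else is a short polynomial expansion.
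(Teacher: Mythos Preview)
Your argument is correct and genuinely different from the paper's. The paper proceeds by hand: it first row-reduces each $n_i\times n_i$ cyclic block inside $XI-J_d^{\underline{n}}$ to expose a single entry $X^{n_i}+1$, then performs repeated Laplace expansions along the columns containing those entries, arriving at an inductive recursion $\det(A_j)=(X-1)X\prod_{i>j}(X^{n_i}+1)+(X^{n_j}+1)\det(A_{j+1})$ that unwinds to the claimed factorisation. Your Schur complement route bypasses this recursion entirely by recognising at the outset that every off-diagonal coupling factors as the same rank-one matrix $uv^{\top}$, so the whole contribution of the $C_i$-blocks collapses to the scalar $S(X)=\sum_i(X^{n_i}+1)^{-1}$ multiplying a fixed $3\times 3$ perturbation. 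What you gain is brevity and a clear structural explanation of why the answer has the shape it does; what the paper's approach buys is that it stays purely within polynomial arithmetic and cofactor expansion, never needing to invoke inverses over $\mathbf{k}(X)$ or check that the Schur block is invertible there (a point you might make explicit, though it is immediate since $\det(XI-C_i)=X^{n_i}+1\neq 0$ in $\mathbf{k}(X)$).
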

\begin{proof}
	To calculate the characteristic polynomial of $J^{\underline{n}}_d$ consider first the last $n_{m}+1$ rows of $X\mathbbm{1}-J^{\underline{n}}_d$:
	\begin{align*}
		\left(\begin{array}{c|c|c|c|c|c}
			\begin{smallmatrix}
				1 \\ ~ \\ 0 \\ ~ \\ \vdots \\ ~ \\ ~ \\ 0 
			\end{smallmatrix} & \begin{smallmatrix}
				0 & ~ & 1 & ~ \\  ~\\ 0 &~ & 0& ~ \\ ~ \\ \vdots &~ & \vdots& ~  \\ ~ \\ ~\\ 0 &~ & 0& ~
			\end{smallmatrix}  & \bf{0} &  \cdots & \bf{0}  & \begin{smallmatrix}
				X & 0 & \cdots & 0 & 1 \\ 
				~ \\
				-1 & X & ~ & ~ & 0 \\ 
				~ & -1& \ddots & ~ & \vdots \\
				~ & ~ & \ddots & X & 0 \\ ~ \\
				~ & ~ & ~ & -1 & X
			\end{smallmatrix}
		\end{array}\right).
	\end{align*}
	With row manipulations, this can be reduced to 
	\begin{align*}
		\left(\begin{array}{c|c|c|c|c|c}
			\begin{smallmatrix}
				1 \\ ~ \\ 0 \\ ~ \\ \vdots \\ ~ \\ ~ \\ 0 
			\end{smallmatrix} & \begin{smallmatrix}
				0 & ~ & 1 & ~ \\  ~\\ 0 &~ & 0& ~ \\ ~ \\ \vdots &~ & \vdots& ~  \\ ~ \\ ~\\ 0 &~ & 0& ~
			\end{smallmatrix}  & \bf{0} &  \cdots & \bf{0}  & \begin{smallmatrix}
				\phantom{-}0 & ~ & \phantom{-}0 & \cdots & \phantom{-}0 & \phantom{-}X^{n_{m}}+1 \\ 
				~ \\
				-1 & ~& \phantom{-}0 & ~ & ~ & \phantom{-}X^{n_{m}-1} \\ 
				~ & ~& -1 & \ddots & ~ & \vdots \\
				~ & ~& ~ & \ddots& \phantom{-}0 & \phantom{-}X^2 \\ ~ \\
				~ & ~& ~ & ~ & -1 & X
			\end{smallmatrix}
		\end{array}\right).
	\end{align*}
	Repeating this for the other rows and using Laplace expansion on the columns with the $-1$ entries, we find
	\begin{align}
		\charpoly_{d,\underline{n}}(X)&=\det \left( \begin{array}{c|c}
			\begin{smallmatrix}
				X-d & \phantom{-}0 & -(d-1) \\ d-1 & \phantom{-}X & \phantom{-}d-2 \\ 0 & -1 & \phantom{-}X \\ ~
			\end{smallmatrix} & \begin{smallmatrix}
				-1 & ~ &~ &~ & -1 &~ &~ &  \scalebox{1.005}{$\cdots$}~ & & -1 \\ \phantom{-}1 & ~&~ &~ &\phantom{-}1 &~ &~ &  \scalebox{1.005}{$\cdots$} &~ & \phantom{-}1 \\ \phantom{-}0 &~ &~ & ~ &  \phantom{-}0 &~ &~ &  \scalebox{1.005}{$\cdots$} &~ & \phantom{-}0 \\ ~
			\end{smallmatrix} \\ \hline
			\begin{smallmatrix}
				~ \\ ~\\
				1 & ~ &~ & 0 &~ &~ & \phantom{-}1 \\ ~ \\1 &~ &~ & 0 &~ &~ & \phantom{-}1 \\ \vdots &~ &~ & \vdots &~ &~ & \phantom{-}\vdots\\ ~ \\ 1 &~ &~ & 0 &~ &~ & \phantom{-}1
			\end{smallmatrix} & \begin{smallmatrix}
				~ \\ ~\\
				\phantom{--}X^{n_2}+1 & ~ & ~ & ~ \\~\\ ~ & \phantom{-}X^{n_3}+1 &~ & ~ \\ ~ & ~ &\ddots & ~ \\ ~\\~ & ~ & ~ & X^{n_{m}}+1
			\end{smallmatrix}
		\end{array}\right) \nonumber \\
		&=\det \left( \begin{array}{c|c}
			\begin{smallmatrix}
				X-d & \phantom{-}0 & -(d-1) \\ 
				X-1 & \phantom{-}X & -1 \\ 
				0 & -1 & \phantom{-}X \\ ~
			\end{smallmatrix} & \begin{smallmatrix}
				-1 & ~ &~ &~ & -1 &~ &~ &  \scalebox{1.005}{$\cdots$}~ & & -1 \\ \phantom{-}0 & ~&~ &~ &\phantom{-}0 &~ &~ &  \scalebox{1.005}{$\cdots$} &~ & \phantom{-}0 \\ \phantom{-}0 &~ &~ & ~ &  \phantom{-}0 &~ &~ &  \scalebox{1.005}{$\cdots$} &~ & \phantom{-}0 \\ ~
			\end{smallmatrix} \\ \hline
			\begin{smallmatrix}
				~ \\ ~\\
				1 & ~ &~ & 0 &~ &~ & \phantom{-}1 \\ ~ \\1 &~ &~ & 0 &~ &~ & \phantom{-}1 \\ \vdots &~ &~ & \vdots &~ &~ & \phantom{-}\vdots\\ ~ \\ 1 &~ &~ & 0 &~ &~ & \phantom{-}1
			\end{smallmatrix} & \begin{smallmatrix}
				~ \\ ~\\
				\phantom{--}X^{n_2}+1 & ~ & ~ & ~ \\~\\ ~ & \phantom{-}X^{n_3}+1 &~ & ~ \\ ~ & ~ &\ddots & ~ \\ ~\\~ & ~ & ~ & X^{n_{m}}+1
			\end{smallmatrix}
		\end{array}\right) \label{eq: laplace}
	\end{align}
	If we apply Laplace expansion to the fourth column, we find that $\charpoly_{d,\underline{n}}(X)$ is equal to
	\begin{align*}
		&\phantom{=}~\det\left(\begin{array}{c|c}
			\begin{smallmatrix}
				X-1 & \phantom{-}X & -1 \\ 
				0 & -1 & \phantom{-}X \\ ~
			\end{smallmatrix} & \begin{smallmatrix}
				\phantom{-}0 &~ &~ &~ &  \scalebox{1.005}{$\cdots$} &~ & \phantom{-}0 \\ \phantom{-}0 &~&~ &~ &  \scalebox{1.005}{$\cdots$} &~ & \phantom{-}0 \\ ~
			\end{smallmatrix} \\ \hline
			\begin{smallmatrix}
				~ \\ ~\\
				1 & ~ &~ & 0 &~ & \phantom{-}1 \\ ~ \\1 &~ &~ & 0 &~ & \phantom{-}1 \\ \vdots &~ &~ & \vdots &~ & \phantom{-}\vdots\\ ~ \\ 1 &~ &~ & 0 &~ & \phantom{-}1
			\end{smallmatrix} & \begin{smallmatrix}
				~\\ ~ \\ ~ & 0\phantom{-} & \scalebox{1.00}{$\cdots$} & \phantom{-}0\\ ~ \\ ~ & \phantom{-}X^{n_3}+1 &~ & ~ \\ ~ & ~ &\ddots & ~ \\ ~\\~ & ~ & ~ & X^{n_{m}}+1
			\end{smallmatrix}
		\end{array}\right)+(X^{n_2}+1)\det\left(\begin{array}{c|c}
			\begin{smallmatrix}
				X-d & \phantom{-}0 & -(d-1) \\ 
				X-1 & \phantom{-}X & -1 \\ 
				0 & -1 & \phantom{-}X \\ ~
			\end{smallmatrix} & \begin{smallmatrix}
				-1 & ~ &~ &  \scalebox{1.005}{$\cdots$}~ & & -1 \\ \phantom{-}0 & ~&~ &  \scalebox{1.005}{$\cdots$} &~ & \phantom{-}0 \\ \phantom{-}0 &~  &~ &  \scalebox{1.005}{$\cdots$} &~ & \phantom{-}0 \\ ~
			\end{smallmatrix} \\ \hline
			\begin{smallmatrix}
				~ \\ ~ \\1 &~ &~ & 0 &~ &~ & \phantom{-}1 \\ \vdots &~ &~ & \vdots &~ &~ & \phantom{-}\vdots\\ ~ \\ 1 &~ &~ & 0 &~ &~ & \phantom{-}1
			\end{smallmatrix} & \begin{smallmatrix}
				~ \\ ~\\
				\phantom{-}X^{n_3}+1 &~ & ~ \\ ~  &\ddots & ~ \\ ~\\~ & ~ & X^{n_{m}}+1
			\end{smallmatrix}
		\end{array}\right) \\
		&= (X-1)X\prod_{i \in I_m \setminus \{2\}}(X^{n_i}+1)+(X^{n_2}+1)\det(A_2),
	\end{align*}
	where $A_2$ denotes the matrix obtained by deleting the fourth row and fourth column of the matrix in \eqref{eq: laplace}, which corresponds to the cofactor of the entry $X^{n_2}+1$. Denote by $A_j$ the matrix obtained by also deleting the fourth row and fourth column of $A_{j-1}$. Then, with induction, we find 
	\begin{align*}
		\det(A_j)&=(X-1)X\prod_{i >j}(X^{n_i}+1)+(X^{n_j}+1)\det(A_{j+1})\\ &=(X-1)X\sum_{k > j} \prod_{i \neq k}(X^{n_i}+1) + \prod_{i>j}(X^{n_i}+1)\det(A_{2d-1}).
	\end{align*} Note that \[\det(A_{2d-1})=\det\left(\begin{smallmatrix}
		X-d & \phantom{-}0 & -(d-1) \\ X-1 & \phantom{-}X & -1 \\ 0 &-1 & \phantom{-}X
	\end{smallmatrix}\right) = X^3-dX^2+(d-2)X+1=(X-1)(X^2-(d-1)X-1).\]
	Thus, $\charpoly_{d,\underline{n}}(X)=(X-1)\left((X^2-(d-1)X-1)\prod_{i=2}^{m}(X^{n_i}+1)+X\sum_{i=2}^{m} \prod_{j \in I_m \setminus \{i\}}(X^{n_j}+1)\right)=(X-1)p_{d, \underline{n}}(X)$, as claimed.
\end{proof}

Using this correspondence between these polynomials and the characteristic polynomial of the matrices $J_{d}^{\underline{n}}$ in \eqref{eq: block matrix}, one can prove that there is at most one root of $p_{d, \underline{n}}(X)$ whose absolute value is larger than $1$, and this root is real. To show this, we first prove the following lemma, for which we define $Q=\left(\begin{smallmatrix}
	1 & ~ & ~ & ~ \\ ~& -1 & ~ & ~ \\ ~& ~& \ddots & ~\\ ~& ~&~& -1
\end{smallmatrix}\right)$. Also, we write $A^T$ for the transpose of a matrix $A$.

\begin{lemma} \label{lemma: adapted intersection form}
	For $d\geq 1$, $1\leq m\leq 2d-1$ and $\underline{n}=(n_2, \ldots, n_m)$, the matrix $J_d^{\underline{n}}$ satisfies $J_d^{\underline{n}}Q(J_d^{\underline{n}})^T=Q+H_{d,m}$ where \[H_{d,m}=\left(\begin{array}{cc|ccc}
		\phantom{-}2d-1-m & -(2d-1-m) & 0 & \cdots & 0 \\
		-(2d-1-m) & \phantom{-}2d-1-m & 0 & \cdots & 0 \\
		\hline
		\phantom{-}0 & \phantom{-}0 & ~& ~& ~\\
		\phantom{-}\vdots & \phantom{-}\vdots & ~& \bf{0}& ~\\
		\phantom{-}0 & \phantom{-}0 & ~& ~& ~\\
	\end{array}\right).\]
	The matrix $H_{d,m}$ is positive semidefinite.
\end{lemma}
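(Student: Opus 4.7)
The claim has two independent parts, which I would address in order.

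For the PSD statement: $H_{d,m}$ has only one nonzero block, the top-left $2\times 2$ equal to $(2d-1-m)\begin{pmatrix} 1 & -1 \\ -1 & 1 \end{pmatrix}=(2d-1-m)\,vv^T$ with $v=(1,-1)^T$. The hypothesis $m\le 2d-1$ makes the scalar nonnegative, so $H_{d,m}$ is a nonnegative multiple of a rank-one positive semidefinite matrix. I expect this part to be immediate.

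For the identity $JQJ^T=Q+H_{d,m}$ with $J:=J_d^{\underline{n}}$, my plan is to compute $JQJ^T$ block by block, using the decomposition of coordinates into $\{0\}$, the header pair $\{1,2\}$, and one $n_k$-block per $k=2,\ldots,m$. Since $Q=\mathrm{diag}(1,-1,\ldots,-1)$, every entry expands as $(JQJ^T)_{ij}=J_{i,0}J_{j,0}-\sum_{k\ge 1}J_{i,k}J_{j,k}$. Two structural features of \eqref{eq: block matrix} will eliminate most contributions: the off-diagonal blocks between distinct $n_k$-blocks are zero (so rows in different blocks interact only via the first three columns), and within each $n_k$-block every row has a single nonzero entry inside its own block, either a $-1$ in the last position (top row) or a $1$ from $\mathbbm{1}_{n_k-1}$ (subsequent rows).

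Using these features I would verify that $(JQJ^T)_{ij}=Q_{ij}$ for every $(i,j)$ outside the $\{0,1\}\times\{0,1\}$ corner. The representative cancellations are: for two top-of-block indices in distinct $n_k$-blocks, the contribution $J_{i,0}J_{j,0}=1$ is cancelled by $J_{i,2}J_{j,2}=1$ once the $Q$-sign is accounted for; for a header index paired with a top-of-block index one finds $d\cdot(-1)-(d-1)(-1)-1\cdot(-1)=0$; and any entry involving index $2$ is trivial, because column $2$ of $J$ has only the single nonzero entry $J_{2,1}=1$. For the remaining $2\times 2$ corner I would compute directly $(JQJ^T)_{00}=d^2-(d-1)^2-(m-1)=2d-m$, $(JQJ^T)_{01}=-d(d-1)+(d-1)(d-2)+(m-1)=-(2d-1-m)$, and $(JQJ^T)_{11}=(d-1)^2-(d-2)^2-(m-1)=2d-m-2$; subtracting $Q$ then yields exactly the nonzero corner of $H_{d,m}$. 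The only real obstacle is bookkeeping across the variable block sizes $n_k$; no conceptual difficulty arises, since after the sparsity patterns are isolated, each entry is a short finite sum in which the column-$0$ and column-$2$ contributions cancel against those from the block diagonals to leave the predicted discrepancy.
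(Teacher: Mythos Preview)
Your proposal is correct and follows essentially the same approach as the paper: the paper simply asserts that the identity $JQJ^T=Q+H_{d,m}$ is a direct calculation (you have carried it out in detail, and the block-by-block cancellations you describe are accurate), and for positive semidefiniteness the paper computes the characteristic polynomial of $H_{d,m}$ to find eigenvalues $0$ and $2(2d-1-m)$, whereas your rank-one observation $H_{d,m}=(2d-1-m)\,vv^T$ is an equivalent and slightly slicker route. One minor slip: where you write ``column $2$ of $J$ has only the single nonzero entry $J_{2,1}=1$'' you mean \emph{row} $2$ of $J$; the conclusion is unaffected.
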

\begin{proof}
	The first part of the claim follows by a direct calculation using $Q$ and $J_{d}^{\underline{n}}$ given in \eqref{eq: block matrix}. As the characteristic polynomial of $H_{d,m}$ is equal to the product of $X-2(2d-1-m)$ and some power of $X$, and since $m\leq 2d-1$, all eigenvalues of $H_{d,m}$ are non-negative. Thus, $H_{d,m}$ is positive semidefinite.
\end{proof}

Equipped with this lemma, we can prove that there is at most one root of $p_{d,\underline{n}}(X)$ whose absolute value is greater than $1$, and this must be a real root. Note that the idea of the proof comes directly from the proof of Theorem~$5.1$, Assertion~($1$) of Diller and Favre~\cite{MR1867314}, yet we do not use the Theorem itself as we only deal with matrices and polynomials, and do not want to construct any surface on which they may or may not be realised as birational maps. 

\begin{proposition} \label{prop: root real and strictly larger than 1}
	Fix $d\geq 1$, $1\leq m\leq 2d-1$ and $\underline{n}=(n_2,\ldots, n_m)$. Then, $p_{d, \underline{n}}(X)$ has at most one root whose modulus is strictly larger than $1$, and this root is real.
\end{proposition}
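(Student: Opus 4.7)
The plan is to follow the argument of Diller and Favre in their proof of Theorem~$5.1(1)$ in~\cite{MR1867314}, using only the matrix $J := J_d^{\underline{n}}$ and the identity $J Q J^T = Q + H_{d,m}$ supplied by Lemma~\ref{lemma: adapted intersection form}, without constructing any underlying surface. Because $\charpoly_{d,\underline{n}}(X) = (X-1)\, p_{d,\underline{n}}(X)$ by Lemma~\ref{lemma: characteristic poly of all de Jonquieres}, and because $X=1$ has modulus equal to $1$, it is enough to prove that $J$ has at most one eigenvalue of modulus strictly greater than~$1$ and that such an eigenvalue, when it exists, is real.

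First I would reinterpret the identity of Lemma~\ref{lemma: adapted intersection form} as a pseudo-isometry statement: for every $v \in \R^{N}$, where $N$ denotes the size of $J$,
\[
(J^T v)^T Q (J^T v) \;=\; v^T J Q J^T v \;=\; v^T Q v + v^T H_{d,m} v \;\geq\; v^T Q v,
\]
because $H_{d,m}$ is positive semidefinite. Since $Q$ has Lorentzian signature $(1,N-1)$, the set $\{v \in \R^{N} : v^T Q v \geq 0\}$ splits into two proper closed convex cones $K^{+}$ and $K^{-}$ meeting only at the origin. A direct check using the first row of $J$ shows $(J^T e_1)^T Q (J^T e_1) = d^2 - (d-1)^2 - (m-1) = 2d - m \geq 1$ and $(J^T e_1)_1 = d > 0$, so $J^T e_1$ lies in the interior of $K^{+}$; combined with the connectedness of that interior and the pseudo-isometry property, this forces $J^T(K^{+}) \subseteq K^{+}$.

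Next, if $w \in \C^{N}$ is an eigenvector of $J^T$ with $J^T w = \mu w$, then $w^{*} J = \bar{\mu}\, w^{*}$, so inserting this into $w^{*} (J Q J^T) w = w^{*}(Q + H_{d,m}) w$ gives
\[
(|\mu|^{2} - 1)\, w^{*} Q w \;=\; w^{*} H_{d,m} w \;\geq\; 0,
\]
and in particular any eigenvector of $J^T$ whose eigenvalue has modulus larger than $1$ satisfies $w^{*} Q w \geq 0$. Since $J^T$ preserves the proper closed convex cone $K^{+}$, the classical Perron--Frobenius theorem for cone-preserving maps yields a real eigenvalue $\rho \geq 0$ of $J^T$, equal to its spectral radius, with eigenvector $v_{*} \in K^{+}$. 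If $\rho \leq 1$ we are done. If $\rho > 1$, I would assume for contradiction that some other eigenvalue $\mu \neq \rho$ also satisfies $|\mu| > 1$, with eigenvector $w \in \C^{N}$; then $v_{*}$ and $w$ (or, if $\mu \notin \R$, the $J^T$-invariant real $2$-plane $\R\,\Re w + \R\,\Im w$) provide two linearly independent directions on which $Q$ is non-negative, which the reverse Cauchy--Schwarz inequality for Lorentzian forms prohibits.

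The main obstacle is the last step, namely ruling out a potentially complex non-real second eigenvalue $\mu$ with $|\mu|>1$. Following Diller and Favre, one passes to the $J^T$-invariant real $2$-plane $\R\,\Re w + \R\,\Im w$ and verifies that $Q$ restricted to this plane, perturbed by the contribution coming from the positive semidefinite $H_{d,m}$, cannot sustain two independent eigendirections of $J^T$ whose eigenvalues have modulus strictly greater than $1$; this is the classical Lorentzian calculation underlying the Salem/Pisot dichotomy mentioned in the introduction.
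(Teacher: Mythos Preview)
Your proposal follows the same Diller--Favre strategy as the paper, using Lemma~\ref{lemma: adapted intersection form} and the signature of $Q$. The gap is in the step where you invoke the reverse Cauchy--Schwarz inequality. Having two linearly independent vectors $v_*, w$ each satisfying $v^*Qv\ge 0$ does \emph{not} by itself contradict the Lorentzian signature---two distinct null vectors in the same cone coexist happily in signature $(1,N-1)$. What contradicts the signature is a $2$-dimensional subspace on which the Hermitian extension of $Q$ is positive semidefinite, and for that you must control the cross term $v_1^*Qv_2$. This is not a difficulty specific to the non-real case, as your last paragraph suggests; it is already needed when $\mu$ is real.

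The paper carries out exactly this computation and thereby dispenses with the Perron--Frobenius step entirely. Taking any two eigenvectors $v_1,v_2$ of $J^T$ for eigenvalues $\mu_1,\mu_2$ with $|\mu_i|>1$, the identity $JQJ^T=Q+H$ yields $(\overline{\mu_i}\mu_j-1)\,v_i^*Qv_j=v_i^*Hv_j$. Positive semidefiniteness of $H$ gives $v_i^*Qv_i\ge 0$, and Cauchy--Schwarz for $H$ together with the elementary inequality $(|\mu_1|^2-1)(|\mu_2|^2-1)\le|\overline{\mu_1}\mu_2-1|^2$ (equivalent to $|\mu_1-\mu_2|^2\ge 0$) gives $(v_1^*Qv_1)(v_2^*Qv_2)\ge|v_1^*Qv_2|^2$. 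Hence $Q$ is positive semidefinite on $\C v_1+\C v_2$, forcing $v_1,v_2$ proportional and $\mu_1=\mu_2$; applying this with $\mu_2=\overline{\mu_1}$ yields reality. Your cone-preservation and Perron--Frobenius argument is correct but becomes unnecessary once this calculation is in hand.
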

\begin{proof}
	By Lemma~\ref{lemma: characteristic poly of all de Jonquieres}, $(X-1)p_{d,\underline{n}}(X)=\charpoly_{d,\underline{n}}(X)$ where $\charpoly_{d,\underline{n}}(X)$ is the characteristic polynomial of the matrix $J_d^{\underline{n}}$ defined in \eqref{eq: block matrix}. Thus, a root of $p_{d, \underline{n}}(X)$ corresponds to an eigenvalue of $J_d^{\underline{n}}$. Suppose by contradiction that we have two eigenvalues $\mu_1, \mu_2 \in \C$ of $J_d^{\underline{n}}$ with $|\mu_1|, |\mu_2|>1$, and denote by $v_1, v_2$ some corresponding eigenvectors. We claim that on the subspace $\C v_1+ \C v_2$, the bilinear form corresponding to $Q$ is positive semidefinite. This is equivalent to proving that $v_i^TQ\overline{v_i}\geq 0$ for $i=1,2$ and $(v_1^TQ\overline{v_1})(v_2^TQ
	\overline{v_2})-|v_1^TQ\overline{v_2}|^2 \geq 0$. 
	
	Abbreviate $H=H_{d,m}$ the matrix from Lemma~\ref{lemma: adapted intersection form}. Note that $\mu_i\overline{\mu_j}v_i^TQ\overline{v_j}=v_i^TJ_d^{\underline{n}}Q(J_d^{\underline{n}})^T\overline{v_j}\stackrel{\ref{lemma: adapted intersection form}}{=}v_i^TQ\overline{v_j}+v_i^TH\overline{v_j}$ for $i,j=1,2$, and thus $v_i^TQ\overline{v_i}=\tfrac{v_i^TH\overline{v_i}}{|\mu_i|^2-1}$ and  $v_1^TQ\overline{v_2}=\tfrac{v_1^TH\overline{v_2}}{\mu_1\overline{\mu_2}-1}$. Since $H$ is positive semidefinite by Lemma~\ref{lemma: adapted intersection form}, we directly see that $v_i^TQ\overline{v_i}\geq 0$ holds. Furthermore,
	\[(v_1^TQ\overline{v_1})(v_2^TQ
	\overline{v_2})-|v_1^TQ\overline{v_2}|^2=\tfrac{1}{(|\mu_1|^2-1)(|\mu_2|^2-1)}(v_1^TH\overline{v_1})(v_2^TH\overline{v_2})-\tfrac{1}{|\mu_1\overline{\mu_2}-1|^2}|v_1^TH\overline{v_2}|^2.\]
	Since $(|\mu_1|^2-1)(|\mu_2|^2-1)\leq|\mu_1\overline{\mu_2}-1|^2$ by expanding $|\mu_1-\mu_2|^2\geq 0$, we deduce
	\[(v_1^TQ\overline{v_1})(v_2^TQ\overline{v_2})-|v_1^TQ\overline{v_2}|^2\geq\tfrac{1}{(|\mu_1|^2-1)(|\mu_2|^2-1)}((v_1^TH\overline{v_1})(v_2^TH\overline{v_2})-|v_1^TH\overline{v_2}|^2)\geq0,\]
	where the last inequality is due to $H$ being positive semidefinite by Lemma~\ref{lemma: adapted intersection form}. Thus, $Q$ is positive semidefinite on $\C v_1 + \C v_2$. But the signature of $Q$ is $(1, \dim(Q)-1)$, so there cannot be a subspace of dimension $2$ on which $Q$ is positive semidefinite. Therefore, $v_1=v_2$ and $\mu_1=\mu_2$. As $\overline{\mu_1}$ is also an eigenvalue of $J_d^{\underline{n}}$, we additionally deduce $\mu_1=\overline{\mu_1}$, and thus that the root of largest absolute value must be real. This finishes the proof of the proposition.
\end{proof}

Thus, $p_{d,\underline{n}}(X)$ has a root of largest absolute value which is real, and we may therefore define:

\begin{definition} \label{def: largest real roots of polynomials}
	For $d\geq 1$, $1\leq m\leq 2d-1$ and $\underline{n}$ we denote by $\lambda_{d, \underline{n}} \in \R$ the root of largest absolute value of $p_{d, \underline{n}}(X)$.
\end{definition}

We are interested in describing $\lambda_{d,\underline{n}}(X)$; by the following proposition, it must be larger than $2$ under mild assumptions.

\begin{proposition} \label{prop: greater than 2}
	Fix $d\geq 4$, $1\leq m \leq 2d-1$ and $\underline{n}=(n_2, \ldots, n_m)$ with $n_i \geq 2$ for all $2\leq i \leq m$. Then, $\lambda_{d, \underline{n}}$ is strictly larger than $2$.
\end{proposition}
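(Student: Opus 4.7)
The plan is to show $p_{d,\underline{n}}(2) < 0$ and then invoke the previous results. Observe from Definition~\ref{def: auxiliary polynomials} that $p_{d,\underline{n}}(X)$ is monic of degree $2 + \sum_{i=2}^{m} n_i$: the first summand contributes a monic $X^{2+\sum n_i}$, while every summand of the second sum has degree $1 + \sum_{j \neq i} n_j < 2 + \sum n_j$. Hence $p_{d,\underline{n}}(X) \to +\infty$ as $X \to +\infty$, so once we establish $p_{d,\underline{n}}(2) < 0$, the intermediate value theorem yields a real root strictly larger than $2$. This root has absolute value $>1$, so Proposition~\ref{prop: root real and strictly larger than 1} forces it to be the unique root of absolute value greater than $1$; by Definition~\ref{def: largest real roots of polynomials}, it is $\lambda_{d,\underline{n}}$, giving $\lambda_{d,\underline{n}} > 2$.

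The key computation is to factor out the product. Since $2^2 - (d-1)\cdot 2 - 1 = 5 - 2d$ and $\prod_{j \in I_m \setminus \{i\}}(2^{n_j}+1) = P/(2^{n_i}+1)$ where $P := \prod_{i=2}^{m}(2^{n_i}+1) > 0$, one finds
$$p_{d,\underline{n}}(2) \;=\; P\left( 5 - 2d \;+\; 2 \sum_{i=2}^{m}\frac{1}{2^{n_i}+1}\right).$$
The hypotheses now kick in: $n_i \geq 2$ yields $\tfrac{1}{2^{n_i}+1} \leq \tfrac{1}{5}$, and $m-1 \leq 2d-2$ bounds the number of summands, so $2 \sum_{i=2}^m \tfrac{1}{2^{n_i}+1} \leq \tfrac{4(d-1)}{5}$. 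Therefore the parenthesis is at most $\tfrac{21 - 6d}{5}$, which is negative precisely for $d \geq 4$ — this is exactly where the hypothesis on $d$ is used.

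There is no real obstacle here; the only thing to notice is the common factor $P$ that makes the sign computation transparent, and the fact that both hypotheses $n_i \geq 2$ and $d \geq 4$ are needed together for the uniform bound $(21-6d)/5 < 0$ to hold independently of $m$ and $\underline{n}$.
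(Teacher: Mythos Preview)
Your proof is correct and follows essentially the same route as the paper: evaluate the monic polynomial at $X=2$, bound the resulting sum using $n_i\geq 2$ and $m\leq 2d-1$, and conclude there is a real root $>2$, which by Proposition~\ref{prop: root real and strictly larger than 1} must be $\lambda_{d,\underline{n}}$. Your version is slightly tidier in that the factorisation $p_{d,\underline{n}}(2)=P\bigl(5-2d+2\sum_i(2^{n_i}+1)^{-1}\bigr)$ handles the case $m=1$ uniformly (the paper treats it separately) and uses the sharp bound $\tfrac{1}{2^{n_i}+1}\le\tfrac15$ rather than the paper's $<\tfrac14$, but these are cosmetic differences.
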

\begin{proof}
	The claim for $m=1$ follows by observing that in that case $\underline{n}$ is the empty tuple and thus $p_{d,\emptyset}(X)=X^2-(d-1)X-1$. Then, its largest root $\lambda$ satisfies $\lambda-\tfrac{1}{\lambda}=d-1$ and thus $2<d-1<\lambda$. Hence, assume $m\geq2$. The polynomial $p_{d,\underline{n}}(X)$ is monic. Thus, if at $X=2$, its value is negative, there must be a zero of $p_{d, \underline{n}}(X)$ which is larger than $2$. The value $p_{d,\underline{n}}(2)$ is strictly negative if and only if 
	\begin{align*}
		2\sum_{i=2}^{m}\prod_{j \in I_m\setminus \{i\}}(2^{n_j}+1)<(2d-5)\prod_{i=2}^{m}(2^{n_i}+1).
	\end{align*}
	Note that $4<2^{n_i}+1$ since $n_i\geq 2$. Therefore:
	\begin{align*}
		2\sum_{i=2}^{m}\prod_{j \in I_m\setminus \{i\}}(2^{n_j}+1)&<2\frac{1}{4}\sum_{i=2}^{m}\prod_{j =2}^{m}(2^{n_j}+1) = \frac{1}{2}(m-1)\prod_{i =2}^{m}(2^{n_i}+1) \\
		&\leq\frac{1}{2}(2d-2)\prod_{i =2}^{m}(2^{n_i}+1)\leq (2d-5)\prod_{i =2}^{m}(2^{n_i}+1),
	\end{align*}
	where in the last step we used $d\geq 4$. This proves the claim.
\end{proof}

Whenever we increase one of the $n_i$ defining some $p_{d,\underline{n}}(X)$, the largest root also increases strictly.

\begin{proposition} \label{prop: increase one parameter, root is larger}
	Fix $d\geq 4$, $2\leq m \leq 2d-1$ and $\underline{n}=(n_2, \ldots, n_m)$ with $n_i\geq 2$. For $2 \leq k \leq m$, denote $\underline{n}_k=(n_2, \ldots, n_{k-1}, n_k+1, n_{k+1}, \ldots, n_{m})$. Then $\lambda_{d, \underline{n}} < \lambda_{d,\underline{n}_k}$.
\end{proposition}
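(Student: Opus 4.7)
The plan is to factor out $\prod_{i=2}^m(X^{n_i}+1)$ from $p_{d,\underline{n}}(X)$ and exploit how this factor depends monotonically on each exponent $n_i$ for $X>1$. Concretely, observe that
\[
p_{d,\underline{n}}(X) = \Bigl(X^2-(d-1)X-1 + X\sum_{i=2}^{m}\frac{1}{X^{n_i}+1}\Bigr)\prod_{i=2}^m (X^{n_i}+1),
\]
and call the bracketed rational function $q_{d,\underline{n}}(X)$. By Proposition~\ref{prop: greater than 2}, the root $\lambda:=\lambda_{d,\underline{n}}$ is real and satisfies $\lambda>2$. In particular each factor $\lambda^{n_i}+1$ is positive, so the equation $p_{d,\underline{n}}(\lambda)=0$ forces
\[
q_{d,\underline{n}}(\lambda) \;=\; \lambda^2-(d-1)\lambda-1+\lambda\sum_{i=2}^m\frac{1}{\lambda^{n_i}+1} \;=\; 0.
\]

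Next I would compare this with the value at $\lambda$ of the analogous rational function associated with $\underline{n}_k$. Since $\lambda>1$, the map $n\mapsto \lambda^{n}+1$ is strictly increasing, so
\[
\frac{1}{\lambda^{n_k+1}+1} \;<\; \frac{1}{\lambda^{n_k}+1},
\]
and multiplying by $\lambda>0$ and adding the other (unchanged) terms yields $q_{d,\underline{n}_k}(\lambda)<q_{d,\underline{n}}(\lambda)=0$. Clearing the (still positive) denominator $\prod_{i=2}^{m}(\lambda^{n_i}+1)$ with $n_k$ replaced by $n_k+1$ gives $p_{d,\underline{n}_k}(\lambda)<0$.

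Finally, $p_{d,\underline{n}_k}(X)$ is monic (its degree is $2+\sum_i n_i +1$ with leading coefficient $1$ by inspection), hence $p_{d,\underline{n}_k}(X)\to+\infty$ as $X\to+\infty$. Therefore there exists a real root $\mu>\lambda>1$. By Proposition~\ref{prop: root real and strictly larger than 1}, $p_{d,\underline{n}_k}$ has at most one root of modulus $>1$, and that root is $\lambda_{d,\underline{n}_k}$; since $\mu$ satisfies $|\mu|>1$, we conclude $\mu=\lambda_{d,\underline{n}_k}$, giving $\lambda_{d,\underline{n}}<\lambda_{d,\underline{n}_k}$ as claimed. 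The only delicate point is the appeal to $\lambda>1$ to guarantee positivity of the product and strict monotonicity of $1/(\lambda^{n}+1)$ in $n$, which is handed to us by Proposition~\ref{prop: greater than 2}; beyond that, the argument is purely formal.
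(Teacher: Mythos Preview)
Your proof is correct and follows essentially the same approach as the paper: both arguments show $p_{d,\underline{n}_k}(\lambda_{d,\underline{n}})<0$ and then use monicity to locate a real root above $\lambda_{d,\underline{n}}$. The only cosmetic difference is that you factor through the rational function $q_{d,\underline{n}}(X)=p_{d,\underline{n}}(X)/\prod_i(X^{n_i}+1)$ and use monotonicity of $n\mapsto 1/(\lambda^n+1)$, whereas the paper clears denominators first and writes the polynomial identity $(X^{n_k}+1)p_{d,\underline{n}_k}(X)=(X^{n_k+1}+1)p_{d,\underline{n}}(X)-(X-1)X^{n_k+1}\prod_{j\neq k}(X^{n_j}+1)$; these are the same computation.
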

\begin{proof}
	The polynomials $p_{d,\underline{n}}(X)$ and $p_{d,\underline{n}_k}(X)$ are related by
	\[(X^{n_k}+1)p_{d,\underline{n}_k}(X)=(X^{n_k+1}+1)p_{d,\underline{n}}(X)-(X-1)X^{n_k+1}\prod_{j\in I_m\setminus \{k\}}(X^{n_j}+1).\]
	By Proposition~\ref{prop: greater than 2}, we know that $\lambda_{d,\underline{n}}>2$. Since $p_{d,\underline{n}}(\lambda_{d,\underline{n}})=0$, we deduce $p_{d,\underline{n}_k}(\lambda_{d, \underline{n}})<0$. The polynomial $p_{d, \underline{n}_k}(X)$ is monic, which implies that it must have a zero which is strictly larger than $\lambda_{d, \underline{n}}$, and thus $\lambda_{d,\underline{n}_k}>\lambda_{d, \underline{n}}$, which implies the claim.
\end{proof}

As a last property of the auxiliary polynomials, we prove that if we fix $n_2, \ldots, n_m \geq 2$ and consider all $\lambda_{d, \underline{n}'}$ where $\underline{n}'=(n_2, \ldots, n_m, n_{m+1})$ with $n_{m+1}\geq2$, then the sequence $(\lambda_{d, \underline{n}'})_{n_{m+1}}$ has $\lambda_{d, \underline{n}}$ as its limit.

\begin{lemma} \label{lemma: adding one parameter is smaller}
	Fix $d\geq 4$, $1\leq m < 2d-1$ and $\underline{n}=(n_2, \ldots, n_m)$, and $\underline{n}'=(n_2, \ldots, n_m, n_{m+1})$, with all $n_i\geq 2$. Then, $\lambda_{d,\underline{n}'}<\lambda_{d, \underline{n}}$. 
\end{lemma}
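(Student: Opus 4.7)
The plan is to derive a one-step recursion relating $p_{d, \underline{n}'}(X)$ to $p_{d, \underline{n}}(X)$ and then locate $\lambda_{d, \underline{n}'}$ by a sign argument at $\lambda_{d, \underline{n}}$. Concretely, I would split the index set $I_{m+1}=I_m\cup\{m+1\}$ in the definition of $p_{d, \underline{n}'}(X)$ and factor $(X^{n_{m+1}}+1)$ out of every product in which it appears; a routine expansion should yield
\[ p_{d, \underline{n}'}(X) \;=\; (X^{n_{m+1}}+1)\,p_{d, \underline{n}}(X) \;+\; X \prod_{i=2}^{m}(X^{n_i}+1). \]

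Next I would invoke Propositions~\ref{prop: greater than 2} and~\ref{prop: root real and strictly larger than 1}, which apply to both $\underline{n}$ and $\underline{n}'$ (for $\underline{n}'$ note that $m+1 \leq 2d-1$ by hypothesis). Each polynomial is monic, both $\lambda_{d, \underline{n}}$ and $\lambda_{d, \underline{n}'}$ are real and strictly greater than $2$, and each is the unique root of its polynomial of modulus $>1$; in particular $\lambda_{d, \underline{n}}$ is the largest real root of $p_{d, \underline{n}}$, so $p_{d, \underline{n}}(X) \geq 0$ for all $X \geq \lambda_{d, \underline{n}}$. Substituting into the recursion, on the interval $[\lambda_{d, \underline{n}}, \infty)$ the first summand is nonnegative and the second is strictly positive (since $X>0$), whence $p_{d, \underline{n}'}(X) > 0$ throughout. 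Because $\lambda_{d, \underline{n}'}$ is a real root of $p_{d, \underline{n}'}$, it must then satisfy $\lambda_{d, \underline{n}'} < \lambda_{d, \underline{n}}$.

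The main point to watch is the edge case $m = 1$, where $\underline{n} = \emptyset$: the empty product/sum conventions reduce $p_{d, \emptyset}(X)$ to $X^2-(d-1)X-1$ and the recursion to $p_{d, (n_2)}(X) = (X^{n_2}+1)\bigl(X^2-(d-1)X-1\bigr) + X$. Proposition~\ref{prop: greater than 2} was already handled separately in this case, so $\lambda_{d, \emptyset} > 2$ still holds and the sign argument applies verbatim. Aside from this bookkeeping, the proof is essentially one line once the recursion is in hand, so the only real work is setting up the recursion cleanly.
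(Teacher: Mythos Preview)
Your proof is correct and follows essentially the same approach as the paper: derive the recursion $p_{d,\underline{n}'}(X)=(X^{n_{m+1}}+1)p_{d,\underline{n}}(X)+X\prod_{i\in I_m}(X^{n_i}+1)$ and finish with a sign argument. The only cosmetic difference is the direction of the evaluation: the paper plugs $\lambda_{d,\underline{n}'}$ into $p_{d,\underline{n}}$ to get $p_{d,\underline{n}}(\lambda_{d,\underline{n}'})<0$ and invokes monicity, whereas you plug $X\geq\lambda_{d,\underline{n}}$ into $p_{d,\underline{n}'}$ to get positivity; these are mirror images of the same one-line argument.
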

\begin{proof}
	The two polynomials $p_{d,\underline{n}}(X)$ and $p_{d,\underline{n}'}(X)$ are related as $p_{d, \underline{n}'}(X)=(X^{n_{m+1}}+1)p_{d,\underline{n}}(X)+X\prod_{j\in I_m}(X^{n_j}+1)$. By Proposition~\ref{prop: greater than 2}, $\lambda_{d, \underline{n}'}>2$, therefore, plugging in $\lambda_{d, \underline{n}'}$, and using that it is a root of $p_{d,\underline{n}'}(X)$, we find
	\[p_{d,\underline{n}}(\lambda_{d, \underline{n}'})=-\lambda_{d,\underline{n}'}\prod_{j\in I_m}(\lambda_{d, \underline{n}'}^{n_j}+1)<0\] 
	As $p_{d, \underline{n}}(X)$ is monic, this proves $\lambda_{d, \underline{n}'}<\lambda_{d, \underline{n}}$. 
\end{proof}

\begin{proposition} \label{prop: limit of polynomials}
	Fix $d\geq 4$, $2\leq m< 2d-1$ and $\underline{n}=(n_2, \ldots, n_{m-1})$ with $n_i\geq 2$. Denote $\underline{n}_{n_m}=(n_2, \ldots, n_{m-1}, n_m)$. Then the limit of the sequence $(\lambda_{d, \underline{n}_{n_m}})_{n_m}$ exists and is equal to $\lambda_{d, \underline{n}}$.
\end{proposition}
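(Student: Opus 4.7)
The plan is to show the sequence $(\lambda_{d,\underline{n}_{n_m}})_{n_m}$ converges by monotonicity and boundedness, and then identify the limit by plugging it into $p_{d,\underline{n}}$ and appealing to the uniqueness statement of Proposition~\ref{prop: root real and strictly larger than 1}.

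First I would establish convergence. By Proposition~\ref{prop: increase one parameter, root is larger}, the sequence $(\lambda_{d,\underline{n}_{n_m}})_{n_m\geq 2}$ is strictly increasing in $n_m$, and by Lemma~\ref{lemma: adding one parameter is smaller} it is bounded above by $\lambda_{d,\underline{n}}$. Hence the limit $L\leq \lambda_{d,\underline{n}}$ exists. Moreover, by Proposition~\ref{prop: greater than 2} applied to the tuple $\underline{n}_{2}=(n_2,\ldots,n_{m-1},2)$, every term in the sequence (and hence $L$) is strictly larger than $2$.

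Next I would use the explicit relation between $p_{d,\underline{n}'}(X)$ and $p_{d,\underline{n}}(X)$ from the proof of Lemma~\ref{lemma: adding one parameter is smaller}, namely
\[
p_{d,\underline{n}_{n_m}}(X)=(X^{n_m}+1)\,p_{d,\underline{n}}(X)+X\prod_{j\in I_{m-1}}(X^{n_j}+1).
\]
Setting $X=\lambda_{d,\underline{n}_{n_m}}$ and using that this is a root of the left-hand side gives
\[
p_{d,\underline{n}}(\lambda_{d,\underline{n}_{n_m}})=-\frac{\lambda_{d,\underline{n}_{n_m}}\prod_{j\in I_{m-1}}(\lambda_{d,\underline{n}_{n_m}}^{n_j}+1)}{\lambda_{d,\underline{n}_{n_m}}^{n_m}+1}.
\]
Since the sequence $\lambda_{d,\underline{n}_{n_m}}$ stays in the bounded interval $(2,\lambda_{d,\underline{n}}]$, the numerator remains bounded (the exponents $n_2,\ldots,n_{m-1}$ are fixed), while the denominator tends to $\infty$ because the base is bounded below by $2>1$ and $n_m\to\infty$. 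Hence the right-hand side tends to $0$, and by continuity of $p_{d,\underline{n}}$ we conclude $p_{d,\underline{n}}(L)=0$.

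Finally, to identify $L$ with $\lambda_{d,\underline{n}}$, I would invoke Proposition~\ref{prop: root real and strictly larger than 1}: the polynomial $p_{d,\underline{n}}(X)$ has at most one root of modulus strictly greater than $1$, namely $\lambda_{d,\underline{n}}$. Since $L>2>1$ is such a root, necessarily $L=\lambda_{d,\underline{n}}$. The main (very mild) obstacle is just ensuring the boundedness used in the limit argument — that is handled by Lemma~\ref{lemma: adding one parameter is smaller} combined with Proposition~\ref{prop: greater than 2}, both already proved.
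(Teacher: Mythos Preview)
Your argument is correct. Both you and the paper start from the same key identity
\[
p_{d,\underline{n}_{n_m}}(X)=(X^{n_m}+1)\,p_{d,\underline{n}}(X)+X\prod_{j\in I_{m-1}}(X^{n_j}+1),
\]
but you organise the endgame differently. The paper fixes an arbitrary $\lambda\in(2,\lambda_{d,\underline{n}})$, observes that $p_{d,\underline{n}}(\lambda)<0$ (since $\lambda_{d,\underline{n}}$ is the only real root above~$1$ and $p_{d,\underline{n}}$ is monic), and then notes that for $n_m$ large the dominant term $(\lambda^{n_m}+1)p_{d,\underline{n}}(\lambda)$ forces $p_{d,\underline{n}_{n_m}}(\lambda)<0$, whence $\lambda<\lambda_{d,\underline{n}_{n_m}}<\lambda_{d,\underline{n}}$. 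This is a direct squeeze and does not use monotonicity of the sequence at all. You instead invoke Proposition~\ref{prop: increase one parameter, root is larger} to get monotonicity, apply the monotone convergence theorem, and then identify the limit $L$ by showing $p_{d,\underline{n}}(L)=0$ via continuity and appealing to the uniqueness in Proposition~\ref{prop: root real and strictly larger than 1}. Your route is slightly longer (one extra cited result) but has the pleasant feature of explicitly exhibiting $L$ as a root; the paper's route is shorter and more self-contained for this particular proposition.
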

\begin{proof}
	Fix some $\underline{n}=(n_2, \ldots, n_{m-1})$ and $2 < \lambda < \lambda_{d, \underline{n}}$. Note that the polynomials satisfy \[p_{d, \underline{n}_{n_m}}(X)=(X^{n_m}+1)p_{d,\underline{n}}(X)+X\prod_{j \in I_{m-1}}(X^{n_j}+1).\] Note that since by Proposition~\ref{prop: root real and strictly larger than 1}, $\lambda_{d, \underline{n}}$ is the only real root of $p_{d,\underline{n}}(X)$ which is larger than $1$, on $(2,\lambda_{d, \underline{n}})$, the polynomial $p_{d, \underline{n}}(X)$ is negative. Thus, there exists $n_m(\lambda)\geq 2$ such that for all $n_m\geq n_m(\lambda)$, we have
	\[p_{d,\underline{n}_{n_m}}(\lambda)=(\lambda^{n_m}+1)p_{d,\underline{n}}(\lambda)+\lambda\prod_{j \in I_{m-1}}(\lambda^{n_j}+1)<0.\]
	This implies that for $n_m\geq n_m(\lambda)$, we have $\lambda<\lambda_{d, \underline{n}_{n_m}} < \lambda_{d, \underline{n}}$, where the last inequality is due to Lemma~\ref{lemma: adding one parameter is smaller}. This proves the claim.
\end{proof}

To finish this section, we prove that we can construct a well ordered set out of the $\lambda_{d, \underline{n}}$ such that we can precisely describe the ordinal of that set. 

\begin{proposition} \label{prop: ordinal of helping sets}
	Consider for $d\geq 4$ and $m \in \{2, \ldots, 2d-1\}$ the inductively defined sets
	\begin{align*}
		\Lambda_{d,1} &= \{ \tfrac{1}{2}(d-1+\sqrt{d^2-2d+5}) \}, \\
		\Lambda_{d,2}&=\{\lambda_{d, (n_2)} \thinspace | \thinspace  2\leq n_2, d-1 < \lambda_{d, (n_2)} \}, \\
		\Lambda_{d,m} &= \{\lambda_{d, (n_2, \ldots, n_m)} \thinspace | \thinspace 2\leq n_2 < \ldots < n_m, \lambda_{d, (n_2, \ldots, n_{m-1}-1)} \in \Lambda_{d,m-1}, \lambda_{d, (n_2, \ldots, n_{m-1}-1)} < \lambda_{d, (n_2, \ldots, n_m)} \}.
	\end{align*}
	The well ordering on the sets $\Lambda_{d,m}$ with respect to the lexicographic ordering on $(n_2, \ldots, n_m)$ agrees with the well ordering inherited from the one on the real line, and the ordinal of $\Lambda_{d, m}$ with respect to this ordering is~$\omega^{m-1}$. Furthermore, the sets $\Lambda_{d, m}$ are disjoint for all $d\geq 4$ and $m \in \{1, \ldots, 2d-1\}$.
\end{proposition}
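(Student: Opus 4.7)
The plan is to argue by induction on $m$, simultaneously establishing all three assertions: that the lex order on tuples matches the real line ordering, that the ordinal is $\omega^{m-1}$, and that $\Lambda_{d, m}$ is disjoint from $\Lambda_{d, j}$ for every $j < m$. The base case $m = 1$ is immediate since $\Lambda_{d, 1}$ is a singleton. For $m = 2$, first observe that $\lambda_{d, \emptyset} = \tfrac{1}{2}(d-1+\sqrt{d^2-2d+5}) > d-1$; combined with Proposition~\ref{prop: limit of polynomials} (yielding $\lambda_{d, (n_2)} \to \lambda_{d, \emptyset}$) and Proposition~\ref{prop: increase one parameter, root is larger} (strict monotonicity in $n_2$), this shows that the inequality $\lambda_{d, (n_2)} > d-1$ holds for all sufficiently large $n_2$, so $\Lambda_{d, 2}$ is a strictly increasing $\omega$-sequence below $\lambda_{d, \emptyset}$. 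Hence its ordinal is $\omega = \omega^1$, lex order matches real order trivially, and $\Lambda_{d, 2} \cap \Lambda_{d, 1} = \emptyset$.

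For the inductive step with $m \geq 3$, assume the assertions for all $k \leq m-1$ and partition $\Lambda_{d, m}$ by the prefix $\sigma = (n_2, \ldots, n_{m-1})$, with $n_m$ varying inside each block. Proposition~\ref{prop: increase one parameter, root is larger} gives strict monotonicity of $\lambda_{d, (n_2, \ldots, n_m)}$ in $n_m$, Proposition~\ref{prop: limit of polynomials} gives convergence from below to $\lambda_{d, \sigma}$, and Lemma~\ref{lemma: adding one parameter is smaller} combined with the defining inequality places the block in the open interval $(\lambda_{d, (n_2, \ldots, n_{m-2}, n_{m-1}-1)}, \lambda_{d, \sigma})$ between two elements of $\Lambda_{d, m-1}$; the second defining condition is automatic for all $n_m$ sufficiently large, so each nonempty block has ordinal $\omega$. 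Next, the admissible prefixes $\sigma$ are those indexing an element of $\Lambda_{d, m-1}$ whose ``$-1$''-shift $(n_2, \ldots, n_{m-2}, n_{m-1}-1)$ also belongs to $\Lambda_{d, m-1}$; by the inductive structural description this amounts to dropping the minimum of each block of $\Lambda_{d, m-1}$, yielding a set of ordinal $\omega^{m-2}$. Concatenating an $\omega$-sequence for each admissible prefix gives ordinal $\omega \cdot \omega^{m-2} = \omega^{m-1}$, with lex order matching real order both inside each block (by monotonicity in $n_m$) and across blocks (by the inductive description of $\Lambda_{d, m-1}$ applied to the upper endpoints $\lambda_{d, \sigma}$).

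For disjointness, $\Lambda_{d, m} \cap \Lambda_{d, m-1} = \emptyset$ follows because each element of $\Lambda_{d, m}$ sits in a within-block open gap of $\Lambda_{d, m-1}$; and for $j < m-1$, the inductive picture shows that elements of $\Lambda_{d, j}$ are accumulation points of blocks at deeper levels and lie at block boundaries (as suprema of blocks of $\Lambda_{d, j+1}$), hence outside the within-block gaps of $\Lambda_{d, m-1}$ in which $\Lambda_{d, m}$ lives. The main obstacle will be the combinatorial bookkeeping around the ``$-1$''-shift: one must verify that it selects precisely the immediate predecessor within the block of $\Lambda_{d, m-1}$, so that the blocks of $\Lambda_{d, m}$ tile the within-block gaps of $\Lambda_{d, m-1}$ without overlap and in the correct order, all the while respecting the strict ordering $n_2 < \cdots < n_m$ and the constraint $n_i \geq 2$ throughout the induction.
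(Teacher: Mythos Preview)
Your approach is essentially the same as the paper's: induction on $m$, partitioning $\Lambda_{d,m}$ into blocks indexed by the prefix $\sigma=(n_2,\ldots,n_{m-1})$, placing each block in the open interval $(\lambda_{d,(n_2,\ldots,n_{m-1}-1)},\lambda_{d,\sigma})$ between two consecutive elements of $\Lambda_{d,m-1}$, and reading off the ordinal as $\omega\cdot\omega^{m-2}=\omega^{m-1}$. Your bookkeeping around the ``$-1$''-shift is fine: since within a fixed $(n_2,\ldots,n_{m-2})$-block the admissible values of $n_{m-1}$ form a tail $\{N,N+1,\ldots\}$ (by monotonicity, once the defining inequality holds it persists), requiring that the shift also lie in $\Lambda_{d,m-1}$ exactly drops the minimum of each block, which does not change the order type $\omega^{m-2}$. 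The paper argues the same way, only phrased from the viewpoint of the intervals $I_{(n_2,\ldots,n_m)}$ rather than the prefixes.

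There is one genuine omission. The proposition asserts that the sets $\Lambda_{d,m}$ are pairwise disjoint as \emph{both} $d\geq 4$ and $m\in\{1,\ldots,2d-1\}$ vary, and this is used later when one forms $\bigsqcup_{d\geq 4}\Lambda_{d,2d-1}$ to reach ordinal $\omega^\omega$. Your induction only yields disjointness of $\Lambda_{d,m}$ from $\Lambda_{d,j}$ for $j<m$ with $d$ fixed. You still need the easy separation in $d$: for $4\leq d<d'$ every element of $\Lambda_{d,m}$ (with $m\geq 2$) is bounded above by $\lambda_{d,\emptyset}=\tfrac12(d-1+\sqrt{d^2-2d+5})<d\leq d'-1$, while every element of $\Lambda_{d',m'}$ exceeds $d'-1$ by the defining inequality in the case $m'=2$ and then by your interval description for larger $m'$. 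Add this one-line argument and the proof is complete.
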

\begin{proof}
	We prove the first two claims simultaneously by induction. For $m=1$ it is clear, and for $m=2$, the elements of $\Lambda_{d,2}$ indexed by $n_2$ form by Lemma~\ref{lemma: adding one parameter is smaller} an ascending chain of elements and thus $\Lambda_{d, 2}$ is well ordered with ordinal $\omega$. Now assume we know for some $m \in \{2,\ldots, 2d-2\}$ that the well ordering on $\Lambda_{d, m}$ given by the lexicographic ordering of the indexing set is the same as the one inherited from the real line and that $\Lambda_{d, m}$ has ordinal~$\omega^{m-1}$ with respect to this ordering. Define $J_m=\{(n_2, \ldots, n_m) \in \Z^{m-1} \thinspace | \thinspace \lambda_{d, (n_2, \ldots, n_{m}-1)}, \lambda_{d, (n_2, \ldots, n_m)} \in \Lambda_{d, m}\}$ and for each $(n_2, \ldots, n_m) \in J_m$, define the open interval $I_{(n_2, \ldots, n_m)} = (\lambda_{d, (n_2, \ldots, n_{m}-1)}, \lambda_{d, (n_2, \ldots, n_m)})$. For each $(n_2, \ldots, n_m) \in J_m$, there exists $n_{m+1}(\lambda_{d, (n_2, \ldots, n_{m}-1)}) > n_m$ such that for $n_{m+1} \geq n_{m+1}(\lambda_{d, (n_2, \ldots, n_{m}-1)})$, by Proposition~\ref{prop: limit of polynomials}, we know $\lambda_{d, (n_2, \ldots, n_{m+1})} \in I_{(n_2, \ldots, n_m)}$, and by Proposition~\ref{prop: increase one parameter, root is larger}, these values form a strictly increasing sequence with limit $\lambda_{d, (n_2, \ldots, n_m)}$. Thus, the sequence within $I_{(n_2, \ldots, n_m)}$ is well ordered, and as we can write $\Lambda_{d, m+1}$ as a union of such sequences, each contained in disjoint open intervals. Therefore, $\Lambda_{d, m+1}$ is well ordered in regard to the lexicographic ordering on its indexing set, too, and this ordering agrees again with the ordering on $\R$. Each sequence has ordinal~$\omega$, and by induction, $\Lambda_{d, m}$ has ordinal~$\omega^{m-1}$; this implies that $\Lambda_{d, m+1}$ has ordinal~$\omega^m$, as claimed. 
	
	As for the last claim, note that for a fixed $d\geq 4$, the $\Lambda_{d, m}$ with $m\in \{1, \ldots, 2d-1\}$ are disjoint by construction and by Lemma~\ref{lemma: adding one parameter is smaller}. For $4\leq d< d'$, note that all elements of $\Lambda_{d', m}$ for all $m \in \{1,\ldots, 2d'-1\}$ are larger than $d'-1$ by construction. Furthermore, all elements of $\Lambda_{d, m}$ with $m\in \{2,\ldots, 2d-1\}$ are smaller than $\tfrac{1}{2}(d-1+\sqrt{d^2-2d+5})$, which is smaller than $d\leq d'-1$ for $d\geq 4$. This proves the proposition.
\end{proof}

\begin{remark} \label{rem: Lambdas}
	Note that the sets $\Lambda_{d, m}$ do not contain any accumulation points in the open interval topology on $\R$. In fact, for any $m\in\{2,\ldots, 2d-1\}$, the accumulation points of the set $\Lambda_{d, m}$ lie in $\Lambda_{d, m-1}$ by Proposition~\ref{prop: limit of polynomials}, and the closure of $\Lambda_{d, m}$ is equal to $\bigsqcup_{1 \leq i \leq m}\Lambda_{d,i}$. 
\end{remark}

\section{Bounding from below} \label{section: bounding from below}

\subsection{Bounding from below using the Weyl group} \label{section: bounding from below, Weyl group}

We introduce the Weyl group in an brief way; for a more comprehensive introduction see~\cite{MR1066460}, \cite{MR1890629} or \cite{MR2354205}. The Weyl group can be defined as the Coxeter group $W_n=W(E_n)$ given by the Coxeter-Dynkin diagram $E_n$ on $n$ vertices as in Figure~\ref{figure: E_n}.

\begin{figure}[h]
	\begin{center}
		\begin{tikzpicture}
			\path 
			(0,1) node [style={draw,circle,minimum size=2mm,inner sep=0pt,outer sep=0pt,fill=black}, draw]{} node[above=1pt] {}
			(1,1) node [style={draw,circle,minimum size=2mm,inner sep=0pt,outer sep=0pt,fill=black}, draw]{} node[above=1pt] {}
			(2,1) node [style={draw,circle,minimum size=2mm,inner sep=0pt,outer sep=0pt,fill=black}, draw]{} node[above=1pt] {}
			(3,1) node [style={draw,circle,minimum size=2mm,inner sep=0pt,outer sep=0pt,fill=black}, draw]{} node[above=1pt] {}
			(4,1) node [style={draw,circle,minimum size=2mm,inner sep=0pt,outer sep=0pt,fill=black}, draw]{} node[above=1pt] {}
			(5,1) node [style={draw,circle,minimum size=2mm,inner sep=0pt,outer sep=0pt,fill=black}, draw] {} node[above=1pt] {}
			(2,0) node [style={draw,circle,minimum size=2mm,inner sep=0pt,outer sep=0pt,fill=black}, draw] {} node[below=1pt] {};
			\draw (0,1) -- (1,1) -- (2,1) -- (3,1);
			\draw [dashed] (3,1) -- (4,1);
			\draw (4,1) -- (5,1);
			\draw (2,1) -- (2,0);
		\end{tikzpicture}
		\caption{The graph $E_n$.} \label{figure: E_n}
	\end{center}
\end{figure}
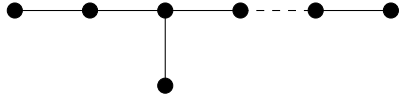

Alternatively, fix $n\geq 3$. Consider $\Z^{1,n}$ the lattice $\Z^{n+1}$ with inner product $x \cdot x =x_0^2-x_1^2-\cdots -x_n^2$ and standard basis $(e_0,e_1,\ldots, e_n)$. Define the \emph{Weyl group} $W_n$ as the subgroup of $O(\Z^{1,n})$ generated by the reflections $x \mapsto x+(x\cdot \alpha_i) \alpha_i$, where $\alpha_0=e_0-e_1-e_2-e_3$ and $\alpha_i=e_i-e_{i+1}$ for $i=1, \ldots, n-1$. There exists an isometry 
\begin{align*}
	\Z^{1,n} &\rightarrow \Pic(X), \quad
	e_0 \mapsto L, \quad
	e_i \mapsto E_i, \quad i=1, \ldots, n
\end{align*}
for a rational surface $X$ obtained as the blow-up of $\PP^2$ in $n$ points out of which we can pick four points where no three are collinear, and where $L$ is the pullback of a general line and $E_i$ are the exceptional curves. Via this isometry, $W_n$ is the subgroup of $\Aut(\Pic(X)) \cong \GL_{n+1}(\Z)$ generated by the permutation matrices with the upper left entry fixed as $1$, and by the matrix 
\begin{align*} A_0&=\left(\begin{array}{c|c}
		\begin{smallmatrix}
			2& \phantom{-}1& \phantom{-}1& \phantom{-}1\\ -1& \phantom{-}0 & -1 & -1\\ -1 & -1 & \phantom{-}0 & -1 \\ -1 & -1 & -1 & \phantom{-}0 \\ ~
		\end{smallmatrix} &   \\
		\hline  & \mathbbm{1}
	\end{array} \right).
\end{align*} 
Even more, Nagata~\cite{MR126444} proved that the image of $\Aut(X)$ under the homomorphism $\Aut(X)\rightarrow \Aut(\Pic(X))$ lies in $W_n$. Note that $W_n$ must not necessarily be equal to the subgroup consisting of elements preserving the intersection form and the anticanonical divisor; take for example the reflection determined by $3e_0 - \sum_{i=1}^{10}e_i + e_{11}$.

Define the \emph{Weyl spectrum} $\Lambda(W)$ as the set of all spectral radii of all elements of all $W_n$ with $n\geq 3$. By the following result of Uehara, rephrased to fit the framework of this article, the Weyl spectrum agrees with all possible dynamical degrees of rational surface automorphisms.

\begin{theorem}[{\cite{MR3477879}, Theorem~$1.1$}] \label{thm: uehara}
	If the ground field is $\C$, then the Weyl spectrum $\Lambda(W)$ agrees with \[\Lambda(\{f \in \Bir(\PP^2) \enspace | \enspace \exists \enspace \pi \colon X \rightarrow \PP^2\colon \pi^{-1}f\pi \in \Aut(X) \}).\]
\end{theorem}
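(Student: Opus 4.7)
The theorem asserts an equality of two sets, so I would split the proof into the two inclusions.

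The easy inclusion is that every dynamical degree of an automorphism of a rational surface lies in the Weyl spectrum. If $f \in \Aut(X)$ with $X$ a blow-up of $\PP^2$ at $n$ points in sufficiently general position, then $f^*$ acts on $\Pic(X) \cong \Z^{1,n}$ by an isometry of the intersection form preserving the canonical class. By Nagata's result recalled just above the statement, $f^*$ lies in $W_n$. Combined with the standard fact that for an automorphism of a projective surface the dynamical degree $\lambda(f)$ equals the spectral radius of $f^*$ on $\mathrm{NS}(X)_{\R}$ (this is immediate from the definition of $\lambda(f)$ together with $(f^n)^* = (f^*)^n$ on $\Pic(X)$, as there are no indeterminacies to spoil the functoriality), we obtain $\lambda(f) \in \Lambda(W)$. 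For the case in which $X$ has only a few blown-up points so that $n < 3$ or the configuration is not sufficiently generic, one checks separately that the spectral radius is $1$ and so contributes nothing not already in $\Lambda(W)$.

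The hard direction is the realization: given any $w \in W_n$ with spectral radius $\lambda$, we must exhibit a blow-up $\pi \colon X \to \PP^2$ and an $f \in \Aut(X)$ with $f^* = w$ on $\Pic(X)$. The strategy I would follow is the one initiated by McMullen~\cite{MR2354205}: first, encode the combinatorial data of $w$ by its action on the standard basis $(e_0, e_1, \dots, e_n)$, which, when $w$ has positive spectral radius and satisfies the appropriate reduction hypotheses, can be written as a composition of a permutation of the $e_i$'s with an elementary transformation coming from the image $[w(e_0)] \in \Pic$. This presents $w$ as the Picard action of a composition of a standard Cremona-type transformation with a permutation of blown-up points, reducing the problem to: find $n$ ordered points $p_1, \dots, p_n$ in $\PP^2$ whose orbit under this composition closes up to a permutation of the points themselves (so the birational self-map of $\PP^2$ lifts to a genuine automorphism of the blow-up).

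The main obstacle is precisely this orbit-closing condition, which is an $n$-parameter system of algebraic equations in the positions of the $p_i$. To make it solvable one restricts the points to lie on an anticanonical curve (a smooth or cuspidal cubic), so that the action of the candidate birational map on the cubic is controlled by its leading eigenvalue $\lambda$ via the group law on the smooth locus. The closing condition then becomes a single equation on the group $\Pic^0$ of the cubic, which over $\C$ is either an elliptic curve or $\mathbb{G}_m$ or $\mathbb{G}_a$; the existence of a solution is guaranteed by divisibility in these abelian groups combined with a careful count of parameters showing that we have enough freedom to match the required translation. I would expect the bulk of the technical work to consist of: (i) verifying that every Weyl group element with nontrivial spectral radius admits a presentation of the above form after pre- and post-composing with permutations (an exercise in Coxeter theory for $E_n$), and (ii) showing that the orbit-closing equation on the chosen cubic genuinely has a solution for every prescribed eigenvalue, which is where the algebraic closedness and characteristic-zero nature of $\C$ enters through a transcendence-degree/dimension argument.
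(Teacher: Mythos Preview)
The paper does not prove this statement at all: Theorem~\ref{thm: uehara} is quoted verbatim from Uehara~\cite{MR3477879} as an external input, and the paper explicitly remarks that ``the proof of Uehara's strong result is relatively long'' and therefore offers Section~\ref{section: bounding from below, realisations} as an alternative route to the main theorem that avoids invoking it. So there is no proof in the paper to compare your proposal against.

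That said, your sketch of the hard inclusion contains a genuine misreading of what has to be shown. You write that ``given any $w \in W_n$ with spectral radius $\lambda$, we must exhibit a blow-up $\pi \colon X \to \PP^2$ and an $f \in \Aut(X)$ with $f^* = w$ on $\Pic(X)$.'' This is strictly stronger than the theorem and is in fact false: McMullen~\cite{MR2354205} already identified obstructions (coming from periodic roots in the root system) that prevent certain $w$ from being realized geometrically by any automorphism. What Uehara proves is only that every value $\lambda \in \Lambda(W)$ is the spectral radius of \emph{some} realizable element, possibly a different one from the $w$ you started with, and possibly after enlarging $n$. The heart of his argument is precisely the combinatorial work of replacing an unrealizable $w$ by a realizable $w'$ with the same spectral radius; your outline skips over this entirely and instead proposes to solve a problem that has no solution in general. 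The anticanonical-cubic strategy you describe is indeed the realization mechanism, but it only succeeds once the Weyl element has been suitably adjusted.
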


With Uehara's result, we can infer the lower bound~$\omega^\omega$ on the ordinal of $\Lambda(\Bir(\PP^2))$ by showing it for $\Lambda(W)$.

\begin{lemma} \label{lemma: ordinal Weyl spectrum}
	The Weyl spectrum $\Lambda(W)$ is a well ordered subset of $\R$ and its ordinal is greater or equal to~$\omega^\omega$.
\end{lemma}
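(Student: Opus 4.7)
The plan splits into two parts: well-orderedness of $\Lambda(W)$, which is essentially free, and the ordinal lower bound, which carries the substantive content. For well-orderedness, Uehara's Theorem~\ref{thm: uehara} realises each element of $\Lambda(W)$ as the dynamical degree of an automorphism of some blow-up of $\PP^2_{\C}$, so $\Lambda(W) \subseteq \Lambda(\Bir(\PP^2_{\C}))$. The latter set is well-ordered by Theorem~$7.2$ of \cite{MR3454379} (as used already in the proof of Proposition~\ref{prop: order type for general variety with morphism to Bir}), and well-orderedness descends to subsets, settling the first claim.

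For the lower bound, the strategy is to realise each of the numbers $\lambda_{d, \underline{n}}$ of Section~\ref{section: arithmetic properties of auxiliary polynomials} as the spectral radius of an element of some $W_n$. For each $d \geq 4$ and each admissible tuple $\underline{n} = (n_2, \ldots, n_{2d-1})$ producing an element $\lambda_{d, \underline{n}} \in \Lambda_{d, 2d-1}$, I would take the matrix $J_d^{\underline{n}}$ from \eqref{eq: block matrix} as the candidate. Letting $n + 1$ be its size, Lemma~\ref{lemma: adapted intersection form} gives $H_{d, 2d-1} = 0$, hence $J_d^{\underline{n}} Q (J_d^{\underline{n}})^T = Q$, placing $J_d^{\underline{n}}$ in $O(\Z^{1,n})$. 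The main obstacle is to upgrade this to membership in $W_n$. The intended route is to write $J_d^{\underline{n}}$ explicitly as a product of the standard generators of $W_n$ (coordinate-fixing permutations together with the reflection $A_0$), guided by the block structure in \eqref{eq: block matrix}, and to verify preservation of the canonical class $-3e_0 + \sum_{i=1}^n e_i$ as a consistency check.

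Once $J_d^{\underline{n}} \in W_n$ is secured, Lemma~\ref{lemma: characteristic poly of all de Jonquieres} gives $\charpoly_{d, \underline{n}}(X) = (X-1) p_{d, \underline{n}}(X)$, and Proposition~\ref{prop: root real and strictly larger than 1} then identifies $\lambda_{d, \underline{n}}$ as the spectral radius of $J_d^{\underline{n}}$. Thus $\Lambda_{d, 2d-1} \subseteq \Lambda(W)$ for every $d \geq 4$. By Proposition~\ref{prop: ordinal of helping sets}, each $\Lambda_{d, 2d-1}$ has order type $\omega^{2d-2}$ and the sets $\Lambda_{d, 2d-1}$ for different $d$ sit in disjoint intervals of $\R$, stacked in increasing order of $d$. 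Their disjoint union $\bigsqcup_{d \geq 4} \Lambda_{d, 2d-1} \subseteq \Lambda(W)$ is therefore well-ordered with order type $\sup_{d \geq 4} \omega^{2d-2} = \omega^\omega$, yielding the desired lower bound.
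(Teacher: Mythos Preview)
Your proposal is correct and follows essentially the same approach as the paper: well-orderedness via the inclusion $\Lambda(W) \subseteq \Lambda(\Bir(\PP^2_{\C}))$ and \cite[Theorem~$7.2$]{MR3454379}, and the lower bound by placing each $J_d^{\underline{n}}$ (with $m=2d-1$) in $W_n$ through an explicit factorisation into permutations and copies of $A_0$, then invoking Proposition~\ref{prop: ordinal of helping sets}. The paper carries out exactly the computation you flag as the main obstacle---conjugating by suitable permutations and multiplying by $A_0$ lowers the top-left block from degree $d$ to $d-1$, and after $d-1$ iterations one is left with a permutation in $W_n$.
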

\begin{proof}
	The Weyl spectrum $\Lambda(W)$ is a subset of $\Lambda(\Bir(\PP^2)) \subset \R$, which is well ordered by \cite[{Theorem~$7.2$}]{MR3454379}, thus so is~$\Lambda(W)$. We claim that all the matrices $J_d^{\underline{n}}$ in~\eqref{eq: block matrix} with $m=2d-1$ belong to the respective Weyl group $W_n$ with $n=\sum_{i=2}^{2d-1}n_i+2$. Multiplying with suitable permutations $P_1$ and $P_1'$ lying in $W_n$ and $A_0$, we can bring $J_d^{\underline{n}}$ into the following form:
	\begin{align*}
		P_1'J_d^{\underline{n}}P_1A_0=\left(\begin{array}{c|c}
			\begin{smallmatrix}
				\phantom{-}d & \phantom{-}d-1 & \phantom{-}1 & \phantom{-}1\\ -(d-1)&-(d-2) & -1 & -1 \\-1 & -1 & -1 & \phantom{-}0 \\ -1 & -1 & \phantom{-}0 & -1 \\ ~
			\end{smallmatrix} & \ast \\ \hline \ast & \ast
		\end{array}\right)\left(\begin{array}{c|c}
			\begin{smallmatrix}
				\phantom{-}2& \phantom{-}1& \phantom{-}1& \phantom{-}1\\ -1& \phantom{-}0 & -1 & -1\\ -1 & -1 & \phantom{-}0 & -1 \\ -1 & -1 & -1 & \phantom{-}0 \\ ~
			\end{smallmatrix} &   \\
			\hline  & \mathbbm{1}
		\end{array} \right)=\left(\begin{array}{c|c}
			\begin{smallmatrix}
				\phantom{-}d-1 & \phantom{-}d-2 & \phantom{-}0 & \phantom{-}0\\ -(d-2)&-(d-3) & \phantom{-}0 & \phantom{-}0 \\ \phantom{-}0 & \phantom{-}0 & \phantom{-}0 & \phantom{-}1 \\ \phantom{-}0 & \phantom{-}0 & \phantom{-}1 & \phantom{-}0 \\ ~
			\end{smallmatrix} & \ast \\ \hline \ast & \ast
		\end{array}\right).
	\end{align*}
	As there are $2d-2$ columns of the form $P(1,-1,-1,0, \ldots, 0)^T$ with $P$ a permutation leaving the first entry fixed, we can repeat this process $(d-1)$-times and obtain that $P_{d-1}'\cdots P_1'J_d^{\underline{n}}P_1A_0 \cdots P_{d-1}A_0$ is a permutation matrix lying in $W_n$. Thus, $J_d^{\underline{n}}$ is indeed an element of $W_n$. 
	
	Therefore, by Lemma~\ref{lemma: characteristic poly of all de Jonquieres}, the disjoint sets $\Lambda_{d, 2d-1}$ defined in Proposition~\ref{prop: ordinal of helping sets} lie in $\Lambda(W)$ and thus $\bigsqcup_{d \geq 4} \Lambda_{d, 2d-1} \subset \Lambda(W)$. By Proposition~\ref{prop: ordinal of helping sets}, each $\Lambda_{d, 2d-1}$ for $d\geq 4$ is of ordinal $\omega^{2d-2}$, and thus their disjoint union is of ordinal~$\omega^\omega$. We deduce that the ordinal of $\Lambda(W)$ is also at least~$\omega^\omega$, which proves the lemma.
\end{proof}

\begin{theorem}\label{thm: main theorem, but with weyl group}
	If the base field is $\C$, then the ordinal of $\Lambda(\Bir(\PP^2))$ is greater or equal to $\omega^\omega$, and the ordinal of the Weyl spectrum $\Lambda(W)$ is equal to $\omega^\omega$.
\end{theorem}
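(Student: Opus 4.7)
The proof will be almost purely a bookkeeping exercise assembling results already in hand. The plan is to bound the ordinal of $\Lambda(W)$ simultaneously from above and below by $\omega^\omega$, and at the same time conclude that the ordinal of $\Lambda(\Bir(\PP^2))$ is at least $\omega^\omega$, which combined with Theorem~\ref{thm: bounding from above} yields exactly $\omega^\omega$ for the larger set too.

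First, for the lower bound on $\Lambda(\Bir(\PP^2))$, I would invoke Uehara's result (Theorem~\ref{thm: uehara}): over $\C$, the Weyl spectrum $\Lambda(W)$ is contained in the set of dynamical degrees of birational maps of $\PP^2$ that can be regularised to automorphisms on some blow-up, and hence is contained in $\Lambda(\Bir(\PP^2))$. The ordinal of a subset of a well ordered set is at most the ordinal of the ambient set, so Lemma~\ref{lemma: ordinal Weyl spectrum} immediately gives
\[
\omega^\omega \leq \mathrm{ord}(\Lambda(W)) \leq \mathrm{ord}(\Lambda(\Bir(\PP^2))).
\]

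Second, for the upper bound, Theorem~\ref{thm: bounding from above} gives $\mathrm{ord}(\Lambda(\Bir(\PP^2)))\leq\omega^\omega$, and again since the order type of a subset of a well ordered set cannot exceed the order type of the full set, $\mathrm{ord}(\Lambda(W))\leq\omega^\omega$.

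Combining these inequalities yields both claims at once: $\mathrm{ord}(\Lambda(\Bir(\PP^2)))=\omega^\omega$ and $\mathrm{ord}(\Lambda(W))=\omega^\omega$. There is no real obstacle here; the substance of the argument has been packaged into Lemma~\ref{lemma: ordinal Weyl spectrum} (which extracts the families $\Lambda_{d,2d-1}$ from Proposition~\ref{prop: ordinal of helping sets} and realises them as spectral radii of elements of $W_n$) and into the quoted results of Uehara and of Blanc--Cantat. The only minor thing worth stating carefully is the general fact that a well ordered subset of a well ordered set has ordinal at most that of the ambient set, which is standard.
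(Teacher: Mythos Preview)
Your proposal is correct and follows essentially the same approach as the paper: invoke Uehara's theorem to obtain $\Lambda(W)\subset\Lambda(\Bir(\PP^2))$, use Lemma~\ref{lemma: ordinal Weyl spectrum} for the lower bound $\omega^\omega\leq\mathrm{ord}(\Lambda(W))$, and use Theorem~\ref{thm: bounding from above} for the upper bound. The only difference is cosmetic---you additionally record the equality $\mathrm{ord}(\Lambda(\Bir(\PP^2)))=\omega^\omega$, which the paper defers to the statement of Theorem~\ref{thm: main theorem}.
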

\begin{proof}
	Since by Theorem~\ref{thm: uehara}, we have the inclusion $\Lambda(W) \subset \Lambda(\Bir(\PP^2))$, and thus by Lemma~\ref{lemma: ordinal Weyl spectrum} the ordinal of $\Lambda(\Bir(\PP^2))$ is also greater or equal to $\omega^\omega$. This proves the first claim. The second claim follows by applying Theorem~\ref{thm: bounding from above}.
\end{proof}

Proposition~\ref{thm: main theorem, but with weyl group} together with Theorem~\ref{thm: bounding from above} imply Theorem~\ref{thm: main theorem}, and the second claim of Theorem~\ref{thm: main theorem, but with weyl group} is precisely Theorem~\ref{thm: main theorem for Weyl spectrum}.

\subsection{Bounding from below using explicit realisations} \label{section: bounding from below, realisations}

To prove that the ordinal of $\Lambda(\Bir(\PP^2))$ can be bounded from below by $\omega^\omega$ without appealing to Uehara's result in~\cite{MR3477879}, we prove the claim that for any $d \geq 4$, the ordinal of $\Lambda(\Bir_d(\PP^2))$ is strictly larger than $\omega^{2d-2}$. Knowing the latter, we can deduce the former by taking the union over all $\Lambda(\Bir_d(\PP^2))$. To show this claim, we construct suitable sequences of birational maps having dynamical degrees that depend on $2d-2$ independent parameters, and where the dynamical degree increases strictly whenever we increase one of the parameters. The birational maps we choose are \emph{de Jonqui\`eres maps}, which send the pencil of lines through a given point to the pencil of lines through another fixed point. We first show that their dynamical degrees indeed have the desired properties, and then prove that the maps also exist, in a sense working backwards. The existence will be easier to establish by choosing the base points to lie on an irreducible cuspidal curve. Note that in this section, for the constructions to work, the field of definition needs to contain the field of real algebraic integers; this implies further that $\mathbf{k}$ is of characteristic $0$.

\subsubsection{\textbf{Arithmetic properties of the de Jonqui\`eres matrix}} \label{subsection: arithmetic}

We first introduce some general notation and results, where now we can borrow heavily from Section~\ref{section: arithmetic properties of auxiliary polynomials}. Consider a birational map $f \in \Bir(\PP^2)$. Write $\Base(f)=\{p_1, \ldots, p_m\}$ for the base points of $f$ and $\Base(f^{-1})=\{q_1, \ldots, q_m\}$ for the base points of its inverse. Assume that they are all proper, which by definition means that none of the base points are infinitely near to one another, thus no tangent directions being fixed at any of the points. Suppose that for $1 \leq i \leq m$, we have $n_i \in \Z^{\geq 0}$ such that $f^{n_i-1}(q_i)=p_i$ and assume that the points $f^s(q_t)$ for $1 \leq t \leq m$ and $0 \leq s \leq n_t-1$ are pairwise distinct. Call the tuple $(n_1, \ldots, n_m)$ the \emph{orbit data of $f$}, the set $O(q_t)=\{f^s(q_t) \thinspace | \thinspace 0 \leq s \leq n_t-1 \}$ the \emph{orbit of $q_t$} and $O(f)=\bigcup_{1 \leq t \leq m}O(q_t)$ the \emph{orbit of $f$}. 

Note that if we blow up the full orbit $O(f)$, then $f$ lifts to an automorphism on the blow-up. This automorphism induces an automorphism of the Picard group via pushforward, i.e. $f_\ast:[D] \mapsto [f(D)]$, which can be represented by an integer-valued matrix after fixing a basis $L, \{E(p)\}_{p \in O(f)}$. Of this matrix, which we will denote by $F$, we know that the spectral radius $\rho(F)$ is equal to $\lambda(f)$; in addition, since $f$ preserves the canonical divisor, one eigenvalue of $F$ must be $1$. Also, call $L$ the pullback of a general line in $\PP^2$ not passing through any of the points of the full orbit, and any exceptional curve $E(p)$ above a point $p \in O(f)$. Setting
\begin{align} Q=\left(\begin{smallmatrix}
		1 & & & \\
		& -1 & & \\
		& & \ddots & \\
		& & & -1
	\end{smallmatrix} \right),\label{eq: Q}\tag{$\clubsuit$}
\end{align} 
the fact that $f$ preserves the intersection form translates to $FQF^T=Q$, where $Q$ is the matrix given in \eqref{eq: Q} and $F^T$ denotes the transpose of~$F$.

We would now like to find such birational maps whose dynamical degrees are strictly greater than $1$ and increase in a strict way whenever we increase one of the $n_i$, for which we consider the matrices we obtain from de Jonqui\`eres maps. Recall that a \emph{de Jonqui\`eres map} of degree~$d$ is a map $j_d \in \Bir(\PP^2)$ with base point $p_1$ of multiplicity~$d-1$ and base points $p_2, \ldots, p_{2d-1}$ of multiplicity~$1$. Call the base points of the inverse $q_1, \ldots, q_{2d-1}$, where $q_1$ is the unique one of multiplicity~$d-1$ and all others have multiplicity~$1$. Write $j_d^{\underline{n}}$ for a de Jonqui\`eres map with orbit data $\underline{n}=(n_1, \ldots, n_{2d-1})$ and $\lambda_{d,\underline{n}}$ for its dynamical degree. Also, since it will make computations easier, we set $n_1=2$ and abbreviate $\underline{n}=(n_2, \ldots, n_{2d-1})$. Yet for our purposes, this restriction will not impact what we set out to do, which is to make a statement on the ordinal. Thus, these birational maps will depend on $2d-2$ parameters. 

A priori, we do not know if these de Jonqui\`eres maps $j_d^{\underline{n}}$ can be realised; we will first show arithmetically that their corresponding matrices satisfy all the desired properties, and then prove that under suitable circumstances, they are indeed realised as birational maps of $\PP^2$. Calling the -- so far hypothetical -- points $q_{i}^0=q_i$ and $q_{i}^{k}=(j_d^{\underline{n}})^k(q_i)$ for $1\leq k \leq n_i-1$, and their exceptional curves $E(q_{i}^k)$ in some blow-up $X$, we order the basis of $\Pic(X)$ in which to write $J_d^{\underline{n}}$ as $L, E(q_1^0)=E(q_1), E(q_1^1)=E(p_1), E(q_2^0)=E(q_2), \ldots, E(q_2^{n_2-1})=E(p_2),  \ldots, E(q_{2d-1}^{n_{2d-1}-1})=E(p_{2d-1})$, where $L$ is a general line. In this basis, the matrix corresponding to $j_d^{\underline{n}}$ takes precisely the for of $J_d^{\underline{n}}$ defined in~\eqref{eq: block matrix} for $m=2d-1$. We call such a $J_d^{\underline{n}}$ a \emph{de Jonqui\`eres matrix.} We may thus summarise the arithmetic properties of the de Jonqui\`eres matrices gleaned from Section~\ref{section: arithmetic properties of auxiliary polynomials}:

\begin{proposition} \label{prop: properties of jonquieres}
	The de Jonqui\`eres matrices with $d\geq 4$ and $n_2, \ldots, n_{2d-1} \geq 2$ have the following properties:
	\begin{enumerate}[leftmargin=*]
		\item The spectral radius of $J_d^{\underline{n}}$ is a real eigenvalue of $J_d^{\underline{n}}$ and strictly greater than $2$. \label{item: properties of jonquieres, 1}
		\item Increasing an orbit $n_i$ by one increases the spectral radius of the corresponding de Jonqui\`eres matrix strictly. \label{item: properties of jonquieres, 2}
	\end{enumerate}
\end{proposition}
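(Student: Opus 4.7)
The proposition is essentially a repackaging of the arithmetic results already assembled in Section~\ref{section: arithmetic properties of auxiliary polynomials}, so my plan is to do little more than chain them together correctly and check the one extra point (that the spectral radius is indeed attained by the distinguished real root, and not by the extra eigenvalue $1$).

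For item~\eqref{item: properties of jonquieres, 1}, the plan is to start from Lemma~\ref{lemma: characteristic poly of all de Jonquieres}, which gives the factorisation $\charpoly_{d,\underline{n}}(X)=(X-1)p_{d,\underline{n}}(X)$. Thus the eigenvalues of $J_d^{\underline{n}}$ are $1$ together with the roots of $p_{d,\underline{n}}(X)$. Proposition~\ref{prop: root real and strictly larger than 1} tells me that among the roots of $p_{d,\underline{n}}(X)$ there is at most one of modulus strictly greater than $1$, and that if it exists it is real; by Definition~\ref{def: largest real roots of polynomials} this is $\lambda_{d,\underline{n}}$. Next I invoke Proposition~\ref{prop: greater than 2}, which under the hypotheses $d \geq 4$ and $n_i \geq 2$ yields $\lambda_{d,\underline{n}} > 2 > 1$. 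Since every other eigenvalue of $J_d^{\underline{n}}$ has modulus $\leq 1$, the spectral radius of $J_d^{\underline{n}}$ is exactly $\lambda_{d,\underline{n}}$, which is real and strictly greater than $2$, as required.

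For item~\eqref{item: properties of jonquieres, 2}, by the same identification of the spectral radius with $\lambda_{d,\underline{n}}$, the statement is equivalent to $\lambda_{d,\underline{n}} < \lambda_{d,\underline{n}_i}$, where $\underline{n}_i$ denotes the tuple obtained from $\underline{n}$ by increasing $n_i$ by one. This is exactly the content of Proposition~\ref{prop: increase one parameter, root is larger}, whose hypotheses $d \geq 4$ and $n_j \geq 2$ for all $j$ are precisely the assumptions of the current proposition. Thus nothing further needs to be done.

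The only potential subtlety, and the one step worth double-checking in writing up the proof, is to make sure that in item~\eqref{item: properties of jonquieres, 1} the eigenvalue $1$ coming from the $(X-1)$ factor is not accidentally the largest eigenvalue; but this is ruled out as soon as one knows $\lambda_{d,\underline{n}} > 2$, so there is no real obstacle here.
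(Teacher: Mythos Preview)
Your proposal is correct and follows exactly the same route as the paper, which simply cites Proposition~\ref{prop: greater than 2} for item~\ref{item: properties of jonquieres, 1} and Proposition~\ref{prop: increase one parameter, root is larger} for item~\ref{item: properties of jonquieres, 2}. Your write-up is in fact a bit more explicit than the paper's, since you spell out via Lemma~\ref{lemma: characteristic poly of all de Jonquieres} and Proposition~\ref{prop: root real and strictly larger than 1} why the spectral radius of $J_d^{\underline{n}}$ coincides with $\lambda_{d,\underline{n}}$ rather than the extra eigenvalue~$1$; this is a helpful clarification and introduces no error.
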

\begin{proof}
	Claim~\ref{item: properties of jonquieres, 1} follows from Proposition~\ref{prop: greater than 2}, and Claim~\ref{item: properties of jonquieres, 2} follows from Proposition~\ref{prop: increase one parameter, root is larger}.
\end{proof}

\subsubsection{\textbf{Existence of de Jonqui\`eres maps}} \label{subsection: existence of jonquieres maps}

Now that we know the de Jonqui\`eres maps to be suitable candidates, we want to see that they are realised as actual birational maps of $\PP^2$. For this, we consider the singular cuspidal cubic $C = V(xy^2-z^3)$ in $\mathbb{P}^2$ with unique singular point $[1:0:0]$, and set $C^\ast =C \setminus \{[1:0:0]\}$ the set of smooth points. The isomorphism $\A^1 \overset{\sim}{\rightarrow} C^\ast$, $t \mapsto [t^3:1:t]$ allows to parametrise the smooth points of $C$ with $\A^1$ and endow $C^\ast$ with a group structure, for which $t_1+t_2+t_3=0$ if and only if the points $t_1, t_2, t_3$ on $C^\ast$ are collinear. We can say even more, and even though the following lemma is well-known, we reproduce it for completeness' sake:

\begin{lemma} \label{lemma: on curve of degree d, sum is zero}
	If for $3d \geq 3$ not necessarily distinct points $t_i \in C^\ast$, there exists a polynomial $P$ of degree $d$ whose restriction to $C$ is not zero and such that for $p_i=[t_i^3:1:t_i]$ we have $P|_C=p_1+\cdots+p_{3d}$, then $\sum_{i=1}^{3d}t_i=0$.
\end{lemma}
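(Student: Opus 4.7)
The plan is to pull $P$ back along the parametrisation $\phi \colon \A^1 \to C^\ast$, $t \mapsto [t^3:1:t]$, producing a one-variable polynomial $f(t) := P(t^3,1,t) \in \mathbf{k}[t]$, and to read the identity $\sum t_i = 0$ directly from Vieta's formulas applied to $f$.

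First I would expand $P$ in the monomial basis. Writing $P=\sum_{i+j+k=d} a_{ijk}\,x^i y^j z^k$ yields $f(t)=\sum_{i+j+k=d} a_{ijk}\,t^{3i+k}$. On the simplex $\{i+j+k=d,\ i,j,k\geq 0\}$ the linear form $3i+k$ attains its maximum $3d$ uniquely at $(i,j,k)=(d,0,0)$, so $\deg f \leq 3d$ and the leading coefficient is $a_{d00}$. The next value $3d-1$ would force $k=3d-1-3i$ and $j=2i-2d+1$; nonnegativity of $k$ gives $i\leq d-1$ while nonnegativity of $j$ forces $i\geq d$, which is impossible. Hence the coefficient of $t^{3d-1}$ in $f$ vanishes identically, irrespective of the $a_{ijk}$.

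To conclude, I would match the zero-set of $f$ with the $t_i$. Since $\phi$ is an isomorphism onto $C^\ast$, the intersection multiplicity of $V(P)$ with $C$ at $\phi(t_0)$ equals the order of vanishing of $f$ at $t_0$. The hypothesis $P|_C \neq 0$ gives $f\not\equiv 0$, and $P|_C = p_1+\cdots+p_{3d}$ says that the full B\'ezout intersection divisor (of degree $3d$) is supported on $C^\ast$ with $p_i = \phi(t_i)$. Consequently $f$ has $3d$ zeros counted with multiplicity, namely the $t_i$. Combined with $\deg f \leq 3d$ this forces $\deg f = 3d$, so $a_{d00}\neq 0$ and
\[
f(t)=a_{d00}\prod_{i=1}^{3d}(t-t_i).
\]
Comparing the coefficient of $t^{3d-1}$ in this factorisation, namely $-a_{d00}\sum_i t_i$, with the vanishing established in the previous paragraph yields $\sum_{i=1}^{3d} t_i = 0$.

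The only delicate point in the argument is to ensure that no intersection mass is hidden at the singular point $[1:0:0]$, which lies outside the image of $\phi$ and would shorten $\deg f$ below $3d$. This is precisely ruled out by the hypothesis $P|_C = p_1+\cdots+p_{3d}$ with the $p_i$ on $C^\ast$; once this is in place, the computation above is essentially mechanical.
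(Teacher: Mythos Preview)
Your proof is correct and follows essentially the same route as the paper: pull back along $t\mapsto[t^3:1:t]$, factor the resulting one-variable polynomial as $a\prod(t-t_i)$, and observe that the $t^{3d-1}$ coefficient must vanish. You give a more explicit combinatorial justification of the latter step (analysing which $(i,j,k)$ could yield exponent $3d-1$), whereas the paper just remarks that the substitution $X,Y,Z\mapsto t^3,1,t$ into a degree-$d$ homogeneous polynomial cannot produce $t^{3d-1}$; these are the same observation.
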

\begin{proof}
	Suppose we have $3d$ points $p_i=[t_i^3 : 1:t_i]$ with $t_i\in C^\ast$ such that $P|_C=p_1+\cdots+p_{3d}$ for a polynomial $P\in \C[X,Y,Z]$ of degree $d$ not a multiple of the defining equation of $C$. As $P(t^3,1,t)\in C[t]$ is a polynomial of degree at most $3d$ and $P|_C=p_1+\cdots+p_{3d}$, we must have $P(t^3,1,t)=a\prod_{i=1}^{3d}(t-t_i)$ for some $a\in \C^\ast$. The coefficient of $t^{3d-1}$ has to be zero, as $P$ is homogeneous and we replaced $X,Y,Z$ by $t^3,1,t$, respectively. But this coefficient is precisely equal to $-a\sum_{i=1}^{3d}t_i=0$, which finishes the proof.
\end{proof}

For a fixed orbit data $\underline{n}$, we will choose distinct points lying on the singular cuspidal cubic $C$. Then, after blowing up these points, the automorphism group of the blow-up will lie within the Weyl group, of which the de Jonqui\`eres matrix $J_d^{\underline{n}}$ is a part of. Lastly, we will prove that $J^{\underline{n}}_d$ gives rise to an automorphism of the blow-up, thus implying the existence of a de Jonqui\`eres map $j^{\underline{n}}_d$.
This line of analysis has already been pursued by McMullen~\cite{MR2354205} or Uehara~\cite{MR3477879}, and we will recall the main tools for it.

\begin{lemma} \label{lemma: kernel and eigenvector, arithmetic}
	Consider any matrix $F \in \GL_{n+1}(\Z)$ satisfying $FQF^T=Q$, where $Q$ is defined as in \eqref{eq: Q}, and $F(3,1, \ldots, 1)^T=(3,1,\ldots, 1)^T$. Fix $a \in \C\setminus \{1\}$ and $b,v_1,\ldots, v_n \in \C$. If the vector $(b,v_1, \ldots, v_n)^T$ is an eigenvector of $F^T$ with respect to $a$, then we have the following system of equations:
	\begin{align}
		3b&=F_{21}\frac{1}{a-1}(3v_1-b)+\cdots +F_{n+1,1}\frac{1}{a-1}(3v_n-b), \label{eq: kernel and eigenvector, 1}\tag{$\heartsuit$} \\
		b&=F_{2,i+1}\frac{1}{a-1}(3v_1-b)+ \cdots + (F_{i+1,i+1}-a)\frac{1}{a-1}(3v_i-b)+\cdots +F_{n+1,i+1}\frac{1}{a-1}(3v_n-b). \label{eq: kernel and eigenvector, 2}\tag{$\spadesuit$}
	\end{align}
\end{lemma}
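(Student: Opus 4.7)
The identities $(\heartsuit)$ and $(\spadesuit)$ are purely linear-algebraic consequences of the eigenvector relation $F^T v = av$, combined with explicit column-sum formulas for $F$ that follow from the two hypotheses $FQF^T=Q$ and $F(3,1,\ldots,1)^T=(3,1,\ldots,1)^T$. My plan is to extract these column sums first and then assemble each identity in a short direct computation.

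\emph{Step 1 (column sums of $F$).} Since $F\in\GL_{n+1}(\Z)$, the relation $FQF^T=Q$ rearranges to $F^{-1}=QF^TQ$. Transporting the invariance of $u=(3,1,\ldots,1)^T$ through this identity shows that $F^T$ preserves $Qu=(3,-1,\ldots,-1)^T$. Reading $F^T(Qu)=Qu$ coordinate by coordinate, and using that row $j$ of $F^T$ is column $j$ of $F$, gives explicit formulas expressing each column sum $\sum_{k=2}^{n+1} F_{k,j}$ as an affine function of the top entry $F_{1,j}$; concretely,
\[
\sum_{k=2}^{n+1} F_{k,1} \;=\; 3(F_{11}-1), \qquad \sum_{k=2}^{n+1} F_{k,i+1} \;=\; 3F_{1,i+1}+1 \quad (i=1,\ldots,n).
\]

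\emph{Step 2 (componentwise eigenvector equation).} From $F^Tv=av$, the first coordinate reads $F_{11}b+\sum_{k\geq 2}F_{k,1}v_{k-1}=ab$, hence $\sum_{k\geq 2}F_{k,1}v_{k-1}=(a-F_{11})b$. The $(i+1)$-th coordinate reads $F_{1,i+1}b+\sum_{k\geq 2}F_{k,i+1}v_{k-1}=av_i$, hence $\sum_{k\geq 2}F_{k,i+1}v_{k-1}=av_i-F_{1,i+1}b$.

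\emph{Step 3 (assembly).} For $(\heartsuit)$, compute $\sum_{k=2}^{n+1}F_{k,1}(3v_{k-1}-b)$ by multiplying the first coordinate of Step 2 by $3$ and subtracting $b$ times the column sum from Step 1; the $F_{11}$ contributions are engineered by Step 1 to cancel, and the right-hand side collapses to $3b(a-1)$. Dividing by $a-1\neq 0$ yields $(\heartsuit)$. For $(\spadesuit)$, carry out the same combination with the $(i+1)$-th coordinate: the sum $\sum_{k=2}^{n+1}F_{k,i+1}(3v_{k-1}-b)$ reduces to $3av_i-b$ (the $F_{1,i+1}b$ terms cancel between Step 1 and Step 2). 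Now peel off the $k=i+1$ summand and use the identity
\[
(F_{i+1,i+1}-a)(3v_i-b) \;=\; F_{i+1,i+1}(3v_i-b)-a(3v_i-b)
\]
to rewrite the left-hand side in the form demanded by $(\spadesuit)$; the total value becomes $(3av_i-b)-a(3v_i-b)=b(a-1)$, and dividing by $a-1$ finishes.

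\emph{Main obstacle.} There is no conceptual content beyond bookkeeping signs and indices. The only ingenious point is Step 1: the hypothesis $FQF^T=Q$ serves precisely to convert the ``row''-type invariance $Fu=u$ into the ``column''-type invariance $F^T(Qu)=Qu$, and the resulting affine formulas for the column sums are exactly what is needed to cancel the spurious $F_{11}b$ and $F_{1,i+1}b$ terms produced by the eigenvector equation.
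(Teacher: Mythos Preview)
Your overall strategy---extract column-sum identities for $F$, read off the componentwise eigenvector equations, and combine---is exactly what the paper does. However, the cancellation you assert in Step~3 does not actually occur with the column sums you wrote down in Step~1. With $\sum_{k\ge 2}F_{k,1}=3(F_{11}-1)$ you get
\[
\sum_{k\ge 2}F_{k,1}(3v_{k-1}-b)=3(a-F_{11})b-3(F_{11}-1)b=3(a+1)b-6F_{11}b,
\]
so the $F_{11}$-terms add rather than cancel; likewise the $F_{1,i+1}$-terms in the second computation double up. Thus neither $(\heartsuit)$ nor $(\spadesuit)$ follows.

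The discrepancy is a sign issue in the hypothesis itself. What the application actually needs (and what the paper's proof uses, writing ``since $(3,1,\ldots,1)F=(3,1,\ldots,1)$'') is the anticanonical invariance $F(3,-1,\ldots,-1)^T=(3,-1,\ldots,-1)^T$, i.e.\ $F(Qu)=Qu$. Feeding \emph{that} through $F^{-1}=QF^TQ$ yields $F^Tu=u$, hence $\sum_{k\ge 2}F_{k,1}=3-3F_{11}$ and $\sum_{k\ge 2}F_{k,i+1}=1-3F_{1,i+1}$, the opposite signs from your Step~1. With these, your Step~3 goes through verbatim: the first combination collapses to $3(a-1)b$ and the second to $3av_i-b$, after which your peel-off gives $(a-1)b$. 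So your argument is correct once the hypothesis is read as $F(Qu)=Qu$ rather than $Fu=u$; the lemma as stated carries a typo, and you propagated it faithfully into Step~1 but then (silently) used the other sign in Step~3.
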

\begin{proof}
	Assume that the vector $(b,v_1, \ldots, v_n)^T$ is an eigenvector of $F^T$ with respect to the eigenvalue $a$. Then $(b,v_1, \ldots, v_n)^T$ satisfies
	\begin{align*}
		(F_{11}-a)b+F_{21}v_1+\cdots +F_{n+1,1}v_n&=0, \\
		F_{1,i+1}b+F_{2,i+1}v_1+ \cdots + (F_{i+1,i+1}-a)v_i+\cdots +F_{n+1,i+1}v_n&=0, \quad 1 \leq i \leq n.
	\end{align*}
	By multiplying each equation by $3$, and subtracting and adding $\sum_{j=2}^{n+1}F_{j1}b$ from the first equation and $\sum_{j=2}^{n+1}F_{j,i+1}b$ from the remaining $n$ equations, the equations imply
	\begin{align*}
		3(F_{11}-a)b+F_{21}(3v_1-b)+\cdots +F_{n+1,1}(3v_n-b)+\sum_{j=2}^{n+1}F_{j1}b&=0, \\
		(3F_{1,i+1}+\sum_{j=2}^{n+1}F_{j,i+1}-a)b+F_{2,i+1}(3v_1-b)+ \cdots + (F_{i+1,i+1}-a) (3v_i-b)+\cdots +F_{n+1,i+1}(3v_n-b)&=0, 
	\end{align*}
	where $1\leq i \leq n$. By assumption, since $(3,1,\ldots, 1)F=(3,1,\ldots, 1)$, we have $3F_{11}+\sum_{j=2}^{n+1}F_{j1}=3$ and $3F_{1,i+1}+\sum_{j =2}^{n+1}F_{j,i+1}=1$, and thus find
	\begin{align*}
		3(1-a)b+F_{21}(3v_1-b)+\cdots +F_{n+1,1}(3v_n-b)&=0, \\
		(1-a)b+F_{2,i+1}(3v_1-b)+ \cdots + (F_{i+1,i+1}-a) (3v_i-b)+\cdots +F_{n+1,i+1}(3v_n-b)&=0, 
	\end{align*}
	which after dividing by $(a-1)$ implies \eqref{eq: kernel and eigenvector, 1} and \eqref{eq: kernel and eigenvector, 2}.
\end{proof}

Using this arithmetic property of Lemma~\ref{lemma: kernel and eigenvector, arithmetic}, we can prove the following geometric statement.

\begin{proposition} \label{prop: eigenvector implies existence on cuspidal cubic}
	Let $C \subset \PP^2$ be a singular cuspidal cubic given by $C=V(xy^2-z^3)$, fix $n\geq 10$ and let $p_1, \ldots, p_n \in C^\ast$ be distinct points. Call $\pi \colon X \rightarrow \PP^2$ the blow-up of $\PP^2$ in the points $p_1, \ldots, p_n$. Denote by $E(p_i)=\pi^{-1}(p_i)$ the exceptional curves. Consider $F \in \Aut(\Pic(X)) \setminus \{\id\}$, and call its corresponding matrix $F$, too. Suppose that $F$ sends the anticanonical divisor $-K_X$ to itself and that it preserves the intersection form. Furthermore, assume that for each $i\in \{1,\ldots, n\}$ there exists an irreducible curve $C_i$ of $X$ in the class of $F(E(p_i))$. Fix an eigenvalue $a \in \C\setminus \{1\}$ of $F$ and consider an eigenvector $(b,v_1, \ldots, v_n)^T$ of $F^T$. If $p_i=\frac{1}{a-1}(3v_i-b)$, then there exists an automorphism $f \in \Aut(X)$ such that the induced action on $\Pic(X)$ is equal to $F$, and the restriction of $f$ to the strict transform $\widetilde{C}$ of $C$ induces an automorphism of $C^\ast\cong \A^1$ given by $z \mapsto az +b$.
\end{proposition}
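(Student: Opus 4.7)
The plan is to produce the automorphism $f$ by explicitly writing down a birational transformation $g : \PP^2 \dashrightarrow \PP^2$ whose lift to $X$ realises $F$ on $\Pic(X)$ and which restricts to $\phi(z) = az+b$ on the cuspidal cubic, then upgrading that lift to a biregular automorphism.

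First I would read off the numerical invariants of $g$ from $F$: its degree $d$ and the multiplicities $m_i$ of its base points at $p_i$ are recorded by the image $F(L) = dL - \sum_i m_i E(p_i)$. The assumptions $F(-K_X) = -K_X$ and $FQF^T = Q$ translate into the Noether identities $3d - \sum_i m_i = 3$ and $d^2 - \sum_i m_i^2 = 1$, which are exactly the numerical conditions for the linear system $\mathcal{L}$ of plane curves of degree $d$ with multiplicity $\geq m_i$ at each $p_i$ to be of expected projective dimension $2$ and to define a birational self-map of $\PP^2$.

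The central step is to deduce, from the eigenvector hypothesis, that $\mathcal{L}$ is actually non-empty of the expected dimension, with base-point orbits on $C$ matching what $F$ requires. Under the identification $C^\ast \cong \A^1$, substituting $3v_i - b = (a-1)p_i$ into equations~\eqref{eq: kernel and eigenvector, 1} and~\eqref{eq: kernel and eigenvector, 2} of Lemma~\ref{lemma: kernel and eigenvector, arithmetic} rewrites them as linear combinations of the $p_i$ summing to $0$ in the group law of $C^\ast$, up to shifts that correspond exactly to the contribution of the $L$-entry in each column of $F$. By Lemma~\ref{lemma: on curve of degree d, sum is zero}, each such identity certifies the existence of a plane curve of degree $d$ whose cycle-theoretic intersection with $C$ is $\sum_i m_i p_i$ together with residual points at the images $\phi(p_j)$ dictated by the relevant column of $F$. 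Running this argument column by column of $F$ produces a $2$-parameter family of members of $\mathcal{L}$, and hence $\mathcal{L}$ defines a birational map $g : \PP^2 \dashrightarrow \PP^2$ with the desired action on $C$.

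Finally, the lift $\tilde g : X \dashrightarrow X$ satisfies $\tilde g_\ast = F$ and $\tilde g|_{\widetilde C} = \phi$ by construction. To see that $\tilde g$ is a morphism rather than a proper birational map, I would argue that any indeterminacy point of $\tilde g$ would force $F$ to send some exceptional class to a class whose only effective representatives are reducible; the hypothesis that every $F(E(p_i))$ is represented by an irreducible curve $C_i$ excludes this. Setting $f = \tilde g$ gives the claimed automorphism. The main obstacle I anticipate is the arithmetic bookkeeping in the third paragraph: one has to carefully track signs, the affine translation by $b$, and the way in which the $(i+1)$-th column of $F$ splits into an $L$-part and an exceptional part, so that the equations of Lemma~\ref{lemma: kernel and eigenvector, arithmetic} translate column by column into the sum-to-zero identities on $C^\ast$ supplied by Lemma~\ref{lemma: on curve of degree d, sum is zero}.
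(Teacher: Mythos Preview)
Your overall shape --- construct a birational $g:\PP^2\dashrightarrow\PP^2$ from the linear system $|F(L)|$, then lift --- is a reasonable strategy, but there are two genuine gaps.

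First, you invoke Lemma~\ref{lemma: on curve of degree d, sum is zero} in the wrong direction. That lemma says: \emph{if} a degree-$d$ curve cuts $C$ in $p_1+\cdots+p_{3d}$, \emph{then} $\sum t_i=0$. You need the converse --- from a sum-to-zero identity on $C^\ast$ to the existence of a plane curve with prescribed trace on $C$. The converse is true (it is Abel's theorem for the cuspidal cubic together with surjectivity of $H^0(\PP^2,\mathcal{O}(d))\to H^0(C,\mathcal{O}_C(d))$), but it is not what the paper states, and you would have to supply it.

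Second, the ``column by column'' step does not produce what you claim. The $(i+1)$-th column of $F$ encodes $F(E(p_i))$, not a member of $\mathcal{L}=|F(L)|$; so running through equations~\eqref{eq: kernel and eigenvector, 2} would at best reconstruct the curves $C_i$ (which you already assumed exist), not a $2$-parameter family in $\mathcal{L}$. To get $\dim\mathcal{L}=2$ you would instead need to argue via the residual trace on $C$, and then still identify the base locus of $g^{-1}$ and show it is exactly $\{p_1,\dots,p_n\}$ --- which is where the work actually lies. Your biregularity argument at the end is also too quick: knowing that each $F(E(p_i))$ is represented by an irreducible curve does not by itself rule out indeterminacy of the lift of $g$; you have not shown the base points of $g^{-1}$ are the $p_i$. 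Finally, you never use $n\geq 10$.

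The paper avoids all of this by going the other way around: instead of building $g$ from a linear system, it uses the irreducible $(-1)$-curves $C_i$ to define a second contraction $\eta:X\to\PP^2$ directly, normalises so that $\eta(\widetilde{C})=C$ and $\pi\circ\eta^{-1}|_{C^\ast}$ has slope $a$, and then uses Lemma~\ref{lemma: kernel and eigenvector, arithmetic} together with Lemma~\ref{lemma: on curve of degree d, sum is zero} (in the direction it is actually stated) to compute that the translation term equals $b$ and that $\eta(C_i)=p_i$. Once $\eta$ and $\pi$ contract to the same points, $\pi\circ\eta^{-1}$ automatically lifts to an automorphism of $X$. The hypothesis $n\geq 10$ is what forces $\widetilde{C}$ to be the unique effective anticanonical curve, so that $\eta(\widetilde{C})$ is again a cuspidal cubic.
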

\begin{proof}
	By assumption, there are irreducible curves $C_i$ linearly equivalent to $F(E(p_i))$. As $E(p_i)\cdot E(p_j)=-\delta_{ij}$ and $F$ preserves the intersection form, we thus find $C_i\cdot C_j=-\delta_{ij}$; hence we obtain a contraction $\eta \colon X \rightarrow \PP^2$ of the curves $C_i$. Note that since we blow up more than $10$ points, the strict transform $\widetilde{C}$ of $C$ is the only irreducible curve linearly equivalent to $-K_X$. Thus, since $E(p_i)\cdot (-K_X)=1$, any $E(p_i)$ meets $\widetilde{C}$ transversally in one point. Again since $F$ preserves the intersection form and since $F(-K_X)=-K_X$ by assumption, any $C_i$ meets $\widetilde{C}$ in one point transversally. As $\widetilde{C}\sim -K_X$, the birational morphism $\eta$ maps $\widetilde{C}$ to an irreducible cubic curve $\eta(\widetilde{C})$. Additionally, the $\eta(C_i)$ cannot be singular points of $\eta(C)$, since $\widetilde{C}\cdot C_i=1$. Since none of the points blown up equals the singular point and since an irreducible cubic curve can have at most one double point, $\eta(C)$ is again cuspidal. We can therefore choose an automorphism $\beta \in \Aut(\PP^2)$ which maps $\eta(\widetilde{C})$ to $C$, and by replacing $\eta$ with $\beta \circ \eta$ we may assume that $\eta(\widetilde{C})=C$.
	
	As we blow up at least ten points on $C^\ast$, the strict transform $\widetilde{C}$ of $C$ will be the only irreducible curve lying in the anticanonical divisor class, and since $F$ sends $-K_X$ to itself, it must equally send $\widetilde{C}$ to itself. Therefore, we see that $\pi \circ \eta^{-1}$ restricts to an automorphism of $C$, say given by $z \mapsto a'z+b'$. By assumption, there exists an eigenvalue $a\neq1$ of $F$. Any automorphism of $C$ given by $z \mapsto \hat{a}z$, where $\hat{a} \in C^\ast \cong \A^1$, extends to some automorphism $\gamma_{\hat{a}} \in \Aut(\PP^2)$. Hence, by replacing $\eta$ by $\gamma_{aa'^{-1}}^{-1}\circ \eta$, we may assume that $\pi \circ \eta^{-1}$ descends to the automorphism $C^\ast \rightarrow C^\ast, z \mapsto az+ b'$. Denote $\varphi_F=\pi \circ \eta^{-1}$.
	
	We first prove $b'=b$. For this, consider the pullback $\eta^{-1}(L_{\PP^2})$ of a line $L_{\PP^2}$ not passing through any of the $\eta(C_i)$ via $\eta$, and not being tangent to $C$. We know $\eta^{-1}(L_{\PP^2})\sim F(L)$ for a general line $L$ on $X$. Call $r_1, r_2, r_3$ the three points of intersection of $L_{\PP^2}$ and $C$. As $L_{\PP^2}$ and $C$ intersect in three points, the curve $\eta^{-1}(L_{\PP^2})$ intersects $\widetilde{C}$ in three points.
	The images of $r_1, r_2, r_3$ under $\varphi_F|_{C^\ast}$ equal the images of these three points $\eta^{-1}(r_1), \eta^{-1}(r_2), \eta^{-1}(r_3)$ under $\pi$. Since $F(L)\sim F_{11}L+F_{21}E_1+\cdots + F_{n+1,1}E_{n}$, the curve $\pi(\eta^{-1}(L_{\PP^2}))$ is a curve of degree $F_{11}$ passing through $p_i$ with multiplicity~$-F_{i+1,1}$ and through $\varphi_F|_{C^\ast}(r_1), \varphi_F|_{C^\ast}(r_2), \varphi_F|_{C^\ast}(r_3)$ with multiplicity~$1$ each. Thus, by the above, on $C^\ast$ we have the following equality:
	\[3b'\stackrel{\ref{lemma: on curve of degree d, sum is zero}}{=}a(r_1+r_2+r_3)+3b'=\varphi_F|_{C^\ast}(r_1)+\varphi_F|_{C^\ast}(r_2)+\varphi_F|_{C^\ast}(r_3)\stackrel{\ref{lemma: on curve of degree d, sum is zero}}{=}F_{21}p_1+\cdots + F_{n+1,1}p_n.\]
	By Lemma~\ref{lemma: kernel and eigenvector, arithmetic}~\eqref{eq: kernel and eigenvector, 1}, we know that $F_{21}p_1+\cdots + F_{n+1,1}p_n=3b$, and thus find $b'=b$.
	
	We analyse how $\varphi_F|_{C^\ast}$ maps the points $\eta(C_i)$. The curve $C_i$ must be linearly equivalent to $F(E(p_i))=F_{1,i+1}L+\sum_{j=1}^{n}F_{j+1,i+1}E_j$; this corresponds to the $(i+1)$-th column of $F$. As $F$ preserves the intersection form, we find $ C_i\cdot\widetilde{C}= C_i\cdot (-K_X)= F(E(p_i))\cdot F(-K_X)=E(p_i)\cdot (-K_X)=1$. Therefore, there is exactly one point lying in the intersection of $C_i$ and $\widetilde{C}$, say $\widetilde{p_i}$, and $\pi(\widetilde{p_i})=\varphi_F|_{C^\ast}(\eta(C_i))$. The point $\pi(\widetilde{p_i})$ lies on $\pi(C_i)=\pi(F(E(p_i)))$ and the curve $\pi(C_i)$ is a curve of degree $F_{1,i+1}$ with multiplicity~$-F_{j+1,i+1}$ at $p_j$ and multiplicity~$1$ at $\pi(\widetilde{p_i})$. Considering the divisor on $C$ given by $\pi(C_i)$, we thus find
	\[a\eta(C_i)+b=\varphi_F|_{C^\ast}(p_i)=\pi(\widetilde{p_i})\stackrel{\ref{lemma: on curve of degree d, sum is zero}}{=}F_{2,i+1}p_1 + \cdots + F_{n+1,i+1}p_n.\]
	By Lemma~\ref{lemma: kernel and eigenvector, arithmetic}~\eqref{eq: kernel and eigenvector, 2}, this must be equal to $ap_i+b$, which implies $\eta(C_i)=p_i$. Thus, $\varphi_F$ lifts to an automorphism on $X$, whose induced action on $\Pic(X)$ is equal to $F$ and whose restriction to $\widetilde{C}$ is the same automorphism as $\varphi_F|_{C^\ast}:z\mapsto az +b$.
\end{proof}

\begin{remark}
	A similar result holds for an arbitrary irreducible cubic, but one has to take a bit more care, see for instance Diller~\cite{MR2825269} for the degree~$2$ case. However, only very few dynamical degrees outside the unit circle arise on a non-cuspidal cubic (Diller~\cite[Theorem~$2$]{MR2825269}).
\end{remark}

With the above Proposition~\ref{prop: eigenvector implies existence on cuspidal cubic}, we have an ansatz for which points on $C^\ast$ we are looking for, and thus, we calculate the eigenvector of $(J^{\underline{n}}_d)^T$ with respect to the eigenvalue $\lambda_{d, \underline{n}}$ realising the spectral radius in the following lemma:

\begin{lemma} \label{lemma: eigenvector of transpose}
	Fix $\underline{n}$ with $n_1=2$ and $n_i \geq2$ for all $i \in \{2, \ldots, 2d-1\}$. Abbreviate $\lambda=\lambda_{d, \underline{n}}$. Then the vector $(\lambda+1, 1, \lambda, \frac{\lambda}{\lambda^{n_2}+1},\frac{\lambda^2}{\lambda^{n_2}+1}, \ldots, \frac{\lambda^{n_2}}{\lambda^{n_2}+1}, \ldots, \frac{\lambda^{n_{2d-1}}}{\lambda^{n_{2d-1}}+1})^T$ is an eigenvector of $(J^{\underline{n}}_d)^T$ with respect to the eigenvalue $\lambda$. 
\end{lemma}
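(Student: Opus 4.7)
The plan is to verify the identity $(J_d^{\underline{n}})^T w = \lambda w$ directly, coordinate by coordinate, where $w$ denotes the candidate vector. Since each column of $J_d^{\underline{n}}$, as read off from \eqref{eq: block matrix}, contains only very few nonzero entries, the system of equations splits into three types, which I would check in order of increasing difficulty.

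The easy checks are the \emph{interior-orbit} equations. For $k\geq 2$ and $1\leq j < n_k$, the column of $J_d^{\underline{n}}$ indexed by $q_k^{j-1}$ contains only a single nonzero entry, namely a $+1$ at row $q_k^{j}$. Hence the $q_k^{j-1}$-coordinate of $(J_d^{\underline{n}})^T w$ equals $w_{q_k^j} = \tfrac{\lambda^{j+1}}{\lambda^{n_k}+1} = \lambda\cdot w_{q_k^{j-1}}$, which is immediate. The analogous check for the column indexed by $q_1^0$ (whose sole nonzero entry is a $+1$ at row $q_1^1$) gives $w_{q_1^1}=\lambda=\lambda\cdot w_{q_1^0}$. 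For each $k\geq 2$, the \emph{wrap-around} column indexed by $q_k^{n_k-1}$ has entries $+1$ at $L$, $-1$ at $q_1^0$, and $-1$ at $q_k^0$; the required equation reads $(\lambda+1)-1-\tfrac{\lambda}{\lambda^{n_k}+1}=\lambda\cdot\tfrac{\lambda^{n_k}}{\lambda^{n_k}+1}$, which collapses to the elementary identity $\lambda - \tfrac{\lambda}{\lambda^{n_k}+1} = \tfrac{\lambda^{n_k+1}}{\lambda^{n_k}+1}$.

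The substantive equations are the two corresponding to columns $L$ and $q_1^1$. Reading off \eqref{eq: block matrix}, each of these columns has a $-1$ at every row $q_k^0$ for $k\geq 2$, producing the sum $\sum_{k=2}^{2d-1}\tfrac{\lambda}{\lambda^{n_k}+1}$, plus the entries $d,-(d-1)$ (respectively $d-1,-(d-2)$) at rows $L$ and $q_1^0$. After collecting terms, both equations simplify to the same identity
\[\lambda^2-(d-1)\lambda-1+\sum_{k=2}^{2d-1}\frac{\lambda}{\lambda^{n_k}+1}=0,\]
and clearing denominators by $\prod_{k=2}^{2d-1}(\lambda^{n_k}+1)$ yields exactly $p_{d,\underline{n}}(\lambda)=0$, which holds by Definition~\ref{def: largest real roots of polynomials}.

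The verification presents no real obstacle; the only point requiring care is a correct reading of the sparsity pattern of \eqref{eq: block matrix}, in particular keeping track of where the $-1$'s appear in the $L$-row versus the $q_1^0$-row of each wrap-around column, and distinguishing the entries $d,d-1,d-2$ in the top $3\times 3$ block. Everything else is routine algebra together with the defining identity $p_{d,\underline{n}}(\lambda_{d,\underline{n}})=0$.
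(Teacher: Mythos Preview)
Your proof is correct and follows essentially the same approach as the paper: a direct coordinate-by-coordinate verification that splits into the trivial ``shift'' equations, the wrap-around equations $(\lambda+1)-1-\tfrac{\lambda}{\lambda^{n_k}+1}=\lambda\cdot\tfrac{\lambda^{n_k}}{\lambda^{n_k}+1}$, and the two nontrivial equations for the $L$- and $q_1^1$-columns, both of which reduce to $p_{d,\underline{n}}(\lambda)=0$ after clearing denominators. The only minor omission is that the paper explicitly notes $\lambda>2$ (via Proposition~\ref{prop: greater than 2}) to ensure the denominators $\lambda^{n_k}+1$ are nonzero, but this is implicit in your setup.
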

\begin{proof}
	Since $\lambda$ is strictly larger than $2$ by Proposition~\ref{prop: greater than 2}, the vector is well-defined. We check directly that $(\lambda+1, 1, \lambda, \frac{\lambda}{\lambda^{n_2}+1},\frac{\lambda^2}{\lambda^{n_2}+1}, \ldots, \frac{\lambda^{n_2}}{\lambda^{n_2}+1}, \ldots, \frac{\lambda^{n_{2d-1}}}{\lambda^{n_{2d-1}}+1})^T$ is an eigenvector of $(J^{\underline{n}}_d)^T$ with respect to the eigenvalue $\lambda$. Indeed, for all the entries where $(J_d^{\underline{n}})^T$ only permutes the entries of the eigenvector, this follows by construction of the vector. The only equations we need to check are
	\begin{align*}
		d(\lambda+1)-(d-1)-\sum_{i=2}^{2d-1}\frac{\lambda}{\lambda^{n_i}+1} &= \lambda(\lambda+1), \\
		(d-1)(\lambda+1)-(d-2)-\sum_{i=2}^{2d-1}\frac{\lambda}{\lambda^{n_i}+1} &= \lambda^2, \\
		\lambda+1 -1-\frac{\lambda}{\lambda^{n_i}+1}&=\lambda\frac{\lambda^{n_i}}{\lambda^{n_i}+1}, \quad i\in \{2,\ldots, 2d-1\}.
	\end{align*}
	Multiplying the two upper equations by $\prod_{i =2}^{2d-1}(\lambda^{n_i}+1)$, we find the expression of the characteristic polynomial, which evaluates to $0$ at $\lambda$, and the last equation is also valid, which can be seen after multiplying both sides by $\lambda^{n_i}+1$.
\end{proof}

With Proposition~\ref{prop: eigenvector implies existence on cuspidal cubic} and Lemma~\ref{lemma: eigenvector of transpose}, we now have precise instructions on how to choose the points on $C^\ast$. The following proposition shows that with said choice on the points and a weak assumption on the orbits, we can ensure that $j_d^{\underline{n}}$ exists. Parts of the proof follow the same arguments as in the proof of Proposition~$5.12$ of Blanc and Cantat~\cite{MR3454379}.

\begin{proposition} \label{prop: jonquieres maps exist}
	Let $d \geq 4$ and denote by $C\subset \PP^2$ the cuspidal cubic $C=V(xy^2-z^3)$. Fix the orbit data $\underline{n}=(n_2,\ldots, n_{2d-1})$ with $n_i\geq 2$ for each $i\in \{2, \ldots, 2d-1\}$ and all the $n_i$ pairwise distinct. Denote by $\lambda \in \R$ the eigenvalue $\lambda_{d, \underline{n}}$ realising the spectral radius of the de Jonqui\`eres matrix $J_d^{\underline{n}}$ and set 
		\begin{align*}
		q_1^0=q_1&=\frac{1}{\lambda-1}(2-\lambda), \\ q_1^1=p_1&=\frac{1}{\lambda-1}(2\lambda-1), \\ q_i^0=q_i&=\frac{1}{\lambda-1}(3\frac{\lambda}{\lambda^{n_i}+1}-(\lambda+1)) , \\ q_i^j&=\frac{1}{\lambda-1}(3\frac{\lambda^{j+1}}{\lambda^{n_i}+1} -(\lambda+1))
		, \quad 2 \leq i \leq 2d-1, 1 \leq j \leq n_i-1,
	\end{align*}
	where the points are identified with their corresponding point on $C^\ast \cong \A^1$. Then there exists a de Jonqui\`eres map $j^{\underline{n}}_d \in \Bir_d(\PP^2)$ with base points $p_1, p_2=q_2^{n_2-1}, \ldots, p_{2d-1}=q_{2d-1}^{n_{2d-1}-1}$, orbits $O(q_i)=\{q_i^j \thinspace | \thinspace 0 \leq j \leq n_i-1\}$ and dynamical degree~$\lambda$.
\end{proposition}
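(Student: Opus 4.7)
The plan is to invoke Proposition~\ref{prop: eigenvector implies existence on cuspidal cubic} with the matrix $F = J_d^{\underline{n}}$, the eigenvalue $a := \lambda = \lambda_{d, \underline{n}}$, and the eigenvector of $F^T$ furnished by Lemma~\ref{lemma: eigenvector of transpose}. Writing $b = \lambda + 1$ and $v_1, \ldots, v_n$ for the remaining coordinates of that eigenvector, the points $q_i^j$ listed in the statement are exactly the values $\frac{1}{\lambda - 1}(3v_i - b)$ required by the hypothesis of that proposition; so the choice of points is forced by this ansatz.

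First, preliminaries. By Proposition~\ref{prop: greater than 2} we have $\lambda > 2$, so $\lambda - 1 \neq 0$ and the denominators $\lambda^{n_i}+1$ are nonzero; hence the $n := 2 + \sum_{i=2}^{2d-1} n_i$ points $q_i^j$ are well-defined on $C^\ast \cong \A^1$. I would verify that they are pairwise distinct: within a single orbit, $j \mapsto \lambda^{j+1}/(\lambda^{n_i}+1)$ is strictly monotonic for $\lambda > 1$; across distinct orbits $i \neq i'$ (both $\geq 2$), any coincidence would force $\lambda^{j+1}(\lambda^{n_{i'}}+1) = \lambda^{j'+1}(\lambda^{n_i}+1)$, which combined with $n_i \neq n_{i'}$ and $\lambda > 2$ is ruled out by an elementary estimate; and the two points of the first orbit differ from each other and from all $q_{i'}^{j'}$ with $i' \geq 2$ by direct inspection.

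Let $\pi : X \to \PP^2$ be the blow-up at these $n$ points, ordered so that in the basis $L, E(q_1^0), E(q_1^1), E(q_2^0), \ldots, E(q_{2d-1}^{n_{2d-1}-1})$ of $\Pic(X)$ the candidate matrix of the pushforward is precisely $F = J_d^{\underline{n}}$ from~\eqref{eq: block matrix} with $m = 2d-1$. The relation $F Q F^T = Q$ follows from Lemma~\ref{lemma: adapted intersection form} since $H_{d,2d-1} = 0$, and $F$ fixes $-K_X$ by a direct column-by-column calculation. The remaining hypothesis of Proposition~\ref{prop: eigenvector implies existence on cuspidal cubic} is that each class $F(E(q_i^j))$ contains an irreducible curve. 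For $j < n_i - 1$ the class is simply $E(q_i^{j+1})$, and nothing needs to be checked. The genuine content is in the $2d-1$ base-point classes, which read
\begin{align*}
F(E(p_i)) &= L - E(q_1) - E(q_i^0), \quad i \geq 2, \\
F(E(p_1)) &= (d-1) L - (d-2) E(q_1) - \sum_{i \geq 2} E(q_i^0).
\end{align*}

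I expect establishing this irreducibility to be the main obstacle. For $i \geq 2$, the line through $q_1$ and $q_i^0$ realises the class, and one must rule out any further incidence with the remaining $q_{i'}^{j'}$; for $i = 1$, one has to produce an irreducible plane curve of degree $d-1$ with the prescribed multiplicities and no further incidence. Both conditions translate into sum-zero relations on the cuspidal cubic via Lemma~\ref{lemma: on curve of degree d, sum is zero} and can be verified using the eigenvector identities~\eqref{eq: kernel and eigenvector, 1}--\eqref{eq: kernel and eigenvector, 2} of Lemma~\ref{lemma: kernel and eigenvector, arithmetic} together with the explicit formulas for the $q_i^j$, in the same spirit as~\cite[Proposition~$5.12$]{MR3454379}. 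Once these verifications are in hand, Proposition~\ref{prop: eigenvector implies existence on cuspidal cubic} yields $f \in \Aut(X)$ with $f_\ast = J_d^{\underline{n}}$ and $f|_{\widetilde{C}}$ equal to $z \mapsto \lambda z + (\lambda + 1)$ on $C^\ast$. The birational map $j_d^{\underline{n}} := \pi \circ f \circ \pi^{-1}$ then has degree $d$ with base points of multiplicities $(d-1, 1, \ldots, 1)$ at $p_1, p_2, \ldots, p_{2d-1}$, hence is a de Jonqui\`eres map; the block-shift pattern of $J_d^{\underline{n}}$ forces $f^s(q_i) = q_i^s$ for $0 \leq s \leq n_i - 1$, giving the claimed orbit data; and since $f$ is an automorphism of the smooth projective surface $X$, the dynamical degree of $j_d^{\underline{n}}$ equals the spectral radius of $J_d^{\underline{n}}$, which is $\lambda$ by Definition~\ref{def: largest real roots of polynomials}.
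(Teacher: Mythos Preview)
Your overall strategy matches the paper's exactly: invoke Proposition~\ref{prop: eigenvector implies existence on cuspidal cubic} with the eigenvector of Lemma~\ref{lemma: eigenvector of transpose}, check distinctness of the $q_i^j$, and verify that each class $F(E(q_i^j))$ contains an irreducible curve. The distinctness argument you sketch is essentially the paper's as well.

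The gap is in producing an irreducible curve in the class $(d-1)L-(d-2)E(q_1)-\sum_{i\geq 2}E(q_i^0)$. You write that this ``translates into sum-zero relations on the cuspidal cubic via Lemma~\ref{lemma: on curve of degree d, sum is zero}'', but that lemma only goes one way: incidence on a curve forces a sum relation. It does not manufacture a curve with prescribed multiplicities, and here the na\"ive dimension count for degree $d-1$ curves with a $(d-2)$-fold point and $2d-2$ further simple points is zero, so existence --- let alone irreducibility --- is genuinely at issue. The paper handles this by elementary transformations: blow up $q_1$ to reach $\F_1$, then perform successive elementary transformations centred at $q_2,\ldots,q_{2d-1}$ (this step does use a sum-zero check, namely that no two $q_i$, $i\geq 2$, are collinear with $q_1$). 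A B\'ezout argument against $C$ then excludes any curve of self-intersection $\leq -3$ on the resulting Hirzebruch surface, forcing it to be $\F_1$; its $(-1)$-section, traced back to $\PP^2$, is the desired irreducible degree $d-1$ curve. Only \emph{after} this curve and the lines $\overline{q_1q_k}$ are in hand do the sum-zero verifications (no further $q_i^j$ with $j>0$ lying on them) finish the job. Your reference to \cite[Proposition~$5.12$]{MR3454379} is apt --- the paper says explicitly that it follows that argument --- but the existence of the $(d-1)$-curve is a separate geometric step that cannot be absorbed into the incidence checks.
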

\begin{proof}
	To prove the proposition, we show that there exists an automorphism realising the de Jonqui\`eres matrix $J_d^{\underline{n}}$ of \eqref{eq: block matrix} on the blow-up $\pi\colon X \rightarrow \PP^2$ in the points $q_i^j$ by applying Proposition~\ref{prop: eigenvector implies existence on cuspidal cubic}. By direct calculation, we see that $J_d^{\underline{n}}$ sends $-K_X$ to itself and preserves the intersection form. Thus, to apply Proposition~\ref{prop: eigenvector implies existence on cuspidal cubic}, we need to prove the following two claims: that the points $q_i^j$ are all distinct, and that in each class of $J_d^{\underline{n}}(E(q_i^j))$ there exists an irreducible curve, where $E(q_i^j)=\pi^{-1}(q_i^j)$ denotes the exceptional curve above $q_i^j$. 
	
	We start with proving that the points are distinct. Assume $q_1=p_1$. This is equivalent to $\lambda = 1$, a contradiction to $\lambda>2$ by Proposition~\ref{prop: properties of jonquieres}. Now, take any $i\neq 1$ and assume $q_1=q_i^j$. This is equivalent to $\lambda^{n_i}+1=\lambda^{j+1}$, a contradiction to $j\leq n_i-1$ and $\lambda >2$. Similarly, we find that $p_1=q_i^j$ is equivalent to $\lambda^{n_i}+1=\lambda^{j}$, again a contradiction. Then, for a fixed $i \neq 1$, we can also immediately see that $q_i^{k} \neq q_i^{\ell}$ whenever $k \neq \ell$. Assume therefore that $i,j \neq 1$ and that we have $0\leq k \leq n_i-1$ and $0 \leq \ell \leq n_j-1$ such that $q_i^k=q_j^\ell$. Assume without loss of generality that $n_i>n_j$. This is equivalent to $\lambda^k(\lambda^{n_j}+1)=\lambda^\ell(\lambda^{n_i}+1)$.
	Since $n_i>n_j$ and $\lambda>2$, the equation yields $k>\ell$ and is  equivalent to $\lambda^m(\lambda^{n_j}+1)=\lambda^{n_i}+1$, where $m=k-\ell$. As $\lambda>2$ it must follow that $\lambda^{m+n_j}<\lambda^m(\lambda^{n_j}+1)-1= \lambda^{n_i}$, and thus $m+n_j<n_i$. 
	Hence $\lambda^m(\lambda^{n_j}+1)<2\lambda^{m+n_j}<\lambda^{m+n_j+1}\leq \lambda^{n_i}<\lambda^{n_i}+1$, a contradiction. Therefore, none of the points blown up by $\pi$ agree.
	
	Now that we know that the point $q_i^j$ are distinct, we turn to proving that there exist irreducible curves in $X$ linearly equivalent to $J_d^{\underline{n}}(E(q_i^j))$. Note that using \eqref{eq: block matrix}, the only $J_d^{\underline{n}}(E(q_i^j))$ which are not equal to an exceptional divisor are:
	\begin{align*}
		J_d^{\underline{n}}(E(q_1^1))=J_d^{\underline{n}}(E(p_1))&= (d-1)L-(d-2)E(q_1)-\sum_{i =2}^{2d-1}E(q_i), \\
		J_d^{\underline{n}}(E(q_i^{n_i-1}))=J_d^{\underline{n}}(E(p_i))&= L-E(q_1)-E(q_i), \quad i \in \{2, \ldots, 2d-1\}, \\
		J_d^{\underline{n}}(E(q_i^j))&= E(q_i^{j+1}).
	\end{align*}
	We want to secure the existence of an irreducible degree~$d-1$ curve through $q_1$ with multiplicity~$d-2$ and through $q_2, \ldots, q_{2d-1}$ with multiplicity~$1$ each. 
	
	Start with the blow-up $\tau\colon X_1\rightarrow\PP^2$ of $\PP^2$ in $q_1$ to the first Hirzebruch surface $X_1=\F_1$. Blow up the point on $X_1$ corresponding to $q_2$ and blow down the strict transform of the line through $q_1$ and $q_2$, resulting in a birational map $\tau_2\colon X_1\dashrightarrow X_2$, where $X_2$ is either $\F_0$ or $\F_2$. By proving that for all $2\leq i<j\leq 2d-1$, the points $q_i$ and $q_j$ cannot lie on a line through $q_1$, we can repeat this procedure. In this way, we construct a sequence of birational maps $X_1 \overset{\tau_2}{\dashrightarrow}X_2 \overset{\tau_3}{\dashrightarrow} \cdots  \overset{\tau_{2d-1}}{\dashrightarrow} X_{2d-1}$, where at each step, $\tau_i$ equals the blow-up the point corresponding to $q_i$ followed by the blow-down of the curve we obtain from pushing forward the strict transform of the line through $q_1$ and $q_i$ by the map $\tau_{i-1}\cdots\tau_2$; moreover, $X_i$ is equal to some Hirzebruch surface $\F_s$ for some $s\geq 0$ with $s\equiv i \mod 2$. Thus assume by contradiction that there exist $2\leq i<j\leq 2d-1$ such that the points $q_1$, $q_i$ and $q_j$ lie on a line. By Lemma~\ref{lemma: on curve of degree d, sum is zero}, this implies $q_1+q_i+q_j=0$, which is equivalent to $\lambda^{n_i+n_j}=1$, a contradiction to $\lambda > 2$. Therefore, a sequence of birational maps $\tau_{2d-1}\cdots \tau_2$ as described above does indeed exist.
	
	Now, $X_{2d-1}$ is equal to some $\F_s$ with $s$ odd. We prove $s=1$. Assume there exists an irreducible curve of self-intersection~$\leq -3$. It must come from a curve in $\PP^2$ of degree~$k$ passing through $q_1$ with multiplicity~$k-1$ and through $\ell$ points out of $q_2,\ldots, q_{2d-1}$ with multiplicity~$1$. The self-intersection of its strict transform in $X_1$ is $k^2-(k-1)^2=2k-1$. Whenever one of the $\ell$~points through which it passes is blown up, the self-intersection decreases by one; yet for every point it does not pass through, the self-intersection increases by one, since we contract a $(-1)$-curve passing through that point. Thus, the irreducible curve of self-intersection~$\leq -3$ has self-intersection $2k-1-\ell+2d-2-\ell=2(d+k-\ell)-3\leq -3$. Therefore, $d+k\leq \ell$. But, as all the $q_i$ lie on the cubic curve $C$, with B\'ezout, we find for any curve of $\PP^2$ of degree~$k$ passing through $q_1$ with multiplicity~$k-1$ and $\ell$ other points with multiplicity~$1$ that $3k-(k-1)-\ell \geq 0$, implying $2k+1\geq \ell \geq d+k$ and thus $k\geq d-1$ and $\ell\geq 2d-1$. But $\ell \leq 2d-2$, a contradiction. Hence, $X_{2d-1}=\F_1$, and we can contract the $(-1)$-curve to obtain a birational transformation $f\colon \PP^2\dashrightarrow\PP^2$ as in the following diagram, where $\pi$ decomposes into the blow-up $\pi_1$ of the points $q_1,\ldots, q_{2d-1}$, and $\pi_2$ the blow-up of the remaining $q_i^j$: 
	\begin{figure}[H]
			\centering
			\begin{tikzcd}
						~ & ~& X  \arrow[dd, "\pi"] \arrow[dl, "\pi_2"']  \\
						~ & X' \arrow[dl] \arrow[dr, "\pi_1"] & ~  \\
						\PP^2 \arrow[rr, dashleftarrow, "f"] & ~ & \PP^2.
			\end{tikzcd}
	\end{figure} 
	The irreducible curve of self-intersection $-1$ on $X_{2d-1}$ must by the same reasoning as above come from an irreducible curve on $\PP^2$ of degree~$d-1$ passing through $q_1$ with multiplicity~$d-2$ and through all the other $q_2, \ldots, q_{2d-1}$ with multiplicity~$1$, which lifts to an irreducible curve on $X'$ linearly equivalent to $(d-1)L-(d-2)E(q_1)-\sum_{i=2}^{2d-1}E(q_i)$. Furthermore, any line through $q_1$ and $q_i$ lifts to an irreducible curve in $X'$.
	
	To verify that the lines through $q_1$ and $q_i$ and the curve of degree~$d-1$ described above indeed lift to $X$ as desired, we need to prove that none of the other points $q_i^j$, $j>0$, which are blown up by $\pi_2$, lie on one of these lines through $q_1$ and $q_k$, or on the curve of degree $d-1$ passing through $q_1$ with multiplicity~$d-2$ and through $q_2, \ldots, q_{2d-1}$ with multiplicity~$1$. 
	
	We prove that none of the $q_i^j$ with $j>0$ lie on a line through $q_1$ and $q_k$: if that were the case, then by Lemma~\ref{lemma: on curve of degree d, sum is zero}, we would find $q_1+q_k+q_i^j=0$. Consider first $q_1+q_k+p_1=0$. This is equivalent to $\lambda=0$, a contradiction. If now $i>1$, we find that $q_1+q_k+q_i^j=0$ holds if and only if $\tfrac{\lambda}{\lambda^{n_k}+1}+\tfrac{\lambda^{j+1}}{\lambda^{n_i}+1}=1$. Both terms on the left can be bounded from above strictly by $\tfrac{1}{2}$ as long as $j+1<n_i-1$, providing a contradiction. If $j+1=n_i-1$, then we find $(\lambda^{n_i}+1)\lambda=\lambda^{n_k}+1$. Therefore, $\lambda^{n_k}+1=\lambda^{n_i+1}+\lambda>\lambda^{n_i+1}+1$, whence $n_k>n_i+1$. Then, $(\lambda^{n_i}+1)\lambda<2\lambda^{n_i+1}<\lambda^{n_i+2}\leq \lambda^{n_k}<\lambda^{n_k}+1$, again a contradiction. Hence, no other $q_i^j$ lies on a line through $q_1$ and $q_k$.

	We need to check that none of the orbits lie on the degree~$d-1$ curve through $q_1, \ldots, q_{2d-1}$ with all points having multiplicity~$1$ except $q_1$ with multiplicity~$d-2$. By Lemma~\ref{lemma: on curve of degree d, sum is zero}, this implies $q_i^j=-(d-2)q_1-\sum_{k =2}^{2d-1}q_k$. By replacing the $q_i$ with their explicit values given in the statement and using $\lambda\sum_{k =2}^{2d-1}\prod_{\ell \neq k}(\lambda^{n_\ell}+1)=-(\lambda^2-(d-1)\lambda-1)\prod_{k =2}^{2d-1}(\lambda^{n_k}+1)$, we find
	\begin{align*}
		(\lambda -1)((d-2)q_1+\sum_{k =2}^{2d-1}q_k)&=(d-2)(2-\lambda)+\sum_{k =2}^{2d-1}(3\frac{\lambda}{\lambda^{n_k}+1}-(\lambda+1)) \\
		&=(d-2)(2-\lambda)+\frac{3}{\prod_{k =2}^{2d-1}(\lambda^{n_k}+1)}\lambda\sum_{k =2}^{2d-1}\prod_{\ell \neq k}(\lambda^{n_\ell}+1)-(2d-2)(\lambda+1) \\
		&=(d-2)(2-\lambda)-3(\lambda^2-(d-1)\lambda-1)-(2d-2)(\lambda+1) \\
		&= -3\lambda^2+(\lambda+1).
	\end{align*}
	Therefore, $q_i^j=-(d-2)q_1-\sum_{k =2}^{2d-1}q_k$ would imply 
	$\lambda^2+1=0$ if $q_i^j=q_1$, $\lambda^2+\lambda=0$ if $q_i^j=p_1$ and $\lambda^2(\lambda^{n_i}+1)+\lambda^j=0$ otherwise.
	But for any of these equations, the left side is positive, since $\lambda>2$, leading to the desired contradiction.

	By the above arguments, there exist irreducible curves $J_d^{\underline{n}}(E(q_i^j))$ on the blow-up $\pi \colon X\rightarrow \PP^2$ in the points $q_i^j$. Thus, applying Proposition~\ref{prop: eigenvector implies existence on cuspidal cubic}, there exists a de Jonqui\`eres map $j_d^{\underline{n}} \in \Bir_d(\PP^2)$ with base points $p_1,\ldots, p_{2d-1} \in C^\ast$ and orbits $O(q_i)=\{q_i^j\thinspace |\thinspace 0\leq j\leq n_i-1\}$ which lifts to an automorphism on $X$ such that the induced action on $\Pic(X)$ is equal to the de Jonqui\`eres matrix $J_d^{\underline{n}}$. As the spectral radius of $J_d^{\underline{n}}$ is equal to $\lambda$ by assumption, $j_d^{\underline{n}}$ has dynamical degree~$\lambda$. This finishes the proof.
\end{proof}	

Knowing that the de Jonqui\`eres maps which are based on the de Jonqui\`eres matrices truly exist, we can prove the main statement of this section:

\begin{theorem} \label{thm: bounding from below}
	If $d\geq 4$, then the ordinal of $\Lambda(\Bir_d(\PP^2))$ is greater or equal to $\omega^{2d-2}$. Thus, the ordinal of $\Lambda(\Bir(\PP^2))$ is greater or equal to $\omega^\omega$.
\end{theorem}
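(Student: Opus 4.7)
The plan is to realise every element of the auxiliary set $\Lambda_{d,2d-1}$ constructed in Proposition~\ref{prop: ordinal of helping sets} as an honest dynamical degree in $\Lambda(\Bir_d(\PP^2))$, and then to stack these realisations as $d$ varies.

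First, I would fix $d\geq 4$ and pick an arbitrary $\lambda_{d,\underline{n}}\in \Lambda_{d,2d-1}$. By the inductive definition of the auxiliary sets, the indexing tuple $\underline{n}=(n_2,\ldots,n_{2d-1})$ satisfies $2\leq n_2<n_3<\cdots<n_{2d-1}$; in particular each $n_i\geq 2$ and the $n_i$ are pairwise distinct. These are precisely the hypotheses of Proposition~\ref{prop: jonquieres maps exist}, which produces a de Jonqui\`eres map $j_d^{\underline{n}}\in\Bir_d(\PP^2)$ whose base points lie on the cuspidal cubic $V(xy^2-z^3)$ and whose dynamical degree equals $\lambda_{d,\underline{n}}$. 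Hence $\Lambda_{d,2d-1}\subseteq \Lambda(\Bir_d(\PP^2))$, and since $\Lambda_{d,2d-1}$ has order type $\omega^{2d-2}$ by Proposition~\ref{prop: ordinal of helping sets}, the first assertion follows.

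For the second assertion I would form the disjoint union $\bigsqcup_{d\geq 4}\Lambda_{d,2d-1}\subseteq \Lambda(\Bir(\PP^2))$. The final paragraph of Proposition~\ref{prop: ordinal of helping sets} records that for $d<d'$ every element of $\Lambda_{d,2d-1}$ lies strictly below every element of $\Lambda_{d',2d'-1}$; so this union, with the order inherited from $\R$, has order type $\omega^6+\omega^8+\omega^{10}+\cdots$, an ascending chain of ordinals whose supremum is $\omega^\omega$. This yields the desired lower bound $\Lambda(\Bir(\PP^2))\geq \omega^\omega$.

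The main obstacle has already been discharged upstream: the delicate work sits inside Proposition~\ref{prop: jonquieres maps exist}, where one checks that the prescribed orbit points on $C^\ast$ are distinct, that each class $J_d^{\underline{n}}(E(q_i^j))$ contains an irreducible curve, and that the resulting Hirzebruch surface is $\F_1$. At the present stage I only need to verify that the defining inequalities of $\Lambda_{d,2d-1}$ feed into those hypotheses, which they do by construction, so the proof collapses to a short assembly of the earlier results.
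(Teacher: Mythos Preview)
Your argument is correct and follows the same route as the paper: embed $\Lambda_{d,2d-1}$ into $\Lambda(\Bir_d(\PP^2))$ via the de Jonqui\`eres realisations, invoke Proposition~\ref{prop: ordinal of helping sets} for the order type $\omega^{2d-2}$, and stack the sets over $d\geq 4$ using their pairwise separation. If anything, you are more explicit than the paper in naming Proposition~\ref{prop: jonquieres maps exist} as the source of the inclusion $\Lambda_{d,2d-1}\subset\Lambda(\Bir_d(\PP^2))$, which the paper's short proof leaves implicit.
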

\begin{proof}
	By Proposition~\ref{prop: ordinal of helping sets}, if we set $m=2d-1$, we find that $\Lambda_{d, 2d-1} \subset \Lambda(\Bir_d(\PP^2))$ is of order type $\omega^{2d-2}$, and it is contained in the open interval $(d-1, d)$. Then, considering $\bigsqcup_{d\geq 4} \Lambda_{d, 2d-1} \subset \Lambda(\Bir(\PP^2))$, we find that the order type of $\Lambda(\Bir(\PP^2))$ is at least $\omega^\omega$. 
\end{proof}

Combining Theorem~\ref{thm: bounding from above} and Theorem~\ref{thm: bounding from below} implies Theorem~\ref{thm: main theorem}.

\begin{remark} \label{rem: last remark}
	As noted in Remark~\ref{rem: Lambdas}, the accumulation points of $\Lambda_{d, 2d-1}$ lie in $\Lambda_{d, 2d-2}$, which are not Salem numbers any more, but Pisot numbers. \emph{Salem numbers }are algebraic integers strictly larger than $1$ whose other Galois conjugates lie in the closure of the unit disk, with at least one of the conjugates on the boundary, and a \emph{Pisot number} is an algebraic integer strictly larger than $1$ whose other Galois conjugates lie in the open unit disk (see for example Blanc and Cantat~\cite[$1.1.2$]{MR3454379}). One can see that the elements of $\Lambda_{d, 2d-1}$ are Salem and the elements of $\Lambda_{d, m}$ with $m<2d-1$ are Pisot by noting that $m=2d-1$ if and only if $H_{d,m}$ from Lemma~\ref{lemma: adapted intersection form} is the zero matrix.
	
	The main difference between Salem and Pisot numbers is that any birational map which can be realised as an automorphism on some blow-up must be Salem, and any birational map which cannot is Pisot, except for quadratic reciprocal integers, which can occur in both cases. Yet by Blanc and Cantat~\cite[Theorem~D]{MR3454379}, the set $\Lambda(\Bir(\PP^2))$ is closed as soon as $\mathbf{k}$ is uncountable and algebraically closed (for example $\mathbf{k}=\C$), and thus the accumulation points $\Lambda_{d, 2d-2}$ and in fact all elements of the closure $\overline{\Lambda_{d, 2d-1}}=\bigsqcup_{1 \leq m \leq 2d-1}\Lambda_{d, m}$ are also realised as dynamical degrees of birational maps. 
\end{remark}

\small{

\bibliographystyle{alpha}
\bibliography{refs}

\begin{thebibliography}{BCM15}

\bibitem[BC16]{MR3454379}
J\'{e}r\'{e}my Blanc and Serge Cantat.
\newblock Dynamical degrees of birational transformations of projective
  surfaces.
\newblock {\em J. Amer. Math. Soc.}, 29(2):415--471, 2016.

\bibitem[BCM15]{MR3319964}
Cinzia Bisi, Alberto Calabri, and Massimiliano Mella.
\newblock On plane {C}remona transformations of fixed degree.
\newblock {\em J. Geom. Anal.}, 25(2):1108--1131, 2015.

\bibitem[BF13]{MR3092478}
J\'{e}r\'{e}my Blanc and Jean-Philippe Furter.
\newblock Topologies and structures of the {C}remona groups.
\newblock {\em Ann. of Math. (2)}, 178(3):1173--1198, 2013.

\bibitem[BK06]{bedford2006periodicities}
Eric Bedford and Kyounghee Kim.
\newblock Periodicities in linear fractional recurrences: degree growth of
  birational surface maps.
\newblock {\em Michigan Mathematical Journal}, 54(3):647--671, 2006.

\bibitem[BK09]{bedford2009dynamics}
Eric Bedford and Kyounghee Kim.
\newblock Dynamics of rational surface automorphisms: linear fractional
  recurrences.
\newblock {\em Journal of Geometric Analysis}, 19(3):553--583, 2009.

\bibitem[BK10]{bedford2010continuous}
Eric Bedford and Kyounghee Kim.
\newblock Continuous families of rational surface automorphisms with positive
  entropy.
\newblock {\em Mathematische Annalen}, 348(3):667--688, 2010.

\bibitem[Bou02]{MR1890629}
Nicolas Bourbaki.
\newblock {\em Lie groups and {L}ie algebras. {C}hapters 4--6}.
\newblock Elements of Mathematics (Berlin). Springer-Verlag, Berlin, 2002.
\newblock Translated from the 1968 French original by Andrew Pressley.

\bibitem[Bou04]{MR2102219}
Nicolas Bourbaki.
\newblock {\em Theory of sets}.
\newblock Elements of Mathematics (Berlin). Springer-Verlag, Berlin, 2004.
\newblock Reprint of the 1968 English translation.

\bibitem[BV99]{MR1704282}
M.~P. Bellon and C.-M. Viallet.
\newblock Algebraic entropy.
\newblock {\em Comm. Math. Phys.}, 204(2):425--437, 1999.

\bibitem[Can01]{cantat2001dynamique}
Serge Cantat.
\newblock Dynamique des automorphismes des surfaces k3.
\newblock {\em Acta Mathematica}, 187(1):1--57, 2001.

\bibitem[DF01]{MR1867314}
J.~Diller and C.~Favre.
\newblock Dynamics of bimeromorphic maps of surfaces.
\newblock {\em Amer. J. Math.}, 123(6):1135--1169, 2001.

\bibitem[DG11]{deserti2011automorphisms}
Julie D{\'e}serti and Julien Grivaux.
\newblock Automorphisms of rational surfaces with positive entropy.
\newblock {\em Indiana University Mathematics Journal}, pages 1589--1622, 2011.

\bibitem[Dil11]{MR2825269}
Jeffrey Diller.
\newblock Cremona transformations, surface automorphisms, and plane cubics.
\newblock {\em Michigan Math. J.}, 60(2):409--440, 2011.
\newblock With an appendix by Igor Dolgachev.

\bibitem[DS17]{MR3667901}
Tien-Cuong Dinh and Nessim Sibony.
\newblock Equidistribution problems in complex dynamics of higher dimension.
\newblock {\em Internat. J. Math.}, 28(7):1750057, 31, 2017.

\bibitem[Gro87]{gromov1987entropy}
Mikhail Gromov.
\newblock Entropy, homology and semialgebraic geometry.
\newblock {\em Ast{\'e}risque}, 145(146):225--240, 1987.

\bibitem[Gro03]{gromov2003entropy}
Mikha{\i}l Gromov.
\newblock On the entropy of holomorphic maps.
\newblock {\em Enseign. Math}, 49(3-4):217--235, 2003.

\bibitem[Hum90]{MR1066460}
James~E. Humphreys.
\newblock {\em Reflection groups and {C}oxeter groups}, volume~29 of {\em
  Cambridge Studies in Advanced Mathematics}.
\newblock Cambridge University Press, Cambridge, 1990.

\bibitem[JR18]{jonsson2018complex}
Mattias Jonsson and Paul Reschke.
\newblock On the complex dynamics of birational surface maps defined over
  number fields.
\newblock {\em Journal f{\"u}r die reine und angewandte Mathematik (Crelles
  Journal)}, 2018(744):275--297, 2018.

\bibitem[McM07]{MR2354205}
Curtis~T. McMullen.
\newblock Dynamics on blowups of the projective plane.
\newblock {\em Publ. Math. Inst. Hautes \'{E}tudes Sci.}, (105):49--89, 2007.

\bibitem[McM16]{mcmullen2016automorphisms}
Curtis~T McMullen.
\newblock Automorphisms of projective k3 surfaces with minimum entropy.
\newblock {\em Inventiones mathematicae}, 203(1):179--215, 2016.

\bibitem[McM21]{MR4292865}
Curtis~T. McMullen.
\newblock Modular symbols for {T}eichm\"{u}ller curves.
\newblock {\em J. Reine Angew. Math.}, 777:89--125, 2021.

\bibitem[Nag61]{MR126444}
Masayoshi Nagata.
\newblock On rational surfaces. {II}.
\newblock {\em Mem. Coll. Sci. Univ. Kyoto Ser. A. Math.}, 33:271--293,
  1960/61.

\bibitem[Ueh16]{MR3477879}
Takato Uehara.
\newblock Rational surface automorphisms with positive entropy.
\newblock {\em Ann. Inst. Fourier (Grenoble)}, 66(1):377--432, 2016.

\bibitem[Xie15]{junyi2015periodic}
Junyi Xie.
\newblock Periodic points of birational transformations on projective surfaces.
\newblock {\em Duke Mathematical Journal}, 164(5):903--932, 2015.

\bibitem[Yom87]{yomdin1987volume}
Yosef Yomdin.
\newblock Volume growth and entropy.
\newblock {\em Israel Journal of Mathematics}, 57(3):285--300, 1987.

\end{thebibliography}

}
\noindent

\bigskip\noindent
Anna Bot ({\tt annakatharina.bot@unibas.ch}),\\
Department of Mathematics and Computer Science, University of Basel, 4051 Basel, Switzerland

\end{document}